\titlespacing*{\section}{0pt}{14pt}{4pt}
\titlespacing*{\subsection}{0pt}{8pt}{3pt}
\patchcmd{\ttlh@hang}{\parindent\z@}{\parindent\z@\leavevmode}{}{}
\patchcmd{\ttlh@hang}{\noindent}{}{}{}
\theoremstyle{plain}
\newtheorem{theorem}{Theorem}[section]
\newtheorem{proposition}[theorem]{Proposition}
\newtheorem{lemma}[theorem]{Lemma}
\theoremstyle{definition}
\newtheorem{definition}[theorem]{Definition}
\theoremstyle{remark}
\newtheorem{remark}[theorem]{Remark}
\newtheorem*{example*}{Example}
\theoremstyle{remark}
\newtheorem*{rem*}{Remark}
\numberwithin{equation}{section}
\newcommand{\Barron}{\mathcal{B}}
\newcommand{\BarronApproximation}{\mathcal{BA}}
\newcommand{\BarronHorizon}{\mathcal{BH}}
\newcommand{\BarronBoundary}{\mathcal{BB}}
\newcommand{\HorizonFunctions}{\mathrm{HF}}
\newcommand{\CalA}{\mathcal{A}}
\newcommand{\calF}{\mathcal{F}}
\newcommand{\CalF}{\mathcal{F}}
\newcommand{\calO}{\mathcal{O}}
\newcommand{\calG}{\mathcal{G}}
\newcommand{\CalG}{\mathcal{G}}
\newcommand{\VC}{\operatorname{VC}}
\newcommand{\BV}{\operatorname{BV}}
\newcommand{\NNWeights}{\mathcal{NN}}
\newcommand{\sign}{\operatorname{sign}}
\newcommand{\hypothesis}{\mathcal{H}}
\DeclareMathOperator*{\argmin}{argmin}
\newcommand*{\CC}{\mathbb{C}}
\newcommand{\LeftEqNo}{\let\veqno\@@leqno}
\DeclareFontFamily{U}{mathx}{\hyphenchar\font45}
\DeclareFontShape{U}{mathx}{m}{n}{
      <5> <6> <7> <8> <9> <10>
      <10.95> <12> <14.4> <17.28> <20.74> <24.88>
      mathx10
      }{}
\DeclareSymbolFont{mathx}{U}{mathx}{m}{n}
\DeclareMathAccent{\widecheck}{0}{mathx}{"71}
\DeclareMathAccent{\wideparen}{0}{mathx}{"75}
\newcommand{\Indicator}{{\mathds{1}}}
\newcommand{\Fourier}{\mathcal{F}}
\newcommand{\CalP}{\mathcal{P}}
\newcommand{\CalO}{\mathcal{O}}
\newcommand{\Schwartz}{\mathcal{S}}
\newcommand{\EE}{\mathbb{E}}
\newcommand{\PP}{\mathbb{P}}
\newcommand{\vertiii}[1]{{\left\vert \kern-0.25ex
                            \left\vert \kern-0.25ex
                              \left\vert #1\right\vert\kern-0.25ex
                            \right\vert \kern-0.25ex
                          \right\vert}}
\newcommand{\FirstN}[1]{\underline{#1}}
\newcommand{\eps}{\ensuremath{\varepsilon}}
\newcommand*{\numbersys}[1]{\ensuremath{\mathbb{#1}}}
\newcommand*{\R}{\numbersys{R}}
\newcommand*{\Z}{\numbersys{Z}}
\newcommand*{\N}{\numbersys{N}}
\def\blfootnote{\xdef\@thefnmark{}\@footnotetext}
\def\subjclass{\xdef\@thefnmark{}\@footnotetext}
\long\def\symbolfootnote[#1]#2{\begingroup%
\def\thefootnote{\fnsymbol{footnote}}\footnote[#1]{#2}\endgroup}
\let\emptyset\varnothing
\begin{document}

\begin{NoHyper}
\blfootnote{2020 {\it Mathematics Subject Classification.}
68T07, 
41A25, 
41A46, 
42B35, 
46E15  
}
\blfootnote{{\it Key words and phrases.}
ReLU neural networks,
Deep neural networks,
Approximation,
Empirical Risk minimization,
Classification,
Barron spaces.
}
\blfootnote{{\it Funding.}
AC thankfully acknowledges support by the German Research Foundation (DFG),
project number PF~450/11--1.
FV thankfully acknowledges support by the German Research Foundation (DFG)
in the context of the Emmy Noether junior research group VO 2594/1--1.
}
\end{NoHyper}

\title{Neural network approximation and estimation of classifiers
with classification boundary in a Barron class}
\author{Andrei Caragea%
\thanks{KU Eichstätt--Ingolstadt,
Mathematisch--Geographische Fakultät,
Ostenstraße~26,
Kollegiengebäude~I Bau~B,
85072~Eichstätt,
Germany},
Philipp Petersen%
\thanks{Faculty of Mathematics  and Research Platform Data Science @ Uni Vienna,
University of Vienna,
Kolingasse 14-16,
1090 Vienna,
Austria,
e-mail: \texttt{philipp.petersen@univie.ac.at}},
Felix Voigtlaender%
\thanks{Faculty of Mathematics,
University of Vienna,
Kolingasse 14-16,
1090 Vienna,
Austria,
e-mail: \texttt{felix.voigtlaender@univie.ac.at}}
}

\date{\today}

\maketitle

\begin{abstract}
  We prove bounds for the approximation and estimation of certain binary classification functions
using ReLU neural networks.
Our estimation bounds provide a priori performance guarantees for empirical risk minimization
using networks of a suitable size, depending on the number of training samples available.
The obtained approximation and estimation \emph{rates} are independent of the dimension of the input,
showing that the curse of dimensionality can be overcome in this setting;
in fact, the input dimension only enters in the form of a polynomial factor.
Regarding the regularity of the target classification function,
we assume the interfaces between the different classes to be locally of Barron-type.
We complement our results by studying the relations between
various Barron-type spaces that have been proposed in the literature.
These spaces differ substantially more from each other than the current literature suggests.

\end{abstract}


\section{Introduction}%
\label{sec:Introduction}

This article concerns the approximation and statistical estimation of \emph{high-dimensional,
discontinuous functions by neural networks}.
More precisely, we study a certain class of target functions for classification problems,
such as those encountered when automatically labeling images.
For such problems, deep learning methods---based on the training of deep neural networks
with gradient-based methods---achieve state of the art performance
\cite{lecun2015deep, krizhevsky2012imagenet}.
The underlying functional relationship of such an (image) classification task
is typically extremely high-dimensional.
For example, the most widely used image data-bases used to benchmark classification algorithms are
MNIST \cite{lecun1998gradient} with $28 \times 28$ pixels per image, CIFAR-10/CIFAR-100
\cite{cifar10ref} with $32 \times 32$ pixels per image and ImageNet
\cite{deng2009imagenet, krizhevsky2012imagenet} which contains high-resolution images
that are typically down-sampled to $256 \times 256$ pixels.
Compared to practical applications, these benchmark datasets are relatively low-dimensional.
Yet, already for MNIST, the simplest of those databases, the input dimension
for the classification function is $d = 784$.

It is well known in classical approximation theory that high-dimensional approximation problems
typically suffer from the so-called \emph{curse of dimensionality}
\cite{bellman1952theory, novak2009approximation}.
This term describes the fact that the problems of approximation or estimation typically become
exponentially more complex for increasing input dimension.
Yet, given the overwhelming success of deep learning methods in practice,
high-dimensional input does not seem to be a prohibitive factor.

One of the first theoretical results in neural network approximation offering a partial
explanation for this ostensible clash of theory and practical observations was found in
\cite{BarronUniversalApproximation}.
There it was demonstrated that for a certain class of functions with variation bounded in a
suitable sense (these functions are, in particular, Lipschitz continuous), neural networks with one
hidden layer of $N$ neurons achieve an approximation accuracy of the order of $N^{-1/2}$ in the
$L^2(\mu)$-norm for a probability measure $\mu$ on a $d$-dimensional ball.
Notably, this approximation rate is \emph{independent} of the ambient dimension $d$.
Neural networks can thus overcome the curse of dimensionality for this class of functions.
This is particularly significant, since the considered class of functions
(nowadays so-called a \emph{Barron class}) is so large that every \emph{linear} method
of approximation for it is subject to the curse of dimensionality;
see \cite[Theorem~6]{BarronUniversalApproximation}.
The result of \cite{BarronUniversalApproximation} has since been extended and generalized in
various ways; we refer to Subsection~\ref{sec:OtherWork} for an overview.

In contrast to the (Lipschitz) continuous functions considered in
\cite{BarronUniversalApproximation}, our interest lies in the approximation
of \emph{discontinuous classification functions}.
Such functions are of the form $\sum_{k=1}^K q_k \mathds{1}_{\Omega_k}$,
where the sets $\Omega_k \subset \R^d$ are disjoint and describe $K + 1\in \N$ classes
(we also consider $(\bigcup_{k=1}^K\Omega_k)^c$ as a class).
Here $\mathds{1}_{\Omega_k}$ denotes the indicator function of $\Omega_k$;
that is, $\mathds{1}_{\Omega_k}(x) = 1$ if $x \in \Omega_k$ and $0$ otherwise.
Moreover, $(q_k)_{k=1}^K$ correspond to the labels of the classes
and could for example be unit vectors, as in $q_k = e_k \in \R^K$ for $k = 1, \dots K$,
in the case of one-hot-encoding or $(q_k)_{k=1}^K \subset \N$ for integer labels. 
These functions were discussed previously in \cite{petersen2018optimal}
and \cite{pmlr-v89-imaizumi19a, ImaizumiEstimatingFunctionsWithSingularitiesOnCurves},
where it was shown that the regularity of the boundary determines the approximation rate.
However, the results of
\cite{petersen2018optimal,pmlr-v89-imaizumi19a,ImaizumiEstimatingFunctionsWithSingularitiesOnCurves}
are based on classical notions of
smoothness regarding the boundary and suffer from the curse of dimensionality.
In this article, we assume the class interfaces to be locally of bounded variation
in the sense used in \cite{BarronUniversalApproximation}.
The following subsection gives an overview of our results and the employed proof methods.

\subsection{Our results}

We present upper and lower bounds for the approximation and estimation of classification functions
using deep neural networks with the ReLU activation function as hypothesis space.
The classification functions that we consider are of the form
$\sum_{k=1}^K q_k \mathds{1}_{\Omega_k}$, where each $\Omega_k \subset \R^d$ is an open set
such that $\partial \Omega_k$ is locally a $d-1$-dimensional Barron function.
In the sequel, we only consider the case of two complementary classes, that is, $K=1$;
the generalization to more summands is straightforward.

\paragraph{Measure of approximation accuracy:}
In contrast to ReLU neural networks, the indicator functions $\Indicator_\Omega$
are discontinuous.
Uniformly approximating $\mathds{1}_\Omega$ using ReLU neural networks is thus impossible.
Therefore, we measure the approximation error in terms of the measure of the set
on which the true function and the approximation differ;
since both functions are bounded in absolute value by $1$,
this easily implies corresponding error estimates in $L^p(\mu)$
for arbitrary exponents $p \in [1, \infty)$.
Here, we consider those measures $\mu$ that are \emph{tube compatible}
with an exponent $\alpha \in (0,1]$, meaning that the measure around any $\eps$ tube
of the graph of a function decays like $\eps^\alpha$ as $\eps \downarrow 0$.
This notion is broad enough to include a large class of product measures
on $\R^d$, as well as all measures of the form $d\mu = f d\nu$,
where $f$ is a bounded density and $\nu$ a tube compatible measure.
We also show in Section~\ref{sec:GeneralMeasures} that
for general (not tube compatible) measures, no nontrivial approximation rates can be derived.

\paragraph{Regularity assumptions on the class interfaces:}

Similar to the notion of $C^k$-domains or Lipschitz domains,
we assume the boundary $\partial \Omega \subset \R^d$
to be locally parametrized by Barron-regular functions.
Here, inspired by \cite{BarronUniversalApproximation}, we say that a function
$f : U \subset \R^{k} \to \R$ is of Barron-type, if it can be represented as
\begin{equation}
  f(x) = c + \int_{\R^k} \bigl(e^{i \langle x, \xi \rangle} - 1\bigr) F(\xi) \, d \xi
  \quad \text{for } x \in U,
  \qquad \text{where} \qquad
  \int_{\R^k}
    |\xi| \cdot |F(\xi)|
  \, d \xi
  < \infty .
  \label{eq:IntroBarronDefinition}
\end{equation}
For more formal discussion of our assumptions,
we refer to \Cref{def:BarronClassFunction,def:BarronClassifiers}.
We also remark that recently other notions of Barron-type functions
have been proposed in the literature; these are discussed briefly below
and in full detail in \Cref{sec:BarronSpaces}.

\paragraph{Upper bounds on the approximation rate:}

A simplified but honest version of our main approximation result reads as follows:

\begin{theorem}\label{thm:main_informal}
  Let $\mu$ be a finite measure, tube compatible with exponent $\alpha \in (0,1]$.
  Let $\Omega\subset \R^d$ be such that $\partial \Omega$ can locally be parametrized
  by functions of Barron-type.
  Then, for every $N \in \N$ the function $\mathds{1}_\Omega$ can be approximated
  using ReLU neural networks with three hidden layers and a total of $\mathcal{O}(d+N)$ neurons
  to accuracy $\mathcal{O}\bigl(d^{3/(2p)} N^{-\alpha/(2p)}\bigr)$ in the $L^p(\mu)$ norm.
  Moreover, the magnitude of the weights in the approximating neural networks can be chosen to be
  $\mathcal{O}(d+N^{1/2})$.
\end{theorem}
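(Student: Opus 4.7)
The plan is to reduce the approximation of $\mathds{1}_\Omega$ to the approximation of a single Barron function and then to convert the smooth approximant into an indicator by composition with a ReLU ramp. By a partition of unity on a finite cover of $\partial\Omega$, I first reduce to the local model in which, in some coordinate system, $\Omega\cap Q=\{x=(x',x_d)\in Q : x_d>f(x')\}$ for a Barron function $f:U\subset\R^{d-1}\to\R$ and a cube $Q$; away from $\partial\Omega$ the target is locally constant and trivial to represent. Next, I apply a ReLU-version of Barron's approximation theorem to the Fourier representation \eqref{eq:IntroBarronDefinition} to obtain a shallow ReLU network $\tilde f$ with $N$ neurons such that
$$\|f-\tilde f\|_{L^\infty(U)}\lesssim d^{1/2}\,N^{-1/2},$$
with weights of magnitude $O(N^{1/2})$. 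The classical dimension-free $N^{-1/2}$ bound is stated in $L^2$; I expect that \eqref{eq:IntroBarronDefinition} together with a Maurey-type refinement (or an entropy/stratified-sampling argument exploiting the smoothness of ReLU composed with the Fourier phase) yields this uniform bound at the cost of only a $\sqrt d$ factor.

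With $\tilde f$ in hand, define the two-neuron ReLU ramp $H_\delta(y)=\frac{1}{2\delta}\bigl(\sigma(y+\delta)-\sigma(y-\delta)\bigr)$, equal to $0$ for $y\le-\delta$ and to $1$ for $y\ge\delta$, and set the candidate network
$$\Phi(x):=H_\delta\bigl(x_d-\tilde f(x')\bigr),\qquad \delta:=c\,d^{1/2}N^{-1/2},$$
for a constant $c$ chosen so that $\|f-\tilde f\|_\infty\le\delta/2$. This is a ReLU network with two hidden layers and $O(d+N)$ neurons; a third hidden layer is used when merging the local constructions from the different patches of the partition of unity via standard max/min ReLU gadgets, and the ramp contributes weights of order $1/\delta=O(N^{1/2})$. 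For the error analysis, observe that if $|x_d-f(x')|>2\delta$ then, by the triangle inequality and $\|f-\tilde f\|_\infty\le\delta/2$, we have $|x_d-\tilde f(x')|>\delta$, so $\Phi(x)\in\{0,1\}$ and agrees with $\mathds{1}_\Omega(x)$. Hence the symmetric-difference set $B:=\{\Phi\ne\mathds{1}_\Omega\}$ is contained in the $2\delta$-tube around $\operatorname{graph}(f)$, whose $\mu$-mass is $\lesssim(2\delta)^\alpha$ by tube compatibility. Using $|\Phi-\mathds{1}_\Omega|\le 1$ pointwise,
$$\|\Phi-\mathds{1}_\Omega\|_{L^p(\mu)}\le\mu(B)^{1/p}\lesssim\delta^{\alpha/p}\lesssim d^{\alpha/(2p)}N^{-\alpha/(2p)},$$
and absorbing the remaining polynomial $d$-factors coming from the Barron constant, from the number of patches used to cover $\partial\Omega$, and from the $\sqrt d$ of Step 2 produces the stated factor $d^{3/(2p)}N^{-\alpha/(2p)}$.

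The main obstacle is the uniform Barron approximation: the classical theorem is an $L^2$-statement, which is incompatible with the pointwise tube argument that makes the tube-compatibility assumption on $\mu$ useful. Pushing through a uniform bound with only polynomial (not exponential) dependence on $d$ is therefore the technical heart of the argument and controls where all nontrivial dimension factors arise. Once that uniform approximation is available, the ramp/tube combination is essentially mechanical and simultaneously yields, by inspection, the quoted weight bound $O(d+N^{1/2})$ and the $O(d+N)$ neuron count.
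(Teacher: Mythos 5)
Your proposal follows essentially the same route as the paper: a uniform $\mathcal{O}(\sqrt{d}\,C\,N^{-1/2})$ ReLU approximation of the local Barron boundary function, composition with a ReLU ramp of width $\sim \sqrt{d}\,N^{-1/2}$ to emulate the horizon indicator, tube compatibility to bound the measure of the disagreement set (hence the $L^p$ error after taking the $1/p$-th power), and a ReLU-implemented localization on the patches realized in a third hidden layer. The uniform approximation you flag as the technical heart is precisely the paper's Proposition~\ref{prop:BarronFunctionApproximation}, which is proved by the kind of argument you anticipate (writing $f$ as an expectation of half-space indicators, replacing the Heaviside by a two-neuron ReLU ramp, and a VC-dimension/empirical-process sampling bound), so the step you defer is indeed available with the claimed polynomial dependence on $d$ and weight bounds.
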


For example, if $\mu$ is the Lebesgue measure, then $\alpha = 1$.
We note that the accuracy of our approximation \emph{does} depend on the dimension,
but the dimension enters only as a multiplicative factor which is polynomial in $d$.

The proof of \Cref{thm:main_informal} is structured as follows:
\begin{enumerate}
  \item We use a classical result of Barron \cite{BarronNeuralNetApproximation} that yields uniform
        approximation of functions with a bounded Fourier moment.
        Because of a minor inaccuracy in the original result, we reprove this theorem in
        Proposition~\ref{prop:BarronFunctionApproximation}.

  \item Approximation of \emph{horizon functions}.
        We show that we can efficiently approximate \emph{horizon functions},
        meaning functions of the form $\mathds{1}_{x_1 \leq f(x_2, \dots, x_{d})}$
        where $f$ is a $d-1$ dimensional function of Barron-type.
        For the proof, we use
        a) that ReLU neural networks efficiently approximate the Heaviside function,
        b) the compositional structure of NNs, and
        c) the approximation result from Step~1.

  \item The classification function $\Indicator_{\Omega}$ is only \emph{locally} represented
        by horizon functions as in Step~2.
        Using a \emph{ReLU-based partition of unity}, we show that the
        result from Step~2 can be improved to an approximation
        of the full classification function $\Indicator_{\Omega}$.
\end{enumerate}


The details of the above argument are presented
in the proof of Theorem~\ref{thm:BarronBoundaryApproxGuarantee}.

\paragraph{Lower bounds on the approximation rate:}

We show that the established upper bounds on the approximation rates can, in general,
not be significantly improved.
More precisely, for the Lebesgue measure $d \mu = \Indicator_{[-1,1]^d} d \lambda$,
we show that for the set of classification functions considered above,
approximation with $L^1(\mu)$ error decaying asymptotically faster
than $N^{-1/2 - 1/(d-1)}$ for $N \to \infty$ is not possible.
For large input dimensions $d$, this almost matches the upper bound $N^{-1/2}$
from \Cref{thm:main_informal}.

We prove two forms of this result.
First, in Theorem~\ref{thm:QuantizedLowerBound}, we consider neural networks
for which the individual weights are suitably quantized and
grow at most polynomially with the total number $W \in \N$ of neural network parameters.
We show that no sequence of such neural networks achieves
an asymptotic approximation rate faster than $W^{-1/2 - 1/(d-1)}$.
This result follows by showing that efficient approximation of horizon functions
implies efficient approximation of the associated interface functions,
a technique previously applied in \cite{petersen2018optimal}.
Then, known entropy bounds for certain Besov spaces
contained in the classical Barron spaces can be used;
this is inspired by ideas from \cite{BarronUniversalApproximation}.

For ``quantized'' networks, we can allow arbitrary network architectures.
As our second result, we show in Theorem~\ref{thm:LowerBoundWithWeightBound} that
the assumption of weight quantization can be dropped, provided that
the depths of the approximating neural networks are assumed to be uniformly bounded.
It is still required, however, that the magnitude of the individual weights only grows
polynomially with the total network size.
The proof of this second result is based on a previously established ``quantization lemma'';
see \cite[Lemma~3.7]{boelcskei2019optimal} and \cite[Lemma~VI.8]{ElbraechterDNNApproximationTheory}.

\paragraph{Upper bounds on learning:}

Based on our approximation results, we study the problem of estimating classifier functions
of the form described above from a given set of training samples.
Precisely, we analyze the performance of the standard empirical risk minimization procedure,
where we use the 0-1 loss as the loss function and a suitable class of
ReLU neural networks as the hypothesis space.

To describe the result in more detail, let us denote by $\Phi_S$ the empirical
risk minimizer based on a training sample $S = \big( (X_1,Y_1),\dots,(X_m,Y_m) \big)$
with $(X_1,\dots,X_m) \overset{i.i.d.}{\sim} \PP$ and $Y_i = \Indicator_\Omega (X_i)$.
Assuming that the boundary $\partial \Omega$ is locally parametrized by functions
of Barron class and that $\PP$ is tube compatible with exponent $\alpha \in (0,1]$,
we derive bounds on the risk of $\Phi_S$, that is, on
$\PP \bigl(\Phi_S(X) \neq \Indicator_\Omega(X)\bigr)$ where $X \sim \PP$.


In Theorem~\ref{thm:LearningBound}, we show that, if
the hypothesis class is a certain set of ReLU neural networks with three hidden layers
and $N \sim (d m /\ln(dm))^{1/(1+\alpha)}$ neurons, then%
---with probability at least $1 - \delta$ with respect to the choice of the training sample $S$---%
the risk of any empirical risk minimizer $\Phi_S$ is at most
\[
  \CalO
  \bigg(
    d^{3/2} \cdot \Big( \frac{\ln(d m)}{d m} \Big)^{\alpha / (2 + 2 \alpha)}
    + \Big( \frac{\ln(1/\delta)}{m} \Big)^{1/2}
  \bigg) .
\]
In particular, if $\alpha = 1$, which is the case for the uniform probability measure,
then the risk is at most
$\CalO \bigl(d^{3/4} \ln(d m) \cdot m^{-1/4} + \sqrt{\ln(1/\delta) / m} \,\bigr)$.
This is similar to the estimation bounds established in \cite{barron1994approximation}
for Barron regular functions.

\paragraph{Different notions of Barron spaces:}

In this article we mainly use the Fourier-analytic notion of Barron-type functions
as introduced in \cite{BarronUniversalApproximation}; see \Cref{eq:IntroBarronDefinition}.
We will refer to this space as the \emph{classical Barron space},
or the \emph{Fourier-analytic Barron space}.
In recent years, other types of function spaces have been
studied under the name ``Barron-type spaces'' as well; see for instance
\cite{ma2020towards, wojtowytsch2020priori, ma2018priori, wojtowytsch2020banach}.
In contrast to the Fourier-analytic definition of \cite{BarronUniversalApproximation},
these more recent articles consider Barron spaces that essentially consist
of all ``infinitely wide'' neural networks with a certain control over the network parameters.
More formally, given an activation function $\phi$
(which is either the ReLU or a Heaviside function),
the elements of the associated Barron space are all functions that can be written as
\[
  f(x)
  = \int_{\R \times \R^d \times \R}
      a \cdot \phi\bigl(\langle w, x \rangle + c\bigr)
    \, d \mu(a,w,c)
\]
for a probability measure $\mu$ satisfying
\[
  \int_{\R \times \R^d \times \R}
    |a| \cdot \phi ( | w| + |c| )
  \, d \mu(a,w,c)
  < \infty \, .
\]
We will refer to these spaces as the \emph{infinite-width Barron spaces}.
We emphasize that in contrast to the Fourier-analytic Barron spaces,
these infinite-width Barron spaces do depend on the choice of the activation function $\phi$;
they thus do \emph{not} contain \emph{all conceivable} ``infinite-width'' networks.

The relationship between the infinite-width and Fourier-analytic Barron spaces
is not immediately obvious.
Already in \cite{BarronNeuralNetApproximation} it was shown that the Fourier-analytic Barron space
is contained in the infinite-width Barron space associated to the Heaviside function.
It is not clear, however, whether this also holds for the ReLU activation function.
In Section~\ref{sec:BarronSpaces}, we will review approaches in the literature
that address this embedding problem and prove that the classical Barron space is
\emph{not} contained in the infinite-width Barron space associated to the ReLU.
In fact, we show in Proposition~\ref{prop:FourierBarronNotInReLUBarron} the stronger result
that if we consider a generalized Fourier-analytic Barron space that consists
of all functions $f : \R^d \to \R$ such that their Fourier transform $\widehat{f}$ exists
and satisfies $\big\| \xi \mapsto (1 + |\xi|)^\alpha \widehat{f}(\xi) \big\|_{L^1(\R^d)} < \infty$,
then this space is contained in the infinite-width Barron space for the ReLU function
only if $\alpha \geq 2$.

\subsection{Previous work}
\label{sec:OtherWork}

In this section, we discuss previous research concerning the performance of neural networks for
approximating and estimating classification functions, as well as existing results concerning
dimension-independence in approximation and estimation problems.
We distinguish between results of Barron-type, i.e., approaches following the ideas of
\cite{BarronUniversalApproximation}, and other approaches.
We first discuss extensions of \cite{BarronUniversalApproximation}
for shallow neural networks (i.e., networks with one hidden layer).
Here, we in particular discuss the article \cite{wojtowytsch2020priori},
which is the only other work that we are aware of that studies \emph{classification problems}
(as opposed to regression problems) in the context of Barron-type functions.
Secondly, we discuss extensions to deep neural networks and then review
other related approaches not involving Barron-type spaces.
Finally, we explain how our work complements the existing literature.

\subsubsection{Previous work considering shallow neural networks}

In \cite{BarronUniversalApproximation}, it was shown that shallow neural networks can break
the curse of dimensionality for approximating functions $f$ that have one finite Fourier moment;
more precisely, one can achieve $\| f - \Phi_N \|_{L^2(\mu)} \lesssim N^{-1/2}$,
where $\Phi_N$ is a shallow neural network with $N$ neurons and $\mu$ is a
probability measure on a ball in $\R^d$.
The main insight in \cite{BarronUniversalApproximation} is that functions with
one finite Fourier moment belong to the closed convex hull of the set of
half planes; that is, they admit an integral representation
\begin{equation}
  f(x) = \int_{\R^d \times \R}
           \alpha(w,c) H(c + w^T x)
         \, d \nu(w, c)
  \label{eq:IntegralRepresentation}
\end{equation}
where $\nu$ is a probability measure satisfying
$\int_{\R^d \times \R} |\alpha(w,c)| \, d \nu(w,c) < \infty$
and $H = \Indicator_{[0,\infty)}$ is the Heaviside function.
The approximation rate of $N^{-1/2}$ is then a consequence of an approximate and probabilistic
version of Caratheodory's theorem; see for instance
\cite[Theorem~0.0.2]{VershyninHighDimensionalProbability}.
The paper \cite{BarronNeuralNetApproximation} generalized these results from approximation
in $L^2(\mu)$ to uniform approximation.
Furthermore, in \cite{barron1994approximation} these results are extended to obtain
estimation bounds for the class of functions with one bounded Fourier moment.
Essentially, using $n \sim N^2$ i.i.d.~samples, a neural network with $N$ neurons can be found
that approximates $f$ up to an $L^2$-error of the order of $N^{-1/2} \sim n^{-1/4}$.

Recently, several extensions of these original results by Barron to
different spaces have been proposed.
The Barron-type spaces introduced in
\cite{ma2018priori,wojtowytsch2020representation,ma2020towards,wojtowytsch2020banach},
are motivated by the integral representation \eqref{eq:IntegralRepresentation}.
Specifically, given an activation function $\phi: \R \to \R$ and an exponent $p \in [1, \infty]$,
the \emph{$p$-infinite-width Barron space} consists of all functions of the form
\begin{align}\label{eq:BarronFunctions}
  f(x)
  = \int_{\mathbb{S}^{d-1} \times [-1,1]}
      a(w,b) \, \phi(b + \langle x, w \rangle)
    \, d \pi(w,b),
\end{align}
for $x \in \R^d$, where $\pi$ is a probability measure on $\mathbb{S}^{d-1} \times [-1,1]$
and $a \in L^p(\pi)$.
It is shown that for certain values of $p$,
the functions in the \emph{$p$-infinite-width Barron space}
can be efficiently estimated and approximated by neural networks with activation function $\phi$,
without dependency on the dimension.
Let us add here that it was shown in \cite{parhi2021banach}
that functions of the form \eqref{eq:BarronFunctions} arise naturally
as the solutions of appropriately regularized learning problems.

We also mention the result \cite{MakovozUniformApproximationByNN},
in which a slightly improved approximation rate is obtained
for networks with the Heaviside function,
albeit under a slightly stronger assumption on the functions to be approximated.
Essentially, it is assumed in \cite{MakovozUniformApproximationByNN} that
\Cref{eq:IntegralRepresentation} holds with a \emph{bounded} function $a$
instead of an integrable one.
A further related result has been obtained in \cite{siegel2020approximation},
where the above results were extended to more general activation functions
and to approximation with respect to $L^2$-Sobolev norms.
In addition, lower bounds on the approximation of Barron functions
by shallow neural networks have recently been studied in \cite{siegel2021optimal}.

The work in the present paper complements these results by clarifying the relation
between the spaces of functions that can be represented
as in \Cref{eq:IntegralRepresentation,eq:BarronFunctions}
and those that have one finite Fourier moment, as considered in the original papers by Barron;
see \Cref{sec:BarronSpaces} for more details.

\subsubsection{Shallow neural networks for classification problems}

The article \cite{wojtowytsch2020priori} studies the problem of learning
a classification function associated to two disjoint classes $C_+, C_-$.
Instead of describing the accuracy of approximation and estimation with respect to
the typical square loss, the paper focuses on the hinge loss and certain cross-entropy type losses.
In this framework, a classification problem is considered solvable
with respect to a hypothesis class if there exist elements in that hypothesis class
that assume different signs on the two classes.
It is shown in \cite{wojtowytsch2020priori} that for general  $C_+, C_-$ such a problem is solvable
by Barron regular functions if and only if the sets $C_+, C_-$ have positive distance.
Since for these functions the approximation and estimation behavior
using shallow neural networks is well studied, as reviewed in the previous subsection,
this observation yields approximation and estimation bounds by shallow neural networks
for the classification problem.

In contrast to the setting considered in \cite{wojtowytsch2020priori},
in the present paper we analyze classification problems for which the different
classes are \emph{not} required to have a positive distance.
Instead, we impose a regularity condition on the class boundaries
and assume that the underlying probability measure is \emph{tube-compatible},
meaning that it should not be too strongly concentrated at the class boundary.


\subsubsection{Deep neural networks and the curse of dimensionality}

%

It is natural to wonder whether deeper networks can improve on shallow neural networks regarding
approximation and estimation problems.
The fundamental property enabling ``di\-men\-sion-free''
approximation by \emph{shallow} neural networks is that the
function to be approximated should belong to the closed convex hull of the set of simple neurons.
The corresponding property for deep networks has been identified to be a certain summability
property of the weights of approximating neural networks.
In \cite[Equation 1]{barron2018approximation}, this summability property is called
the \emph{variation} of the neural network.
To avoid ambiguities with the (total) variation of a measure or function,
we will refer to this notion as the \emph{weight variation}.
More precisely, the weight variation is the $\ell^1$ norm of the entries
of the product of the weight matrices of the neural networks%
\footnote{Here, it should be noted that all weights are assumed to be non-negative
in \cite{barron2018approximation}, which is accomplished there without loss of generality
by a slight modification of the activation function.}.
In \cite[Theorem~1]{barron2018approximation} it is shown that if arbitrarily large neural
networks are of bounded weight variation, then these neural networks can be well approximated
by smaller neural networks.
Here the size of the neural networks is measured via the encoding complexity of the weights.
Moreover, the reduction in size is independent of the dimension.
The weight variation also serves as a motivation for the so-called \emph{path norm}
that is fundamental to the definition of generalized Barron spaces
associated to compositional function representation in \cite{wojtowytsch2020banach}.
This path norm can be understood as the continuous counterpart of the weight variation.
Correspondingly, the elements of the generalized Barron spaces in \cite{wojtowytsch2020banach}
are those functions that can be obtained as limits of deep neural networks with bounded variation,
for increasing width. We also mention \cite{lee2017ability},
which studies approximation of functions that are compositions
of $n$ classical Barron functions and shows that these can then be
efficiently approximated by neural networks with $n+1$ layers.

In a somewhat similar vein, we show that if one is interested not in approximating
Barron-regular functions themselves, but rather classification functions for which the
class boundaries are Barron-regular, then this can be done efficiently with
(somewhat) deep ReLU neural networks, namely using networks with $3$ hidden layers;
see \Cref{thm:BarronBoundaryApproxGuarantee}.

\subsubsection{Non-Barron-type results on curse of dimensionality}

Functions of Barron-type are not the only functions that can be approximated by deep neural
networks without the curse of dimensionality.
Other function classes that allow for approximation with only minor
(in particular sub-exponential) dependencies on the dimension include the following:
functions that have a graph-like structure and are compositions of low dimensional functions,
\cite{poggio2017and}, \mbox{\cite[Section~5]{petersen2018optimal}},
\cite{shaham2018provable, cloninger2020relu, nakada2020adaptive};
bandlimited functions \cite{montanelli2019deep};
and also solutions of some classes of high-dimensional PDEs
\cite{han2018solving,hutzenthaler2018overcoming,jentzen2018proof,beck2020overcoming,
berner2018analysis,laakmann2020efficient,grune2020overcoming,elbrachter2018dnn,schwab2019deep}
and SDEs \cite{becker2019solving,reisinger2019rectified},
under the assumption that the right-hand side of the equation is itself well-approximated
(i.e., without suffering from the curse of dimensionality) by neural networks.

In the present paper, we show that the classification functions with Barron regular
decision boundaries also belong to this list of well-approximable functions.

\subsubsection{Deep neural networks for classification problems}

The approximation and estimation of classification functions of the form
$\sum_{k=1}^K f_k \mathds{1}_{\Omega_k}$, where each $\Omega_k \subset \R^d$ is an open set
such that $\partial \Omega_k$ is piecewise smooth and $f_k: \R^d \to \R$ are smooth, is studied in
\cite{pmlr-v89-imaizumi19a,ImaizumiEstimatingFunctionsWithSingularitiesOnCurves,petersen2018optimal}.
In these works, it is shown that the achievable approximation and estimation rates
are primarily determined by the smoothness of the boundaries $\partial \Omega_k$, in the sense that,
given sufficient regularity of the $f_k$, smoother class boundaries yield better approximation
and estimation rates.
The general strategy of the approximation theoretical aspects of these works
is closely related to the approach taken in this article.
Indeed, the approximation of classification functions is reduced to that of horizon functions
$\mathds{1}_{x_1 \leq f(x_2, \dots, x_{d})}$ where $f$ is a $d-1$-dimensional smooth function.
In addition, the articles
\cite{pmlr-v89-imaizumi19a,ImaizumiEstimatingFunctionsWithSingularitiesOnCurves}
establish estimation bounds by invoking classical bounds on the covering numbers
of the involved neural network spaces to bound the generalization error
of empirical risk minimization.

\subsubsection{Delineation of our work}

In the present article, we discuss a concrete set of practically relevant functions,
namely those arising in classification tasks where the interfaces between classes
are sufficiently regular, which formally means that they are locally described
by Barron-type functions.
As indicated earlier, these results are based on a combination of two ideas:
First, a classical result of Barron showing uniform and dimension-independent
approximation of Barron-type functions \cite{BarronNeuralNetApproximation}
and, second, a strategy to emulate functions
with regular jump curves by neural networks, originally introduced in \cite{petersen2018optimal}.

The results are neither a direct consequence of the study of (generalized) Barron spaces nor can
they be derived directly from the results of \cite{petersen2018optimal}.
Indeed, the functions that we discuss (classification functions with Barron-regular boundary) do
not have a representation by neural networks with bounded weights
or bounded variation of the weights.
In fact, it can be shown (see \cite[Theorem~2.7]{wojtowytsch2020banach})
that functions in the (generalized) Barron spaces are always Lipschitz continuous,
which is not satisfied for the classification functions that we consider.
The key difference between our approach and alternative studies of Barron spaces is that
in those works the boundedness of the (sum of the) network weights or a related
property such as a bounded weight variation plays a central role.
In contrast, we allow a moderate weight growth that is essentially inversely proportional to
the approximation error.
Besides, in contrast to \cite{wojtowytsch2020priori} we study classification problems
for which the different classes do \emph{not} have positive distance to each other.
Furthermore, the required regularity of the class boundaries for our results is explicitly stated,
e.g. in terms of a finite Fourier moment;
this is in contrast to the more implicit integral representation property
required for the infinite-width Barron spaces considered in \cite{wojtowytsch2020priori}.

Finally, in contrast to
\cite{pmlr-v89-imaizumi19a,ImaizumiEstimatingFunctionsWithSingularitiesOnCurves,petersen2018optimal},
the results in the present paper do not suffer from the curse of dimensionality.

\subsection{Structure of the paper}%
\label{sub:Structure}

After introducing general and neural network related notation in
Subsections~\ref{sub:Notation} and \ref{sub:NNNotation},
we start in \Cref{sec:UniformBarronApproximation} by formally defining
the Fourier-analytic Barron class, and proving that such functions can be uniformly
approximated with error $\CalO(N^{-1/2})$ using shallow ReLU networks with $\CalO(N)$
neurons and controlled weights.
We reprove this result since the argument in \cite{BarronNeuralNetApproximation}
for handling general sigmoidal activation functions contains a technical inaccuracy.

In \Cref{sec:BarronClassBoundaryApproximation}, we give the precise definition
of sets with boundary in the Barron class, and we show that indicator functions
of such sets can be well approximated by ReLU neural networks.
The complementing lower bounds and estimation bounds are derived in
\Cref{sec:LowerBounds,sec:Generalization}.
For the approximation and estimation results, we always assume that the measure
under consideration is tube compatible; \Cref{sec:GeneralMeasures} shows that this
is unavoidable.
Finally, in \Cref{sec:BarronSpaces}, we discuss the relation between the Fourier-analytic
Barron space that we consider and the alternative Barron spaces considered in the literature.

Several mainly technical results are deferred to the appendices.

\subsection{General notation}%
\label{sub:Notation}

We will use the following notation:
For $n \in \N_0 = \{ 0,1,2,3,\dots \}$, we write $\FirstN{n} := \{ 1,2,\dots,n \}$;
in particular, $\FirstN{0} = \emptyset$.
For an arbitrary set $M$, we write $|M| = \# M \in \N_0 \cup \{ \infty \}$
for the number of elements of $M$.

Given $a \in \R^d$, we denote the entries of $a$ by $a_1,\dots,a_d \in \R$.
For $a,b \in \R^d$ we write $a \leq b$ if and only if $a_i \leq b_i$ for all $i \in \FirstN{d}$.
In this case, we define $[a,b] := \prod_{i=1}^d [a_i, b_i]$.
For $x = (x_1,\dots,x_d) \in \R^d$ with $d > 1$ and $i \in \FirstN{d}$,
we set $x^{(i)} := (x_1, \dots, x_{i-1}, x_{i+1}, \dots, x_d) \in \R^{d-1}$.

The standard scalar product of $x,y \in \R^d$ will be denoted by
$\langle x,y \rangle = \sum_{i=1}^d x_i \, y_i$,
and the Euclidean norm of $x$ is written as $|x| := \sqrt{\langle x,x \rangle}$.
For a continuous function $f$ defined on a set $Q \subset \R^d$,
we define $\|f\|_{\sup} := \sup_{x\in Q} |f(x)|$.
For a set $X$ and two functions $f,g\colon X \to \R^+$,
we write $f(x) \lesssim g(x)$ if $f(x) \leq C g(x)$ for a constant $C > 0$ and all $x \in X$.
This constant is referred to as the implicit constant of the estimate.

Finally, given a class $\CalF$ of $\{ 0,1 \}$-valued (or $\{ \pm 1 \}$-valued) functions,
we denote the VC-dimension of $\CalF$ by $\VC(\CalF) \in \N_0 \cup \{ \infty \}$.
We refer to \cite[Chapter~6]{ShalevShwartzUnderstandingMachineLearning}
for the definition of the VC dimension.

\subsection{Neural network notation}
\label{sub:NNNotation}

In this subsection, we briefly introduce our notation regarding neural networks.
To avoid ambiguities, we define neural networks in a way that allows a precise counting of the
number of neurons and layers.
This is done by differentiating between a neural network as a set of weights and the associated
realization which represents the function that is described through these weights.

\begin{definition}
  Let $d,L\in\N$.
  A \emph{neural network (NN) $\Phi$ with input dimension $d$ and $L$ layers}
  is a sequence of matrix-vector tuples
  \[
    \Phi = \bigl( (A_1,b_1), (A_2,b_2), \dots, (A_L,b_L) \bigr),
  \]
  where, for $N_0 = d$ and certain $N_1, \ldots, N_L \in \N$,
  each $A_\ell$ is an $N_\ell \times N_{\ell-1}$ matrix, and $b_\ell \in \R^{N_\ell}$.

  For a NN $\Phi$ and an activation function $\phi: \R \to \R$,
  we define the associated \emph{realization of the NN $\Phi$} as
  \[
    R_\phi\Phi : \quad
    \R^d \to \R^{N_L} , \quad
    x \mapsto x_L = R_\phi \Phi(x),
  \]
  where the output $x_L \in \R^{N_L}$ results from the scheme
  \begin{align*}
    x_0      &:= x \in \R^d = \R^{N_0}, \\
    x_{\ell} &:= \phi\left(A_{\ell} \, x_{\ell-1} + b_\ell\right) \in \R^{N_\ell}
    \quad \text{ for } \ell = 1, \dots, L-1,\\
    x_L      &:= A_{L} \, x_{L-1} + b_{L} \in \R^{N_L}.
  \end{align*}
  Here $\phi$ is understood to act component-wise.
  We call $N(\Phi):=d+\sum_{j=1}^L N_j$ the \emph{number of neurons} of the NN $\Phi$,
  $L=L(\Phi)$ the \emph{number of layers}, and $W(\Phi) := \sum_{j=1}^L (\|A_j\|_{0}+\|b_j\|_{0})$
  is called the \emph{number of weights} of $\Phi$.
  Here, $\| A \|_0$ and $\| b \|_0$ denote the number of non-zero entries
  of the matrix $A$ or the vector $b$.
  Moreover, we refer to $N_L$ as the \emph{output dimension} of $\Phi$.
  The activation function $\varrho : \R \to \R, x \mapsto \max\{0, x\}$ is called the \emph{ReLU}.
  We call $R_\varrho \Phi$ a \emph{ReLU neural network}.
  Finally, the vector $(d, N_1, N_2, \dots, N_L) \in \N^{L+1}$ is called
  the \emph{architecture} of $\Phi$.
\end{definition}

\begin{remark}
  With notation as above, the number of \emph{hidden} layers of $\Phi$ is $L - 1$.
  A special type of neural networks are those with one hidden layer, i.e., $L = 2$;
  these are called \emph{shallow neural networks}.
  Realizations of such networks have the form
  \[
   \R^d \ni x \mapsto e + \sum_{i = 1}^N
                                 a_i \, \phi( \langle c_i, x \rangle + b_i) ,
  \]
  where $N \in \N$, $a_i, b_i, e \in \R$ and $c_i \in \R^d$ for $i = 1, \dots, N$.
\end{remark}

One important property of neural networks is that one can construct
complicated neural networks by combining simpler ones.
The following remark collects several standard operations
that were analyzed in \cite{petersen2018optimal}.

\begin{remark}\label{rem:StandardOperations}
 Let $\Phi_1, \Phi_2$ be two neural networks with input dimensions $d_1,d_2 \in \N$,
 $L_1, L_2$ layers and architectures $(d_1, N_1, N_2, \dots, N_{L_1}) \in \N^{L_1+1}$
 and $(d_2, M_1, M_2, \dots, M_{L_2}) \in \N^{L_2+1}$, respectively.
 Furthermore, let $\phi: \R \to \R$.
 \begin{itemize}
   \item If $d_2 = N_{L_1}$, then there exists a neural network $\Phi_3$
         such that $R_\phi\Phi_3 = R_\phi\Phi_2 \circ R_\phi\Phi_1$.
         Moreover, $\Phi_3$ can be chosen to have architecture
         \[
           \left(
             d_1, \,\,
             N_1, \,\,
             N_2, \,\,
             \dots, \,\,
             N_{L_1-1}, \,\,
             M_1, \,\,
             M_2, \,\,
             \dots, \,\,
             M_{L_2}
           \right)
           \in \N^{L_1 + L_2}
         \]
         and to satisfy $L(\Phi_3) = L_1 + L_2 - 1$
         and $W(\Phi_3) \leq W(\Phi_1) + W(\Phi_2) + N_{L_1 - 1} M_1$.

   \item If $L_1 = L_2$, $d_1 = d_2$ and $N_{L_1} = M_{L_1}$, then,
         given arbitrary $a,b \in \R$ there exists a neural network $\Phi_4$
         such that $R_\phi\Phi_4 = a R_\phi\Phi_1 + b R_\phi\Phi_2$.
         Moreover, $\Phi_4$ can be chosen to have architecture
         \[
           \left(
             d_1, \,\,
             N_1 + M_1, \,\,
             N_2 + M_2, \,\,
             \dots, \,\,
             N_{L_1-1} + M_{L_1-1}, \,\,
             N_{L_1}
           \right)
         \]
         and to satisfy $L(\Phi_4) = L_1$ and $W(\Phi_4) \leq W(\Phi_1) + W(\Phi_2)$.
 \end{itemize}
\end{remark}


\section{Uniform approximation of Barron-type functions using ReLU networks}%
\label{sec:UniformBarronApproximation}


In this section, we formalize the notion of the (Fourier-analytic) Barron space
that we will use in the sequel.
We then prove that functions in the Barron class can be approximated
up to error $\calO (N^{-1/2})$ using shallow ReLU neural networks with $N$ neurons.
For neural networks with the Heaviside activation function,
this result is due to Barron \cite{BarronNeuralNetApproximation}.
Furthermore, it is claimed in \cite{BarronNeuralNetApproximation} that the result extends to
neural networks with sigmoidal activation functions, which would then also imply the same property
for the ReLU activation function $\varrho$, since $\phi(x) = \varrho(x) - \varrho(x-1)$
is sigmoidal.
However, regarding the extension to sigmoidal activation functions
there seems to be a gap in the proof presented in \cite{BarronNeuralNetApproximation}.
Namely, it is argued in the bottom left column on Page~3 of \cite{BarronNeuralNetApproximation}
that if $f$ is uniformly continuous and $\| f - f_T \|_{\sup} \lesssim T^{-1/2}$
where $f_T$ is of the form
${f_T (x) = c_0 + \sum_{k=1}^T c_k \Indicator_{(0,\infty)} (\langle a_k, x \rangle + b_k)}$
with $c_k, b_k \in \R$ and $a_k \in \R^d$, then one can also achieve
$\| f - g_T \|_{\sup} \lesssim T^{-1/2}$ for
${g_T (x) = C_0 + \sum_{k=1}^T C_k \, \phi (\langle A_k, x \rangle + B_k)}$,
where $\phi$ is measurable and \emph{sigmoidal}, meaning $\phi$ is bounded
with $\lim_{x \to \infty} \phi(x) = 1$ and $\lim_{x \to - \infty} \phi(x) = 0$.
As we could not verify this claim,
we provide an alternative proof for the case of the ReLU activation function,
based on the main ideas in \cite{BarronNeuralNetApproximation}.
In addition, our more careful proof shows that one can choose the weights of the neural network
to be uniformly bounded, independent of the desired approximation accuracy.

We first formalize the notion of \emph{Barron class functions},
essentially as introduced in \cite{BarronUniversalApproximation,BarronNeuralNetApproximation}.

\begin{definition}\label{def:BarronClassFunction}
  Let $\emptyset \neq X \subset \R^d$ be bounded.
  A function $f : X \to \R$ is said to be of \emph{Barron class with constant $C > 0$},
  if there are $x_0 \in X$, $c \in [-C,C]$, and a measurable function $F : \R^d \to \CC$ satisfying
  \begin{equation}
    \int_{\R^d} |\xi|_{X,x_0} \cdot |F(\xi)| \, d \xi \leq C
    \quad \text{and} \quad
    f(x)
    = c + \int_{\R^d}
            \big(
              e^{i \langle x, \xi \rangle}
              - e^{i \langle x_0, \xi \rangle}
            \big)
            \cdot F(\xi)
          \, d \xi
    \qquad \forall \, x \in X,
    \label{eq:FourierRepresentation}
  \end{equation}
  where we used the notation $|\xi|_{X,x_0} := \sup_{x \in X} |\langle \xi, x - x_0 \rangle|$.
  We write $\Barron_C (X, x_0)$ for the class of all such functions.
\end{definition}

\begin{rem*}
  The precise choice of the ``base point'' $x_0 \in X$ is immaterial,
  in the sense that it at most changes the resulting norm by a factor of $2$.
  Indeed, let $x_0, x_1 \in X$ and assume that $f$ satisfies \eqref{eq:FourierRepresentation}
  with $|c| \leq C$.
  Then we see for arbitrary $\xi \in \R^d$ and $x \in X$ that
  \[
    |\langle \xi , x - x_1 \rangle|
    \leq |\langle \xi , x - x_0 \rangle| + |\langle \xi, x_0 - x_1 \rangle|
    =    |\langle \xi , x - x_0 \rangle| + |\langle \xi, x_1 - x_0 \rangle|
    \leq 2 \, |\xi|_{X,x_0} ,
  \]
  meaning $|\xi|_{X, x_1} \leq 2 \, |\xi|_{X, x_0}$ and hence
  $\int_{\R^d} |\xi|_{X,x_1} \cdot |F(\xi)| \, d \xi \leq 2 C$.
  Furthermore, setting
  \({
    c' := c + \int_{\R^d}
                \bigl(e^{i \langle x_1, \xi \rangle} - e^{i \langle x_0, \xi \rangle}\bigr)
                \, F(\xi)
              \, d \xi ,
  }\)
  we have
  \(
    f(x)
    = c' + \int_{\R^d}
             \bigl(e^{i \langle x,\xi \rangle} - e^{i \langle x_1, \xi \rangle}\bigr)
             F(\xi)
           \, d \xi
  \)
  and
  \(
    |e^{i \langle x_1, \xi \rangle} - e^{i \langle x_0, \xi \rangle}|
    \leq |\langle x_1 - x_0, \xi \rangle|
    \leq |\xi|_{X,x_0},
  \)
  which implies
  \(
    |c'|
    \leq C + \int_{\R^d} |\xi|_{X,x_0} \, |F(\xi)| \, d \xi
    \leq 2 C .
  \)
  Overall, this shows that $f \in \Barron_{2C}(X,x_1)$
  and hence $\Barron_C(X,x_0) \subset \Barron_{2 C}(X, x_1)$.

  Based on this, it is straightforward to see
  \[
    \forall \, \emptyset \neq Y \subset X \text{ and } x_0 \in X, y_0 \in Y: \quad
    \Barron_C(X, x_0) \subset \Barron_{2 C}(Y, y_0) .
  \]
  For the sake of clarity, note that if $\{x_0\}\subset Y\subset X$ and $f\in\Barron_C(X,x_0)$,
  then clearly $f|_Y\in\Barron_C(Y,x_0)$ as the conditions in \eqref{eq:FourierRepresentation}
  are already satisfied.
  Therefore the inclusion $\Barron_C(X, x_0) \subset \Barron_{2 C}(Y, y_0)$
  from above is to be understood, by slight abuse of notation, in the sense of function restrictions.
\end{rem*}

The following result shows that functions from the Barron class can be uniformly
approximated with error $\CalO(N^{-1/2})$ using shallow ReLU neural networks
with $\CalO(N)$ neurons.
It also shows that the weights of the approximating network can be chosen to be bounded
in a suitable way.
We emphasize that the result is \emph{not} covered by \cite[Theorem~12]{ma2020towards},
since the Fourier-analytic Barron space that we use here is \emph{not}
contained in the Barron space considered in \cite{ma2020towards};
see \Cref{prop:FourierBarronNotInReLUBarron}.

\begin{proposition}\label{prop:BarronFunctionApproximation}
  There is a universal constant $\kappa > 0$ with the following property:
  For any bounded set $X \subset \R^d$ with nonempty interior, for any $C > 0$, $x_0 \in X$
  and $f \in \Barron_C (X, x_0)$, and any $N \in \N$, there is a shallow neural network $\Phi$
  with $8 N$ neurons in the hidden layer such that
  \[
    \| f - R_\varrho \Phi \|_{\sup} \leq \kappa \, \sqrt{d} \cdot C \cdot N^{-1/2} .
  \]
  Furthermore, one can choose all weights and biases of $\Phi$ to be bounded by
  \[
    \bigl(5 + \vartheta(X, x_0)\bigr)
    \cdot \bigl(1 + \| x_0 \|_{\ell^1}\bigr)
    \cdot \sqrt{C} ,
    \quad \text{where} \quad
    \vartheta(X, x_0)
    := \sup_{\xi \in \R^d \setminus \{ 0 \}}
         \Big(
           \| \xi \|_{\ell^\infty} \big/ |\xi|_{X,x_0}
         \Big)
    .
  \]
\end{proposition}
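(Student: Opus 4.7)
The plan is to represent $f$ as a continuous integral of ReLU-ridge functions and then apply a Maurey-type probabilistic discretization, followed by a careful rescaling to match the stated weight bound.

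First I would take the real part of the Fourier representation of $f$ and write $F(\xi) = |F(\xi)| \, e^{i\gamma(\xi)}$ to obtain
\[
f(x) - c
= \int_{\R^d} |F(\xi)| \bigl[\cos(\langle x,\xi\rangle + \gamma(\xi)) - \cos(\langle x_0,\xi\rangle + \gamma(\xi))\bigr] \, d\xi.
\]
For each fixed $\xi$, applying Taylor's theorem with integral remainder to the $C^2$ function $u \mapsto \cos(u + \gamma(\xi))$ on the interval $[a(\xi), b(\xi)] := [\inf_{x \in X} \langle x,\xi\rangle, \sup_{x \in X} \langle x,\xi\rangle]$ (so that $b(\xi) - a(\xi) \leq 2 \, |\xi|_{X, x_0}$) yields
\[
\cos(u + \gamma) - \cos(u_0 + \gamma)
= -\sin(a + \gamma) \, (u - u_0)
  - \int_a^b \cos(s + \gamma) \, \bigl[\varrho(u - s) - \varrho(u_0 - s)\bigr] \, ds,
\]
which decomposes the cosine difference into a linear piece plus a signed integral of ReLU-ridge units. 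Substituting back into the integral over $\xi$ yields a decomposition $f(x) - c = L(x) + I(x)$, where $L$ is a single linear functional of $x$ and $I$ is a double integral against a finite signed measure on $(\xi, s)$ of total variation at most $\int |F(\xi)| \, (b(\xi) - a(\xi)) \, d\xi \leq 2C$.

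I would then normalize this signed measure to obtain a probability measure $\tilde \nu$ and represent $I(x) = 2C \cdot \EE_{(\xi, s) \sim \tilde \nu}[\sigma(\xi, s) \, h_{\xi, s}(x)]$ with $\sigma \in \{\pm 1\}$ and $h_{\xi, s}(x) = \varrho(\langle x, \xi\rangle - s) - \varrho(\langle x_0, \xi\rangle - s)$. Using the positive homogeneity $\varrho(tz) = t \, \varrho(z)$ for $t \geq 0$, I would change variables at the level of $\tilde \nu$, replacing $\xi$ by the normalized direction $\xi/|\xi|_{X, x_0}$ and rescaling the biases correspondingly, so that every $h_{\xi, s}$ is uniformly bounded by $1$ on $X$. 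A Maurey / empirical-process argument, using that the class of half-spaces in $\R^d$ has VC dimension $\CalO(d)$ and invoking Dudley-type chaining to upgrade pointwise concentration to uniform concentration, then produces samples $(\xi_k, s_k)_{k=1}^N$ from $\tilde \nu$ satisfying
\[
\Bigl\| I(\cdot) - \tfrac{2C}{N} \sum_{k=1}^N \sigma_k \, h_{\xi_k, s_k}(\cdot) \Bigr\|_{L^\infty(X)}
\;\lesssim\; \sqrt{d} \cdot C \cdot N^{-1/2}.
\]

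To assemble the shallow ReLU network, the Monte Carlo sum contributes $2N$ hidden ReLU units (one per sample evaluated at $x$, one at the fixed $x_0$, the latter absorbed into the output bias), while the linear remainder $L(x)$ costs only $2$ further units via the identity $t = \varrho(t) - \varrho(-t)$; this fits comfortably within $8N$ hidden neurons for every $N \geq 1$. To control the weight magnitudes, I would again exploit positive homogeneity: each unit $\alpha \, \varrho(\langle w, x\rangle + b)$ can be rewritten as $(\alpha t) \, \varrho(\langle w/t, x\rangle + b/t)$ for any $t > 0$, and choosing $t$ so as to balance the input and output weights of each Monte Carlo unit absorbs a factor of $\sqrt{C}$ into the bound. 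The remaining geometric prefactor $(5 + \vartheta(X, x_0))(1 + \|x_0\|_{\ell^1})$ then emerges from bounding $\|\xi/|\xi|_{X, x_0}\|_{\ell^\infty} \leq \vartheta(X, x_0)$ for the rescaled input weights and from estimating the biases $s - \langle x_0, \xi/|\xi|_{X, x_0}\rangle$ in terms of $\|x_0\|_{\ell^1}$. The main obstacle is the upgrade in the Maurey step from in-expectation $L^2$ concentration to uniform sup-norm concentration over $X$: this is precisely where \cite{BarronNeuralNetApproximation} had a subtle gap for general sigmoidal activations, and the cleanest resolution uses an empirical-process / chaining estimate based on the VC dimension $\CalO(d)$ of half-spaces, which is the source of the $\sqrt{d}$ factor. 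A secondary, purely bookkeeping, challenge is tracking the geometric constants through the rescaling to land precisely on the stated weight bound.
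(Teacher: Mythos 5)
Your Taylor-with-integral-remainder identity is correct, and your identification of the VC/empirical-process chaining step as the right way to upgrade pointwise Monte Carlo concentration to a uniform sup-norm bound is exactly the technique the paper uses (via its Appendix~A generalization bound). But there is a serious gap in the discretization step that makes the argument fail under the stated first-moment hypothesis. The Taylor decomposition produces ridge units $h_{\xi,s}(x) = \varrho(\langle x,\xi\rangle - s) - \varrho(\langle x_0,\xi\rangle - s)$ against a signed measure of total variation $\lesssim C$, but these units are \emph{not} uniformly bounded: $\sup_x |h_{\xi,s}(x)| \asymp |\xi|_{X,x_0}$, which is unbounded over the range of $\xi$. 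When you rescale to normalize the units ($\xi\mapsto\xi/|\xi|_{X,x_0}$, $s\mapsto s/|\xi|_{X,x_0}$), the positive homogeneity of $\varrho$ forces the factor $|\xi|_{X,x_0}$ back into the measure, so the rescaled total variation becomes $\lesssim \int |F(\xi)| \cdot |\xi|_{X,x_0}^2 \, d\xi$---a \emph{second} Fourier moment. The Barron hypothesis $f\in\Barron_C(X,x_0)$ controls only the first moment, and the required product of (bound on units)$\times$(total variation of measure) is not controlled in either parametrization; equivalently, the variance of your Monte Carlo estimator is governed by a higher Fourier moment which may be infinite. This is not a fixable bookkeeping issue: the decomposition you wrote is exactly the one that underlies the embedding $\Barron_{\Fourier,2}\hookrightarrow\Barron_\varrho$ (cf.\ \Cref{lem:BarronSpacesElementary}\,3)), and \Cref{prop:FourierBarronNotInReLUBarron} shows this cannot hold with one moment.

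The paper sidesteps this by following Barron's original route: it uses the \emph{first-order} Heaviside representation of the cosine difference (one integration of a sine against indicators of half-lines), in which the units $\Indicator_{(0,\infty)}(\pm\langle\xi^\ast,x\rangle - t)$ are automatically $\{0,1\}$-valued and the density already absorbs the single factor $|\xi|_{X_0}$ coming from the first moment. It then replaces each Heaviside by a narrow ramp $H_\eps = \tfrac{1}{\eps}(\varrho(\cdot)-\varrho(\cdot-\eps))$ with $\eps\sim N^{-1/2}$; this costs two ReLUs per Heaviside, keeps the units bounded by $1$, and incurs an extra $\CalO(N^{-1/2})$ error that is within budget. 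If you wanted to rescue a direct-ReLU-decomposition argument, you would effectively have to re-derive this ramp trick; as written, your construction proves the proposition only under the stronger hypothesis $f\in\Barron_{\Fourier,2}$.
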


\begin{remark}\label{rem:explanationofVartheta}
  The quantity $\vartheta(X, x_0)$ roughly speaking measures how big of a rectangle
  the set $X$ contains.
  More precisely, assume that $X \supset [a,b]$ where $b_i - a_i \geq \eps > 0$
  for all $i \in \FirstN{d}$.
  Then we see with the standard basis $(e_1,\dots,e_d)$ of $\R^d$
  that
  \[
    \eps \, |\xi_i|
    = \bigl|
        \langle \xi, a + \eps \, e_i - x_0  \rangle
        - \langle \xi, a - x_0 \rangle
      \bigr|
    \leq |\langle \xi, a + \eps e_i - x_0 \rangle| + |\langle \xi, a - x_0 \rangle|
    \leq 2 \sup_{x \in X}
             |\langle \xi, x - x_0 \rangle| .
  \]
  Since this holds for all $i \in \FirstN{d}$,
  we see $|\xi|_{X,x_0} \geq \frac{\eps}{2} \, \| \xi \|_{\ell^\infty}$
  and hence $\vartheta(X, x_0) \leq \frac{2}{\eps}$.

  Note that since $X$ has nonempty interior, we can always find a sufficiently
  small non-degenerate rectangle in $X$; therefore, $|\xi|_{X,x_0} \gtrsim \| \xi \|_{\ell^\infty}$
  for all $\xi \in \R^d$.
\end{remark}

\begin{proof}
  It is enough to prove the claim for the case $C = 1$.
  Indeed, for $f \in \Barron_C (X, x_0)$, we have $\widetilde{f} := f / C \in \Barron_1 (X, x_0)$.
  Applying the claim to $\widetilde{f}$, we thus get
  $\| \widetilde{f} - \widetilde{g} \|_{\sup} \leq \kappa \sqrt{d} \cdot N^{-1/2}$,
  where $\widetilde{g}(x) = \sum_{i=1}^{8 N} a_i \, \varrho(b_i + \langle w_i, x \rangle)$ with
  $\| w_i \|_{\ell^\infty}, |a_i|, |b_i| \leq (5 + \vartheta(X,x_0)) \cdot (1 + \| x_0 \|_{\ell^1})$.
  Hence, defining
  $g(x) = \sum_{i=1}^{8 N} \sqrt{C} a_i \, \varrho (\sqrt{C} b_i + \langle \sqrt{C} w_i, x \rangle)$,
  we have $g(x) = C \cdot \widetilde{g}(x)$, which easily yields the claim for $f$.
  We will thus assume $C = 1$ in what follows.
  The actual proof is divided into three steps.

  \smallskip{}

  \noindent
  \textbf{Step~1} \emph{(Writing $f$ as an expectation of indicators of half-spaces):}
  Let $c \in [-C, C]$ and ${F : \R^d \to \CC}$ such that \Cref{eq:FourierRepresentation} is satisfied.
  The case where $F = 0$ almost everywhere is easy to handle;
  we thus assume that $F \neq 0$ on a set of positive measure.

  Set $X_0 := \{ x - x_0 \colon x \in X \}$, and define
  $f_{0} : X_0 \to \R$ by $f_{0}(x) := f(x + x_0) - c$ and
  $F_0 : \R^d \to \CC, \xi \mapsto e^{i \langle x_0, \xi \rangle} \, F(\xi)$.
  With this notation, we have
  $f_{0}(x) = \int_{\R^d} (e^{i \langle x, \xi \rangle} - 1) \cdot F_0(\xi) \, d \xi$
  and ${\int_{\R^d} |\xi|_{X_0} \cdot |F_0(\xi)| \, d \xi \leq C}$,
  where $|\xi|_{X_0} := \sup_{x \in X_0} |\langle x,\xi \rangle| = |\xi|_{X,x_0}$.
  Thus, (the proof of) \cite[Theorem~2]{BarronNeuralNetApproximation} shows for all $x \in X_0$ that
  \[
    f_0 (x)
    = v \cdot
      \int_{\R^d}
        \int_0^1
          \Big(
            \Indicator_{(0,\infty)} \bigl(-\langle \xi / |\xi|_{X_0}, \,\, x \rangle - t\bigr)
            - \Indicator_{(0,\infty)} \bigl(\langle \xi / |\xi|_{X_0}, \,\, x \rangle - t\bigr)
          \Big)
          \cdot s(\xi,t) \cdot p(\xi,t)
        \, dt
      \, d \xi ,
  \]
  where, using the polar decomposition $F_0(\xi) = |F_0(\xi)| \cdot e^{i \, \theta_\xi}$,
  the function ${s : \R^d \times [0,1] \to \{ \pm 1 \}}$ is given by
  $s(\xi, t) = \sign \bigl( \sin(t \, |\xi|_{X_0} + \theta_\xi)\bigr)$,
  while $p : \R^d \times [0,1] \to [0,\infty)$ is defined as
  \({
    p(\xi ,t)
    = \frac{1}{v}
      \cdot |\xi|_{X_0}
      \cdot \bigl|\sin(t \, \bigr|\xi|_{X_0} + \theta_\xi)|
      \cdot |F_0(\xi)|
    .
  }\)
  Finally,
  \[
    v
    = \int_{\R^d}
        \int_0^1
          |\xi|_{X_0}
          \cdot |\sin(t |\xi|_{X_0} + \theta_\xi)|
          \cdot |F_0(\xi)|
        \, d t
      \, d \xi
    \leq C
  \]
  is chosen such that $p$ is a probability density function.
  It is easy to see $v > 0$ since $F_0 \neq 0$ on a set of positive measure.

  For brevity, define $\Omega := (\R^d \setminus \{ 0 \}) \times [0,1]$.
  Furthermore, set $\xi^\ast := \xi / |\xi|_{X_0}$ for $\xi \in \R^d \setminus \{ 0 \}$
  (where we note that $|\xi|_{X_0} > 0$ since $X_0$ has nonempty interior),
  and for $x \in X_0$ define
  \[
    \Gamma_x : \quad
    \Omega \to [-1,1], \quad
    (\xi,t) \mapsto \Indicator_{(0,\infty)} (- \langle \xi^\ast, x \rangle - t)
                    - \Indicator_{(0,\infty)} (\langle \xi^\ast, x \rangle - t) .
  \]
  Finally, let us set
  $V_{\pm} := \int_{\R^d} \int_0^1 \Indicator_{s(\xi,t) = \pm 1} \cdot p(\xi,t) \, d t \, d \xi$,
  and define probability measures $\mu_{\pm}$ on $\Omega$ via
  \[
    d \mu_{\pm}
    := \frac{1}{V_{\pm}}
       \cdot \Indicator_{s(\xi,t) = \pm 1}
       \cdot p(\xi,t) \, d t \, d \xi .
  \]
  Note that $V_+, V_- \geq 0$ and $V_+ + V_- = 1$.
  Also note that strictly speaking $\mu_{\pm}$ is only well-defined in case of $V_{\pm} > 0$.
  In case of $V_{\pm} = 0$, one can simply drop the respective term in what follows;
  we leave the straightforward modifications to the reader.

  Given all these notations, we see that $f_0  = v \cdot (V_+ \cdot f_+ - V_- \cdot f_- )$, where
  \[
    f_{\pm} : X_0 \to \R
    \quad \text{is defined by} \quad
    f_{\pm} (x)
    := \int_{\Omega}
         \Gamma_x (\xi,t)
       \, d \mu_{\pm} (\xi,t) .
  \]

  It is enough to show
  \(
    \| f_{\pm} - R_\varrho \Phi_{\pm} \|_{\sup}
    \leq N^{-1/2} \cdot \bigl(\frac{C}{v \, V_{\pm}} + \kappa_0 \sqrt{d} \, \bigr)
  \)
  for a shallow neural network $\Phi_{\pm}$ with $4 \, N$ neurons in the hidden layer
  and with all weights and biases bounded by ${4 + \vartheta(X, x_0)}$.
  Indeed, once this is shown, it is easy to see that there exists
  a shallow network $\Phi$ with $8 N$ neurons in the hidden layer satisfying
  \[
    R_\varrho \Phi (x)
    = c + v \, V_+ \cdot R_\varrho \Phi_+ (x - x_0) - v \, V_- \cdot R_\varrho \Phi_- (x - x_0) .
  \]
  Because of
  $f(x) = c + f_0(x - x_0) = c + v \, V_+ \cdot f_+ (x - x_0) - v \, V_- \cdot f_-(x-x_0)$
  and $0 < v \leq C$, this yields
  \begin{align*}
    \| f - R_\varrho \Phi \|_{\sup}
    & \leq N^{-1/2}
           \cdot \Big(
                     v \, V_+ \cdot \big( \tfrac{C}{v \, V_+} + \kappa_0 \sqrt{d} \, \big)
                   + v \, V_- \cdot \big( \tfrac{C}{v \, V_-} + \kappa_0 \sqrt{d} \, \big)
                 \Big) \\
    & = N^{-1/2} \cdot \bigl(2 C + v \kappa \, \sqrt{d} \,\bigr)
      \leq \bigl( 2 + \kappa_0 \, \sqrt{d} \, \bigr) \cdot C \cdot N^{-1/2}
      \leq \kappa \, \sqrt{d} \cdot C \cdot N^{-1/2}
  \end{align*}
  for a suitable absolute constant $\kappa > 0$.
  Again, since $0 < v \leq C$ and $c \in [-C,C]$ as well as $0 \leq V_{\pm} \leq 1$,
  and since we assume $C = 1$, it is easy to see that $\Phi$ can be chosen in such a way
  that all weights of $\Phi$
  are bounded by $\bigl(4 + \vartheta(X, x_0)\bigr) \cdot (1 + \| x_0 \|_{\ell^1})$.
  Here, we use that if $\| w \|_{\ell^\infty}, |b| \leq 4 + \vartheta(X, x_0)$, then
  \(
    \varrho(\langle w, x - x_0 \rangle + b)
    = \varrho(\langle w, x \rangle + b - \langle w,x_0 \rangle)
    ,
  \)
  where
  \[
    \bigl|b - \langle w, x_0 \rangle\bigr|
    \leq |b| + \bigl(4 + \vartheta(X, x_0)\bigr) \, \| x_0 \|_{\ell^1}
    \leq \bigl(4 + \vartheta(X, x_0)\bigr) \cdot \bigl(1 + \| x_0 \|_{\ell^1}\bigr) .
  \]

  \medskip{}

  \noindent
  \textbf{Step 2} \emph{(Approximating $f$ by an expectation of ReLU networks):}
  For $\eps > 0$, define
  \[
    H_\eps : \quad
    \R \to [0,1], \quad
    x \mapsto \frac{1}{\eps} \bigl( \varrho(x) - \varrho(x - \eps) \bigr) ,
  \]
  noting that $H_\eps (x) = \Indicator_{(0,\infty)} (x)$ for all $x \in \R \setminus (0,\eps)$.
  Next, for $\eps > 0$ and $x \in X_0$, set
  \[
    N_{\eps,x} : \quad
    \Omega \to [-1,1], \quad
    (\xi,t) \mapsto H_\eps (-\langle \xi^\ast, x \rangle - t)
                    - H_\eps (\langle \xi^\ast, x \rangle - t) .
  \]
  Setting
  \({
    J_{\xi,x}^{(\eps)}
    := [-\langle \xi^\ast,x \rangle - \eps, \,\, - \langle \xi^\ast, x \rangle]
       \cup [\langle \xi^\ast,x \rangle - \eps, \,\, \langle \xi^\ast, x \rangle] ,
  }\)
  we have ${\Gamma_x (\xi,t) = N_{\eps,x} (\xi,t)}$ for all $(\xi,t) \in \Omega$
  with $t \notin J_{\xi,x}^{(\eps)}$.
  Thus, using the bound ${0 \leq p(\xi,t) \leq \frac{1}{v} |\xi|_{X_0} \, |F(\xi)|}$
  and the definitions of $f_{\pm}$ and $\mu_{\pm}$, we see for all $x \in X_0$ that
  \begin{align*}
    \Big| f_{\pm}(x) - \int_{\Omega} N_{\eps,x} (\xi,t) \, d \mu_{\pm} (\xi,t) \Big|
    & \leq \int_{\R^d \setminus \{ 0 \}}
             \int_0^1
               2 \cdot \Indicator_{J_{\xi,x}^{(\eps)}} (t)
                 \cdot \frac{1}{V_{\pm}} \, p(\xi,t)
             \, d t
           \, d \xi \\
    & \leq \frac{4 \eps}{v \, V_{\pm}} \int_{\R^d} |\xi|_{X_0} \cdot |F(\xi)| \, d \xi
      \leq \frac{4 \eps C}{v \, V_{\pm}} .
  \end{align*}
  We now choose $\eps := \frac{1}{4} N^{-1/2}$ and define
  ${f_{\pm,\eps} : X_0 \to \R, x \mapsto \int_{\Omega} N_{\eps,x}(\xi,t) \, d \mu_{\pm} (\xi,t)}$.
  Then the preceding estimate shows that
  $\| f_{\pm} - f_{\pm,\eps} \|_{\sup} \leq N^{-1/2} \cdot \frac{C}{v \, V_{\pm}}$.

  \begin{figure}[ht]
    \begin{center}
      \includegraphics[width=15cm]{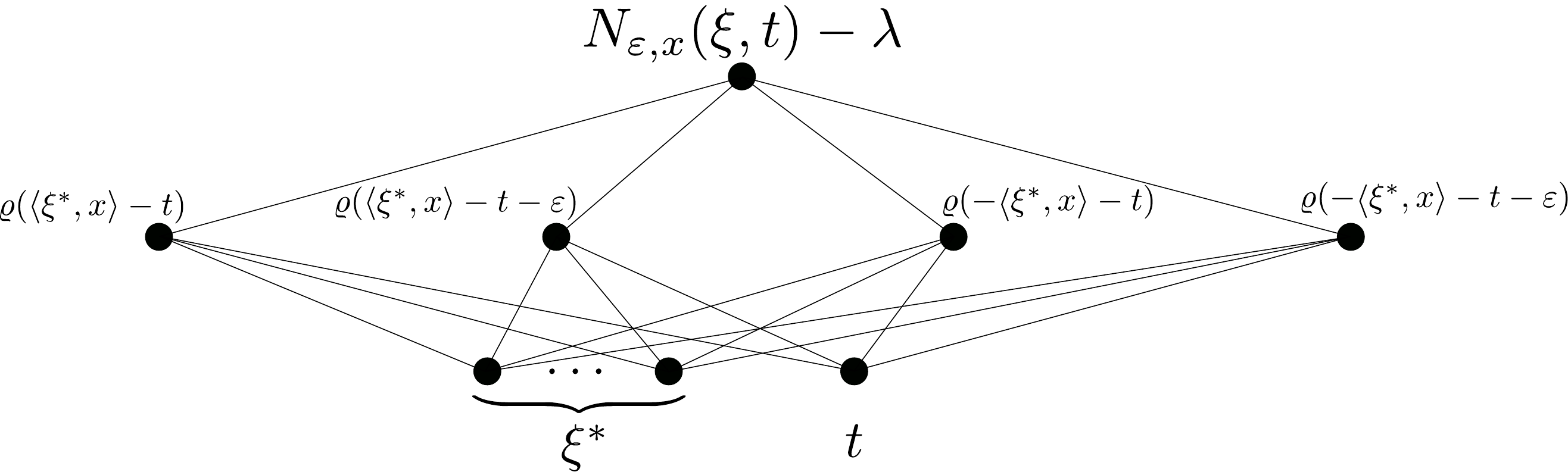}
    \end{center}
    \caption{\label{fig:BarronProofNetworkFigure}
      Representation of the function $(\xi, t) \mapsto N_{\eps,x}(\xi, t) - \lambda$
      as a ReLU network with  $L = 2$ layers,
      $W = 4d + 16$ weights, and $U = 5$ computation units
      (using the notation of \cite{BartlettNearlyTightVCDimensionBounds}).}
  \end{figure}

  \medskip{}

  \noindent
  \textbf{Step 3} \emph{(Using bounds for empirical processes to complete the proof):}
  Denote by $\calG_0$ the set of all functions $g : \R^d \times \R \to \R$ that are implemented
  by ReLU neural networks with the architecture shown in Figure~\ref{fig:BarronProofNetworkFigure}
  (that is, fully connected with one hidden layer containing four neurons).
  Then the VC dimension bound for neural networks
  shown in \cite[Theorem~6]{BartlettNearlyTightVCDimensionBounds}
  implies that there is an absolute constant $\kappa_1 \in \N$ such that
  \[
    \VC(\{ \Indicator_{g > 0} \colon g \in \calG_0 \})
    \leq \kappa_1 \, d .
  \]

  Moreover, using the map
  ${\Theta : \Omega \to \R^d \times [0,1], (\xi,t) \mapsto (\xi^\ast , t) = (\xi/|\xi|_{X_0}, t)}$,
  the construction in Figure~\ref{fig:BarronProofNetworkFigure} shows for arbitrary $\lambda \in \R$
  that
  \[
    \{ \Indicator_{N_{\eps,x} > \lambda} \colon x \in X_0 \}
    \subset \{ \Indicator_{g \circ \Theta > 0} \colon g \in \calG_0 \} .
  \]
  Directly from the definition of the VC dimension, we see that composing a class of functions with
  a fixed map (in this case, $\Theta$) can not increase the VC dimension, so that we get
  \({
    \VC (\{ \Indicator_{N_{\eps,x} > \lambda} \colon x \in X_0 \})
    \leq \kappa_1 \, d
  }\)
  for all $\eps > 0$ and $\lambda \in \R$.

  Now, using the bound in \Cref{prop:PseudoDimensionGeneralizationBound}
  and recalling that $\EE_{(\xi,t) \sim \mu_{\pm}} [N_{\eps,x}(\xi, t)] = f_{\pm,\eps}(x)$,
  we see that if we choose $(\xi_1,t_1),\dots,(\xi_N,t_N) \overset{\text{i.i.d.}}{\sim} \mu_{\pm}$,
  then there is a universal constant $\kappa_2 > 0$ satisfying for all $N \in \N$ that%
  \footnote{Strictly speaking, \Cref{prop:PseudoDimensionGeneralizationBound}
  yields a bound for
  \[
    \sup_{X_{00} \subset X_0 \text{ finite}} \,\,
      \EE
      \Big[
        \sup_{x \in X_{00}}
          \big| f_{\pm,\eps}(x) - N^{-1} \textstyle{\sum_{i=1}^N} N_{\eps,x}(\xi_i, t_i) \big|
      \Big] .
  \]
  But since $x \mapsto f_{\pm,\eps}(x)$ and $x \mapsto N_{\eps,x}(\xi_i, t_i)$
  are continuous, this coincides with the expression in \Cref{eq:PseudoDimensionBoundApplication}.}
  \begin{equation}
    \EE
    \Big[
      \sup_{x \in X_0}
        \Big|
          f_{\pm,\eps}(x)
          - \frac{1}{N} \sum_{i=1}^N N_{\eps,x}(\xi_i, t_i)
        \Big|
    \Big]
    \leq \kappa_2 \cdot \sqrt{\frac{\kappa_1 d}{N}} .
    \label{eq:PseudoDimensionBoundApplication}
  \end{equation}
  In particular, there is one specific realization
  $\bigl( (\xi_1,t_1),\dots,(\xi_N,t_N)\bigr) \in \Omega^N$ such that
  \[
    \sup_{x \in X_0}
    \Big|
      f_{\pm,\eps}(x) - \frac{1}{N} \sum_{i =1}^N N_{\eps,x}(\xi_i,t_i)
    \Big|
    \leq \kappa \, \sqrt{d} \, N^{-1/2} .
  \]
  Clearly,
  \(
    g_{\pm, \eps} :
    \R^d \to \R,
    x \mapsto \frac{1}{N} \sum_{i=1}^N N_{\eps,x}(\xi_i,t_i)
  \)
  is implemented by a shallow ReLU network with $4N$ neurons in the hidden layer, as follows from
  \[
    \frac{1}{N}N_{\eps,x}(\xi_i,t_i)
    = \frac{\eps^{-1}}{N}
      \cdot \Big(
              \varrho \bigl(-\langle \xi_i^\ast, x \rangle - t_i\bigr)
              - \varrho \bigl(-\langle \xi_i^\ast, x \rangle - t_i - \eps\bigr)
              - \varrho \bigl(\langle \xi_i^\ast, x \rangle - t_i\bigr)
              + \varrho \bigl(\langle \xi_i^\ast, x \rangle - t_i - \eps\bigr)
            \Big) .
  \]
  Now, note by definition of $\vartheta(X,x_0)$ and $\xi^\ast = \xi/|\xi|_{X_0}$ that
  $\| \xi_i^\ast \|_{\ell^\infty} \leq \vartheta(X, x_0)$.
  Furthermore, $|t_i| \leq 1$.
  Finally, by choice of $\eps = \frac{1}{4} N^{-1/2}$, we see $\eps^{-1}/N = 4 N^{-1/2} \leq 4$.
  Overall, we thus see that $g_{\pm,\eps} = R_\varrho \Phi_{\pm}$ where the shallow neural network
  $\Phi_{\pm}$ has $4 N$ neurons in the hidden layer and all weights and biases
  bounded by $4 + \vartheta(X, x_0)$.
\end{proof}



\section{Approximation of sets with Barron class boundary}
\label{sec:BarronClassBoundaryApproximation}


In this section, we show that indicator functions of sets with Barron class boundary
are well approximated by ReLU neural networks.
Essentially the only property of Barron class functions that we will need is
that they can be uniformly approximated up to error $\CalO (N^{-1/2})$ by shallow ReLU
networks with $N$ neurons and suitably bounded weight.
Thus, to allow for a slightly more general result, we introduce a
``Barron approximation space'' containing all such functions.

\begin{definition}\label{def:BarronApproxSpace}
Let $d \in \N$ and let $X \subset \R^d$ be bounded with nonempty interior.
For $C > 0$, we define the \emph{Barron approximation set} $\BarronApproximation_C (X)$
as the set of all functions $f : X \to \R$ such that for every $N \in \N$
there is a shallow neural network $\Phi$ with $N$ neurons in the hidden layer such that
\[
  \| f - R_\varrho \Phi \|_{\sup} \leq \sqrt{d} \cdot C \cdot N^{-1/2}
\]
and such that all weights (and biases) of $\Phi$ are bounded in absolute value by
\[
  \sqrt{C}
  \cdot \Big(
          5 + \inf_{x_0 \in X}
                \bigl[\, \| x_0 \|_{\ell^1} + \vartheta(X, x_0) \,\bigr]
        \Big) ,
  \quad \text{where} \quad
  \vartheta(X, x_0)
  := \sup_{\xi \in \R^d \setminus \{ 0 \}}
       \big( \| \xi \|_{\ell^\infty} \big/ |\xi|_{X,x_0} \big) .
\]
The set $\BarronApproximation(X) = \bigcup_{C > 0} \BarronApproximation_C (X)$
is called the \emph{Barron approximation space}.
\end{definition}

\begin{remark}\label{rem:AllBarronSetsAreSubsetsOfTheApproximationSpace}
a) Using \Cref{prop:FourierBarronNotInReLUBarron}, it is not hard to see
$\mathcal{B}_C(X, x_0) \subset \mathcal{BA}_{\kappa_0 C}(X)$
for every $C > 0$, with a constant $\kappa_0 > 0$ that is \emph{absolute},
(i.e., independent of all other quantities and objects).

\smallskip{}

b) For the infinite-width Barron space $\Barron_\varrho (X)$ associated to the ReLU
function (which will be formally introduced in \Cref{sec:BarronSpaces}), it follows from
\cite[Theorem~12]{ma2020towards} that
\[
  \Barron_{\varrho, C}(X)
  := \big\{ f \in \Barron_{\varrho}(X) \,\,\colon\,\, \|f\|_{\Barron_{\varrho}(X)} \leq C \big\}
  \subset \BarronApproximation_{\sigma C} (X) ,
\]
where the constant $\sigma > 0$ scales polynomially with $d$
and linearly with $\sup_{x \in X} \| x \|_{\ell^\infty}$.

\smallskip{}

c) If $Y \subset X$ has nonempty interior, we have
$\vartheta(X,y_0) \leq \vartheta(Y,y_0)$ for all $y_0 \in Y$ and hence
\(
  \inf_{x_0 \in X}
    \big[ \| x_0 \|_{\ell^1} + \vartheta(X, x_0) \big]
  \leq \inf_{y_0 \in Y}
    \big[ \| y_0 \|_{\ell^1} + \vartheta(X, y_0) \big]
  \leq \inf_{y_0 \in Y}
    \big[ \| y_0 \|_{\ell^1} + \vartheta(Y, y_0) \big] .
\)
Based on this, it is straightforward to see
\begin{equation}
  f|_Y \in \BarronApproximation_C (Y)
  \quad \text{if } f \in \BarronApproximation(X)
  \text{ and } Y \subset X \text{ has nonempty interior} .
  \label{eq:BarronApproximationSpaceRestriction}
\end{equation}
\end{remark}

Using the notion of Barron approximation spaces, we can now formally define
sets with Barron class boundary. 

\begin{definition}\label{def:BarronClassifiers}
  Let $d \in \N_{\geq 2}$ and $B > 0$ and let $Q = [a,b] \subset \R^d$ be a rectangle.
  A function $F : Q \to \R$ is called a \emph{Barron horizon function with constant $B$},
  if there are $i \in \FirstN{d}$ and ${f \in \mathcal{BA}_B \bigl([a^{(i)}, b^{(i)}]\bigr)}$
  as well as $\theta \in \{ \pm 1 \}$ such that
  \[
    F(x) = \Indicator_{\theta x_i \leq f(x^{(i)})}
    \qquad \forall \, x \in Q .
  \]
  We write $\BarronHorizon_B (Q)$ for the set of all such functions.

  Finally, given $M \in \N$ and $B > 0$, a compact set $\Omega \subset \R^d$
  is said to have a \emph{Barron class boundary with constant $B$} if there exist rectangles
  $Q_1,\dots,Q_M \subset \R^d$ such that $\Omega \subset \bigcup_{i=1}^M Q_i$ where the rectangles
  have disjoint interiors (i.e., $Q_i^{\circ} \cap Q_j^{\circ} = \emptyset$ for $i \neq j$)
  and such that ${\Indicator_{Q_i \cap \Omega} \in \BarronHorizon_{B} (Q_i)}$
  for each $i \in \FirstN{M}$.
  We write $\BarronBoundary_{B,M}(\R^d)$ for the class of all such sets.
  Also, a family $(Q_j)_{j=1}^M$ of rectangles as above is called an \emph{associated cover}
  of $\Omega$.
\end{definition}

\begin{remark}
  By \Cref{rem:AllBarronSetsAreSubsetsOfTheApproximationSpace}, the set of functions
  with Barron class boundary contains all characteristic functions
  of sets whose boundary is locally described by functions in the Fourier-analytic
  Barron space or the infinite-width Barron space associated to the ReLU.
\end{remark}

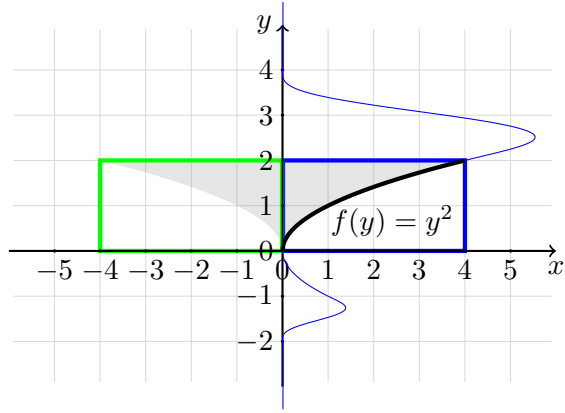
\begin{figure}[ht]
  \begin{center}
    \begin{tikzpicture}[scale=0.6]
      \draw[very thin, gray!30, step=1 cm](-5.9,-2.9) grid (5.9,4.9);

      \fill [gray, samples=300, opacity=0.2, domain=-4:4, variable=\x]
        (-4, 2)
        -- plot ({\x}, {sqrt(abs(\x))})
        -- (4, 2)
        -- cycle;

      \draw [ultra thick, blue]
        (0.01,0)
        -- (.01,2)
        -- (4,2)
        -- (4,0)
        -- cycle;

      \draw [ultra thick, green]
        (-0.01,0)
        -- (-0.01,2)
        -- (-4,2)
        -- (-4,0)
        -- cycle;

      \draw [thick] [->] (-6,0)--(6,0) node[right, below] {$x$};
       \foreach \x in {-5,...,5}
         \draw[xshift=\x cm, thick] (0pt,-1pt)--(0pt,1pt) node[below] {$\x$};

      \draw [thick] [->] (0,-3)--(0,5) node[above, left] {$y$};
       \foreach \y in {-2,...,4}
         \draw[yshift=\y cm, thick] (-1pt,0pt)--(1pt,0pt) node[left] {$\y$};

      \draw [ultra thick, domain=0:2, samples=300, variable=\x]
        plot ({\x*\x}, {\x}) node[right] at (0.8,0.65) {$f(y)=y^2$};

      \draw [thin, blue, domain=-1:0, samples=300, variable=\x]
        plot ({\x*\x}, {\x}) ;

      \draw [thin, blue, domain=-2:-1, samples=300, variable=\y]
        plot({(\y+2)^4*(1+(-6)*(\y+1)+19*(\y+1)^2-44*(\y+1)^3)},{\y});

      \draw [thin, blue, domain=2:4, samples=300, variable=\y]
        plot ({(\y-4)^4*(1/4+3/4*(\y-2)+19/16*(\y-2)^2+44/32*(\y-2)^3}, {\y});

      \draw [thin, blue]
        (0.01,-2)
        --(0.01,-3.5)
        -- cycle;

      \draw [thin, blue]
        (0.01,4)
        --(0.01,5.5)
        -- cycle;

      %
      %
      %
      %
      %
    \end{tikzpicture}
  \end{center}
  \captionsetup{width=.7\linewidth}
  \caption{\label{fig:CuspDomain} \small An illustration of the cusp domain $\Omega$ (shown in gray)
  discussed in Part~(2) of the example below.
  The blue and green boxes show the rectangles $Q_1,Q_2 \subset \R^2$ satisfying
  $\Omega \subset Q_1 \cup Q_2$ and such that $\Indicator_{\Omega \cap Q_i}$
  is a Barron horizon function.
  For more details see the example below.}
\end{figure}

The following example illustrates the above definition.

\begin{example*}
  \emph{(1)} Every set $\Omega$ of the form $\Omega = \{ x \in Q \colon x_1 \leq f(x_2,\dots,x_d) \}$
  for a rectangle $Q = [a^{(1)},b^{(1)}] \times Q' \subset \R^d$ and a function $f : Q' \to \R$
  from the Fourier-analytic Barron class $\Barron_{B}(Q',x_0)$ (for arbitrary $x_0 \in Q'$)
  belongs to $\BarronBoundary_{\kappa_0 B,1}(\R^d)$, for the absolute constant $\kappa_0 > 0$
  from \Cref{rem:AllBarronSetsAreSubsetsOfTheApproximationSpace}.

  Examples for such functions $f$ are discussed in great length in
  \cite[Section~IX]{BarronUniversalApproximation};
  here, we just mention three special cases.
  First, for the Gaussian $f(x) = e^{-|x|^2 / 2}$, it holds that
  ${f \in \Barron_{\sqrt{d}}(Q',x_0)}$ for any rectangle $Q' \subset \R^{d-1}$
  and any $x_0 \in Q'$; thus, one only has a polynomial dependence on the dimension.
  Second, if $Q' = [0,1]^{d-1}$ and ${f(x) = \sum_{k \in \Z^{d-1}} c_k e^{2\pi i \langle k,x \rangle}}$,
  then $f \in \Barron_{C}(Q',0)$ for $C = |c_0| +  \sum_{k \in \Z^{d-1}} |k| \,|c_k|$;
  this essentially follows as in \cite[Section~IX, Point~(16)]{BarronUniversalApproximation}.
  Finally, if $f \in C^{k}(Q')$ for $k \geq 2 + \lfloor (d-1) / 2 \rfloor $,
  then \cite[Section~IX, Point~(15)]{BarronUniversalApproximation} shows that $f$ belongs to
  $\Barron_C (Q',x_0)$, for a suitable $C = C(f,Q') > 0$.
  This last observation, however, is more of qualitative than of quantitative use,
  since the resulting constant $C$ is often quite large if $d$ is large.

  \medskip{}

  \emph{(2)} The class of sets with Barron class boundary also contains sets that are not necessarily
  Lipschitz domains.
  An example of such a domain is the cusp domain
  \[
    \Omega
    = \big\{
        (x,y) \in [-4,4] \times [0, 2]
        \quad\colon\quad
        y \geq \sqrt{|x|}
    \big\}
  \]
  shown in \Cref{fig:CuspDomain}.
  Indeed, we claim for the rectangles $Q_1 = [0,4] \times [0, 2]$
  and $Q_2 = [-4,0] \times [0, 2]$ that $\Indicator_{\Omega \cap Q_i}$
  is a Barron horizon function.
  We only verify this for $Q_1$.
  To see this, note that the function $f : [0, 2] \to \R, y \mapsto y^2$
  can be extended to a function $f \in C_c^3 (\R)$;
  one such extension is shown in \Cref{fig:CuspDomain}.
  As seen above, this implies that $f \in \Barron_C([0,2], 0)$
  for a certain $C > 0$.
  Because of $\Indicator_{\Omega \cap Q_1} (x,y) = \Indicator_{x \leq f(y)}$
  for $(x,y) \in Q_1$, this implies that $\Indicator_{\Omega \cap Q_1}$ is a Barron horizon function.
\end{example*}

We will show in \Cref{sec:GeneralMeasures} that it is impossible to derive nontrivial minimax
bounds for the class of sets with Barron boundary
for the case of \emph{general} probability measures.
For this reason, we will restrict to the following class of measures.

\begin{definition}\label{def:TubeCompatibleMeasures}
  Let $\mu$ be a finite Borel measure on $\R^d$.
  We say that $\mu$ is \emph{tube compatible} with parameters $\alpha \in (0,1]$ and $C > 0$
  if for each measurable function $f : \R^{d-1} \to \R$, each $i \in \FirstN{d}$
  and each $\eps \in (0,1]$, we have
  \[
    \mu\bigl(T_{f,\eps}^{(i)}\bigr) \leq C \cdot \eps^\alpha
    \quad \text{where} \quad
    T_{f,\eps}^{(i)} := \big\{ x \in \R^d \colon | x_i - f(x^{(i)}) | \leq \eps \big\} .
  \]
  The set $T_{f,\eps}^{(i)}$ is called a \emph{tube of width $\eps$ (associated to $f$)}.
\end{definition}

\begin{remark}\label{rem:TubeCompatibleMeasures}
  The definition might appear technical, but it is satisfied for a wide class of product measures.
  For instance, if $\mu_1,\dots,\mu_d$ are Borel probability measures on $\R^d$
  such that each distribution function $F_i (x) = \mu_i ( (-\infty,x] )$
  is $\alpha$-Hölder continuous with constant $C$,
  then the product measure $\mu = \mu_1 \otimes \cdots \otimes \mu_d$ is tube compatible
  with parameters $\alpha$ and $2^\alpha C$, since Fubini's theorem shows for
  $\mu^{(i)} := \mu_1 \otimes \cdots \mu_{i-1} \otimes \mu_{i+1} \otimes \cdots \otimes \mu_d$ that
  \[
    \mu(T_{f,\eps}^{(i)})
    = \int_{\R^{d-1}}
        \int_{\R}
          \Indicator_{|y - f(x)| \leq \eps}
        \, d \mu_i (y)
      \, d \mu^{(i)} (x) ,
  \]
  where
  \[
    \int_{\R}
      \Indicator_{|y - f(x)| \leq \eps}
    \, d \mu_i (y)
    = \mu_i ( [f(x)-\eps, f(x) + \eps])
    \!=\! F_i \bigl( f(x) + \eps\bigr) - F_i \bigl(f(x) - \eps\bigr)
    \leq C \cdot (2\eps)^\alpha
    =    2^\alpha C \cdot \eps^\alpha ,
  \]
  from which we easily get $\mu\bigl(T_{f,\eps}^{(i)}\bigr) \leq 2^\alpha C \cdot \eps^\alpha$,
  as claimed.

  Measures that do not have a product structure can be tube compatible as well.
  For example, if $\mu$ is tube compatible with parameters $\alpha \in (0,1]$ and $C > 0$,
  then any measure $\nu$ of the form $d \nu = f \, d \mu$ with a bounded density function $f$
  will be tube compatible, with parameters $\alpha$ and $C \cdot \sup_{x} f(x)$.
\end{remark}

Next, we give our main approximation result for functions $\Indicator_\Omega$,
where $\Omega$ is a set with Barron class boundary.


\begin{theorem}\label{thm:BarronBoundaryApproxGuarantee}
  Let $d \in \N_{\geq 2}$, $M,N \in \N$, $B,C > 0$, and $\alpha \in (0,1]$,
  and let $\Omega \in \BarronBoundary_{B,M}(\R^d)$.

  There exists a neural network $I_N$ with $3$ hidden layers
  such that for each tube compatible measure $\mu$ with parameters $\alpha,C$, we have
  \[
    \mu
    (
      \{
        x \in \R^d : \Indicator_\Omega (x) \neq R_\varrho I_N(x)
      \}
    )
    \leq 6 C M B^\alpha \, d^{3/2} \, N^{-\alpha/2}
    .
  \]

  Moreover, $0 \leq R_\varrho I_N(x) \leq 1$ for all $x \in \R^d$
  and the architecture of $I_N$ is given by
  \[
    \mathcal{A}
    = \big(
        d, \,\, M(N+2d+2), \,\, M(4d+2), \,\, M, \,\, 1
      \big)
    .
  \]
  Thus, $I_N$ has at most $7 M (N+d)$ neurons and at most $54 d^2 M \, N$ non-zero weights.
  The weights (and biases) of $I_N$ are bounded in magnitude by
  $d (4 + R) (1 + B) + \sqrt{N} \cdot \bigl(B^{-1} + B^{-1/2}\bigr)$,
  where $R = \sup_{x \in \Omega} \| x \|_{\ell^\infty}$.
\end{theorem}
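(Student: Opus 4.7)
The strategy is to exploit the defining cover of $\Omega \in \BarronBoundary_{B,M}(\R^d)$: there exist rectangles $Q_1,\dots,Q_M$ with disjoint interiors, covering $\Omega$, such that on each $Q_i$ the restriction $\Indicator_{\Omega \cap Q_i}$ is a Barron horizon function $\Indicator_{\theta_i x_{j_i} \leq f_i(x^{(j_i)})}$ with $f_i \in \BarronApproximation_B\bigl([a_i^{(j_i)}, b_i^{(j_i)}]\bigr)$. I will build, for each $i$, a ReLU piece $g_i$ that on $Q_i$ approximates this horizon function and vanishes off a slight enlargement of $Q_i$, and then set $R_\varrho I_N = \sum_{i=1}^M g_i$. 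Pairwise disjointness of the $Q_i^\circ$ ensures that this sum reduces on each interior to the single relevant $g_i$, so the analysis is local.

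\textbf{Per-piece construction.} Fix $i$ and write $j = j_i$, $\theta = \theta_i$, $f = f_i$. By Definition~\ref{def:BarronApproxSpace}, there is a shallow ReLU network $\Psi_i$ with $N$ hidden neurons such that $\|\Psi_i - f\|_{\sup} \leq \sqrt{d-1}\, B\, N^{-1/2}$, with weights bounded as required. Choose $\delta := B\, N^{-1/2}$ and introduce the soft Heaviside $H_\delta(t) := \delta^{-1}\bigl(\varrho(t) - \varrho(t-\delta)\bigr)$, so that
\[
G_i(x) \;:=\; H_\delta\bigl(\Psi_i(x^{(j)}) - \theta x_j\bigr) \;\in\; [0,1]
\]
agrees with $\Indicator_{\theta x_j \leq f(x^{(j)})}$ off a tube of width $\leq 2\delta$ around the graph of $f$. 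To localize to $Q_i$, I use $2d$ extra layer-1 neurons to compute $\varrho(a_{i,k}-x_k)$ and $\varrho(x_k-b_{i,k})$ for $k \in \FirstN{d}$, assemble at layer~2 the ``outside penalty''
\[
s_i(x) \;:=\; \delta^{-1}\sum_{k=1}^d\bigl[\varrho(a_{i,k}-x_k) + \varrho(x_k-b_{i,k})\bigr],
\]
and realize at layer~3 the combination $g_i(x) := \varrho\bigl(G_i(x) + \varrho(1 - s_i(x)) - 1\bigr)$. Because $G_i \in [0,1]$, this equals $G_i$ on $Q_i$ (where $s_i = 0$) and vanishes once $s_i(x) \geq 1$, i.e.\ once the $\ell^1$-distance from $x$ to $Q_i$ exceeds $\delta$. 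The advertised block widths $(N+2d+2,\,4d+2,\,1)$ accommodate $\Psi_i$, the two ReLU units forming $H_\delta$, the layer-2 assembly and clipping of $s_i$, and a few auxiliary pass-throughs for $x_{j_i}$ and intermediate variables; the prefactor $M$ comes from running $M$ such blocks in parallel and summing in the final affine map.

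\textbf{Error analysis and obstacle.} The set $\{R_\varrho I_N \neq \Indicator_\Omega\}$ is contained in
\[
\bigcup_{i=1}^M \Bigl( T_{f_i,\, 2\delta}^{(j_i)} \;\cup\; B_{i,\delta} \Bigr),
\]
where $B_{i,\delta}$ is a $\delta$-neighborhood of $\partial Q_i$, itself a union of $2d$ coordinate tubes $T_{c,\delta}^{(k)}$ with $c$ constant. Tube compatibility (Definition~\ref{def:TubeCompatibleMeasures}) gives each such tube $\mu$-measure at most $C(2\delta)^\alpha$; there are at most $M(1 + 2d)$ tubes in total, so the exceptional set has $\mu$-measure $\lesssim C M d \cdot (B N^{-1/2})^\alpha$, and bookkeeping yields the stated $6 C M B^\alpha d^{3/2} N^{-\alpha/2}$. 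The weight bound $d(4+R)(1+B) + \sqrt{N}(B^{-1}+B^{-1/2})$ follows by combining the weight bound from Definition~\ref{def:BarronApproxSpace} for $\Psi_i$ (using $\|x\|_{\ell^\infty} \leq R$ on $Q_i$, so $\|x_0\|_{\ell^1} \leq dR$) with the $1/\delta = B^{-1}\sqrt{N}$ normalization appearing in $H_\delta$ and $s_i$. The main obstacle is synchronizing the three error sources---the uniform error of $\Psi_i$, the Heaviside transition width $\delta$, and the localization transition width---so that the exceptional set is exactly a union of tubes that Definition~\ref{def:TubeCompatibleMeasures} controls, while simultaneously fitting the construction into three hidden layers of the prescribed widths. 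The clean identity $g_i = \varrho\bigl(G_i + \varrho(1-s_i) - 1\bigr)$ is what permits the ``product'' of the horizon factor and the rectangle indicator to be realized by a single extra ReLU application without incurring additional approximation error.
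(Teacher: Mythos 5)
Your overall strategy (local horizon approximation by a shallow network, a soft Heaviside $H_\delta$, a ReLU localization gadget, and tube compatibility to control the exceptional set) is the same as the paper's, and the measure estimate itself would go through with your bookkeeping. However, there are two genuine gaps. First, your localization $g_i=\varrho\bigl(G_i+\varrho(1-s_i)-1\bigr)$ shrinks \emph{outward}: each piece is exact on all of $Q_i$ but remains nonzero on a $\delta$-collar \emph{outside} $Q_i$. Since the rectangles only have disjoint interiors, these collars overlap neighbouring rectangles: at a point $x$ on (or within $\ell^1$-distance $\delta$ of) a face shared by $Q_i$ and $Q_{i'}$ and lying well inside $\Omega$, one has $g_{i'}(x)=G_{i'}(x)=1$ while $g_i(x)=\varrho\bigl(G_i(x)+\varrho(1-s_i(x))-1\bigr)>0$, so $\sum_m g_m(x)>1$. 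The theorem asserts $0\leq R_\varrho I_N(x)\leq 1$ for \emph{all} $x\in\R^d$, and since the last layer is affine you cannot clip afterwards; this part of the statement fails for your construction. The paper avoids it by localizing \emph{inward} with $\eta_\eps(x,y)=\varrho\bigl(\sum_k t_k(x_k)+\varrho(y)-d\bigr)$, whose transition region lies inside $Q_m$, so each summand vanishes outside $Q_m$ and on $\partial Q_m$ and the summands have essentially disjoint supports.

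Second, the weight bound is not established. Definition~\ref{def:BarronApproxSpace} only bounds the weights of $\Psi_i$ by $\sqrt{B}\bigl(5+\inf_{x_0}[\|x_0\|_{\ell^1}+\vartheta(\cdot,x_0)]\bigr)$, and the quantity $\vartheta$ blows up for rectangles that are thin in some coordinate direction; your argument silently drops this term (and, as a smaller point, $Q_i$ need not lie in $[-R,R]^d$, so $\|x_0\|_{\ell^1}\leq dR$ requires first replacing $Q_i$ by $Q_i\cap[-R,R]^d$, as the paper does). The paper resolves this with a case distinction: if some side of $Q_m$ is shorter than $2B\sqrt{d-1}\,N^{-1/2}$, the corresponding subnetwork is set to zero and the resulting error is charged to $\mu(Q_m)$, which is controlled because such a $Q_m$ is itself contained in a tube of width $2B\sqrt{d-1}\,N^{-1/2}$; only for the remaining ``large'' rectangles does \Cref{rem:explanationofVartheta} give $\vartheta\lesssim B^{-1}(d-1)^{-1/2}\sqrt{N}$, which is precisely what produces the $\sqrt{N}\,(B^{-1}+B^{-1/2})$ term in the stated bound. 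Without this device (or an equivalent one), your claimed bound $d(4+R)(1+B)+\sqrt{N}(B^{-1}+B^{-1/2})$ does not follow. (A minor bookkeeping slip: with $\delta=BN^{-1/2}$ and sup-error $\sqrt{d-1}\,B N^{-1/2}$, the disagreement tube has width about $(1+\sqrt{d-1})BN^{-1/2}$, not $2\delta$; the final constant still works out if one uses width $2B\sqrt{d-1}\,N^{-1/2}$.)
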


\begin{proof}
The proof will proceed in three parts.
First we construct a neural network that satisfies a certain approximation accuracy,
without going into much detail regarding the architecture of this network.
Afterwards, we analyze the network architecture, and bound the network weights.

\paragraph{Network construction and approximation bound:} \ 

\medskip{}

\noindent
\textbf{Step~1. (Construction of neural networks locally approximating boundaries)}
Let $(Q_j)_{j=1}^M$ be an associated cover of $\Omega$.
Fix $m \in \FirstN{M}$ and write $Q_m := [a,b]$.
By the assumption $\Omega\in\BarronBoundary_{B,M}(\R^d)$, there exist $i = i(m) \in \FirstN{d}$
and $\theta_m \in \{ \pm 1 \}$ as well as a function ${f_m \in \BarronApproximation_B (Q_m^i)}$
such that $\Indicator_{\Omega}(x) = \Indicator_{\theta_m x_i \leq f_m(x^{(i)})}$ for all $x \in Q_m$.
Here, we used the notation $Q_m^i := \prod_{j \neq i} [a_j, b_j]$.
With $R = \sup_{x \in \Omega} \| x \|_{\ell^\infty}$ as in the theorem statement,
note that if we replace each $Q_j$ by $\widetilde{Q_j} := Q_j \cap [-R,R]^d$,
then the family $(\widetilde{Q_j})_{j=1}^M$ is still a cover of $\Omega$ consisting of rectangles.
Furthermore, \Cref{eq:BarronApproximationSpaceRestriction}
shows that $f_m \in \BarronApproximation_C (\widetilde{Q_m^i})$,
and we clearly have $\Indicator_{\Omega}(x) = \Indicator_{\theta_m x_i \leq f_m(x^{(i)})}$
for all $x \in \widetilde{Q_m}$.
Therefore, we can assume in the following that $Q_m \subset [-R,R]^d$ for all $m \in \FirstN{M}$.

Now, by Definitions~\ref{def:BarronClassifiers} and \ref{def:BarronApproxSpace},
there exists a shallow neural network $I_N^m$ with $N$ neurons in the hidden layer
such that $\|f_m - R_\varrho I_N^m\|_{\sup} \leq \gamma N^{-1/2}$
where $\gamma := B \sqrt{d-1}$.
Furthermore, all weights and biases of $I_N^m$ are bounded by
$\sqrt{B} \cdot \bigl(6 + \vartheta(Q_m^i, q_m) + \| q_m \|_{\ell^1}\bigr)$
for some ${q_m \in Q_m^i \subset [-R,R]^{d-1}}$.

\medskip{}

\noindent
\textbf{Step 2. (Construction of neural networks approximating horizon functions)}
Set
\[
  S_m := \big\{
           x \in Q_m
           :
           f_m (x^{(i)}) \geq \theta_m \, x_i
         \big\}
  \qquad \text{and} \qquad
  T_m := \big\{
           x \in Q_m
           :
           R_\varrho I_N^m (x^{(i)}) \geq \theta_m \, x_i
         \big\} ,
\]
where $I_N^m$ is the network obtained in the previous step.
Recalling $\| f_m - R_\varrho I_N^m \|_{\sup} \leq \gamma \, N^{-1/2}$
and using the notation $S_m \triangle T_m = (S_m \setminus T_m) \cup (T_m \setminus S_m)$,
we then see
\begin{align*}
    & S_m \triangle T_m \\
    & = \big\{
          x \in Q_m
          :
          f_m(x^{(i)})
          < \theta_m x_i
          \leq R_\varrho I_N^m(x^{(i)})
        \big\} 
        \cup \big\{
               x \in Q_m
               :
               R_\varrho I_N^m(x^{(i)}) < \theta_m x_i \leq f_m(x^{(i)})
             \big\} \\
    & \subset \big\{
                x \in Q_m
                :
                - \gamma N^{-1/2} \leq f_m(x^{(i)})-\theta_m x_i < 0
              \big\} 
              \cup \big\{
                     x \in Q_m
                     :
                     0 \leq f_m(x^{(i)}) - \theta_m x_i < \gamma N^{-1/2}
                   \big\} \\
    & \subset \big\{
                x\in Q_m
                :
                |f_m(x^{(i)})-\theta_m x_i| \leq \gamma N^{-1/2}
              \big\}.
\end{align*}
Since $\mu$ is $\alpha,C$ tube compatible and since $\Indicator_{\Omega}(x) = \Indicator_{S_m}(x)$
for $x \in Q_m$, it follows that
\begin{align*}
  \mu
  (
    \{
      x\in Q_m
      :
      \Indicator_\Omega(x) \neq \Indicator_{T_m}(x)
    \}
  )
  & = \mu
      (
       \{
         x\in Q_m
         :
         \Indicator_{S_m}(x) \neq \Indicator_{T_m}(x)
       \}
      ) \\
  & = \mu(S_m \triangle T_m)
    \leq C \gamma^\alpha N^{-\alpha / 2}.
\end{align*}

Next, we define the approximate Heaviside function $H_\delta : \R \to [0,1]$ by
\begin{equation*}
  H_\delta(x)
  := \begin{cases}
      0                 & \text{ if } x \leq 0             \\
      \tfrac{x}{\delta} & \text{ if } 0 \leq x \leq \delta \\
      1                 & \text{ if } x \geq 1.
    \end{cases}
\end{equation*}
Since $H_\delta$ can be realized by a ReLU neural network
(via ${H_\delta(x) = \tfrac{1}{\delta}(\varrho(x) - \varrho(x-\delta))}$),
we next approximate the characteristic function of $T_m$ by an appropriate
approximate Heaviside function applied to $R_\varrho I_N^m(x^{(i)}) - \theta_m x_i$.

To this end, note for $\delta > 0$ and an arbitrary measurable function $\phi : \R^{d-1} \to \R$ that
\begin{align*}
  \big\{
    (t,u) \in \R^{d-1} \times \R
    :
    \Indicator_{\phi(t) \geq u} \neq H_\delta(\phi(t)-u)
  \big\}
  & = \{
        (t,u)
        :
        0 < H_\delta(\phi(t) - u) < 1
      \} \\
  & \subset \{
              (t,u)
              :
              0 \leq\phi(t) - u \leq \delta
            \} \\
  & \subset \{
              (t,u)
              :
              |\phi(t)-u| \leq \delta
            \}.
\end{align*}
Therefore, by picking $\delta = \gamma N^{-1/2}$ and using the tube compatibility of the measure
we see that
\(
  \mu
  (
    \{
      x \in Q_m
      :
      \Indicator_{T_m}(x) \neq R_\varrho J_N^m(x)
    \}
  )
  \leq C \gamma^\alpha N^{-\alpha/2} ,
  \strut
\)
where $J_N^m$ is chosen such that
${R_\varrho J_N^m (x) = H_{\gamma N^{-1/2}}\bigl(R_\varrho I_N^m(x^{(i)}) - \theta_m x_i\bigr)}$.
Note that $0 \leq R_\varrho J_N^m \leq 1$.

\medskip{}

\noindent
\textbf{Step 3. (Localization to patches)}
Next, we want to truncate each realization $R_\varrho J_N^m$ such that it is supported
on $Q_m$ and we want to realize these truncations as ReLU neural networks.
This is based on a simplified version of the argument in \cite[Lemma A.6]{petersen2018optimal}
For the sake of completeness, we recall the construction from \cite[Lemma A.6]{petersen2018optimal}.

Let ${[a,b] = \prod_{i=1}^d [a_i, b_i]}$ be a rectangle in $\R^d$,
let $0 < \eps \leq \frac{1}{2} \min_{i \in \FirstN{d}} (b_i - a_i)$ and define
$[a + \varepsilon, b - \varepsilon] := \prod_{i=1}^d [a_i + \varepsilon, b_i - \varepsilon]$.
Furthermore, define the functions $t_i : \R \to \R$, for $i \in \FirstN{d}$, by
\begin{equation*}
  t_i(u)
  := \begin{cases}
       0                          & \text{ if } u \in \R \setminus [a_i, b_i]                \\
       1                          & \text{ if } u \in [a_i + \varepsilon, b_i - \varepsilon] \\
       \tfrac{u-a_i}{\varepsilon} & \text{ if } u \in [a_i, a_i + \varepsilon]               \\
       \tfrac{b_i-u}{\varepsilon} & \text{ if } u \in [b_i - \varepsilon, b_i],
     \end{cases}
\end{equation*}
and $\eta_\varepsilon : \R^d \times \R \to \R$ by
$\eta_\varepsilon(x,y) = \varrho\big(\sum_{i=1}^d t_i(x_i) + \varrho(y) - d\big)$.
Note that for $y \in [0,1]$, if $x \in [a + \varepsilon, b - \varepsilon]$,
we have ${\eta_\varepsilon(x,y) = \varrho(\varrho(y)) = y}$;
furthermore, if ${x \in \R^d \setminus [a,b]}$,
we have $0 \leq \eta_\varepsilon(x,y) \leq \varrho(d - 1 + \varrho(y) - d) = \varrho(y - 1) = 0$.
This implies for any function $g : \R^d \to [0,1]$ that
\(
  \{
    x \in \R^d
    :
    \eta_\varepsilon(x, g(x)) \neq \Indicator_{[a,b]}(x) \cdot g(x)
  \}
  \subset [a,b] \setminus [a + \varepsilon, b - \varepsilon]
  .
\)
Note additionally that the function $\eta_\varepsilon$ can be implemented by a ReLU neural network
and that $0 \leq t_i \leq 1$, so that $0 \leq \eta_\eps (x,y) \leq \varrho(\varrho(y)) \leq 1$
for all $y \in [0,1]$, by monotonicity of the ReLU.

Returning now to the neural networks constructed in the previous step we distinguish two cases:
First, if the rectangle $Q_m$ has width along some coordinate direction $i$
less than $2\gamma N^{-1/2}$ ($Q_m$ is a ``small rectangle''),
then we see for a suitable (constant) function $g_m : \R^{d-1} \to \R$
that $Q_m \subset T_{g_m, 2\gamma N^{-1/2}}^{(i)}$ and hence
$\mu(Q_m) \leq 2^\alpha C \gamma^\alpha N^{-\alpha/2} \leq 2 d C \gamma^\alpha N^{-\alpha/2}$,
since $\alpha \leq 1$.
We thus choose $L_N^m$ to be a trivial neural network with input dimension $d+1$,
meaning $R_\varrho L_N^m (x,y) = 0$ for all $x \in \R^d$ and $y \in \R$.
We then have
\[
  \mu
  \big(
    \big\{
      x \in \R^d
      :
      \Indicator_{Q_m}(x) R_\varrho J_N^m (x) \neq R_\varrho L_N^m (x, R_\varrho J_N^m (x))
    \big\}
  \big)
  \leq \mu(Q_m)
  \leq 2 d \, C \, \gamma^\alpha \, N^{-\alpha/2} .
\]

Otherwise (if $Q_m$ is a ``large rectangle''), writing $Q_m \!=\! [a,b]$, we have
${\frac{\gamma}{\sqrt{N}} \leq \frac{1}{2} \min_{i \in \FirstN{d}} (b_i \!-\! a_i)}$, and
it is not hard to see that $[a,b] \setminus [a + \gamma N^{-1/2}, b - \gamma N^{-1/2}]$
is contained in the union of $2 d$ tubes of width $\gamma N^{-1/2}$.
Therefore, choosing $L_N^m$ such that $R_\varrho L_N^m = \eta_{\gamma N^{-1/2}}$, we obtain
\[
  \mu
  \big(
    \big\{
      x \in \R^d
      :
      \Indicator_{Q_m}(x) R_\varrho J_N^m(x) \neq R_\varrho L_N^m(x, R_\varrho J_N^m(x))
    \big\}
  \big)
  \leq 2 d C \gamma^\alpha N^{-\alpha/2} .
\]
In both cases, the function $x \mapsto R_\varrho L_N^m (x,R_\varrho J_N^m(x))$ is supported on $Q_m$
and vanishes on the boundary of $Q_m$ (due to continuity).

\medskip{}

\noindent
\textbf{Step 4. (Finishing the construction and error estimate)}
To summarize, on each rectangle $Q_m$ we have
\begin{align*}
  & \mu
    (
      \{
        x\in \R^d
        :
        \Indicator_{\Omega\cap Q_m}(x) \neq R_\varrho L_N^m (x, R_\varrho J_N^m(x))
      \}
    ) \\
  & \leq \mu
         (
          \{
            x\in\R^d
            :
            R_\varrho L_N^m (x, R_\varrho J_N^m(x)) \neq \Indicator_{Q_m}(x) R_\varrho J_N^m(x)
          \}
         ) \\
  & \quad + \mu
            (
              \{
                x \in Q_m
                :
                R_\varrho J_N^m(x) \neq \Indicator_{T_m}(x)
              \}
            )
            + \mu
              (
               \{
                  x \in Q_m
                  :
                  \Indicator_{T_m}(x) \neq \Indicator_\Omega(x)
               \}
              ) \\
  & \leq 2 d C \gamma^\alpha N^{-\alpha/2}
         + C \gamma^\alpha N^{-\alpha/2}
         + C \gamma^\alpha N^{-\alpha/2} \\
  & =    2 (d+1) C \gamma^\alpha N^{-\alpha/2}.
\end{align*}
Now, defining the neural network $I_N$ such that
$R_\varrho I_N(x) := \sum_{m=1}^M R_\varrho L_N^m (x, R_\varrho J_N^m(x))$, we obtain because of
$\Indicator_{\Omega} = \sum_{m=1}^M \Indicator_{\Omega \cap Q_m}$ (almost everywhere) that
\[
  \mu
  \big(
    \big\{
      x \in\R^d
      :
      \Indicator_\Omega(x) \neq R_\varrho I_N(x)
    \big\}
  \big)
  \leq 2 M (d+1) C \gamma^\alpha N^{-\alpha/2}
  = 2 (d+1) (d-1)^{\alpha/2} C M B^\alpha N^{-\alpha/2}
  .
\]
To simplify the estimate, using that $\alpha \leq 1$, we see
$(d+1) (d-1)^{\alpha/2} \leq (d+1)^{3/2} \leq (2d)^{3/2}$, since $d \geq 2$.
Finally, note that $2^{1+3/2}=2^{5/2} < 6$.
Combining these estimates we see that
\(
  2 (d+1) (d-1)^{\alpha/2} C M B^\alpha N^{-\alpha/2}
  \leq 6 C M B^\alpha \, d^{3/2} N^{-\alpha/2}
  .
\)

Additionally, recall from above that $0 \leq R_\varrho J_N^m \leq 1$ for every $m \in \FirstN{M}$.
As seen in Step~3, this implies that $\zeta_m(x) := R_\varrho L_N^m(x, R_\varrho J_N^m(x))$
satisfies $0 \leq \zeta_m(x) \leq 1$ for all $x \in \R^d$.
Since each $\zeta_m$ is supported on $Q_m$ and vanishes on the boundary of $Q_m$,
and since the rectangles $Q_m$ have disjoint interiors,
this implies that $0 \leq R_\varrho I_N \leq 1$ as well.

\paragraph{The architecture:} \ \smallskip

Now let us examine the architecture of each $L_N^m$ in more detail.
For each rectangle $Q_m$, the flowchart of computations performed by each $L_N^m$
can be visually represented as in Figure~\ref{fig:cartoonOfNEtwork}.

\begin{figure}[htb]
\centering
\begin{tikzpicture}
\begin{scope}[every node/.style={rectangle,thick,draw}]
  \node (L00) at (0,0) {$x$};

  \node (L1-1) at (-5,2) {$x$};
  \node (L10) at (0,2) {$\pi^m(x)$};
  \node (L11) at (5,2) {$\widetilde{\pi}^m(x)$};

  \node (L2-1) at (-5,4) {$x$};
  \node (L20) at (0,4) {$R_\varrho I_N^m (\pi^m(x))$};
  \node (L21) at (5,4) {$-\theta_m\widetilde{\pi}^m(x)$};

  \node (L3-1) at (-5,6) {$x$};
  \node (L31) at (2.5,6) {$([R_\varrho I_N^m] \circ \pi^m - \theta_m\widetilde{\pi}^m)(x)$};

  \node (L4-1) at (-5,8) {$x$};
  \node (L41) at (2.5,8) {$R_\varrho J_N^m(x) = H_\delta(([R_\varrho I_N^m] \circ \pi^m - \theta_m\widetilde{\pi}^m)(x))$};

  \node (L50) at (0,10) {$(x,J_N^m(x))$};

  \node (L60) at (0,12) {$L_N^m(x,J_N^m(x))$};
\end{scope}

\begin{scope}[every node/.style={fill=white,rectangle},
              every edge/.style={draw=blue, thick}]
  \path [->] (L00) edge node {$id$} (L1-1);
  \path [->] (L00) edge node {$\pi^m$} (L10);
  \path [->] (L00) edge node {$\widetilde{\pi}^m$} (L11);

  \path [->] (L1-1) edge node {$id$} (L2-1);
  \path [->] (L10) edge node {$I_N^m$} (L20);
  \path [->] (L11) edge node {$t \mapsto -\theta_m t$} (L21);

  \path [->] (L2-1) edge node {$id$} (L3-1);
  \path [->] (L20) edge node {$+$} (L31);
  \path [->] (L21) edge node {$+$} (L31);

  \path [->] (L3-1) edge node {$id$} (L4-1);
  \path [->] (L31) edge node {$H_\delta$} (L41);

  \path [->] (L4-1) edge node {$\hookrightarrow$} (L50);
  \path [->] (L41) edge node {$\hookrightarrow$} (L50);

  \path [->] (L50) edge node {$L_N^m$} (L60);
\end{scope}
\draw [red,dashed] (-7,0.75)--(7,0.75) node[anchor=north west] {Input};
\draw [red,dashed] (-7,4.75)--(7,4.75) node[anchor=north west] {Layer 1};
\draw [red,dashed] (-7,8.75)--(7,8.75) node[anchor=north west] {Layer 2};
\end{tikzpicture}
\caption{Visualization of the neural network $L_N^m$ for the case of a ``large'' rectangle $Q_m$.}
\label{fig:cartoonOfNEtwork}
\end{figure}
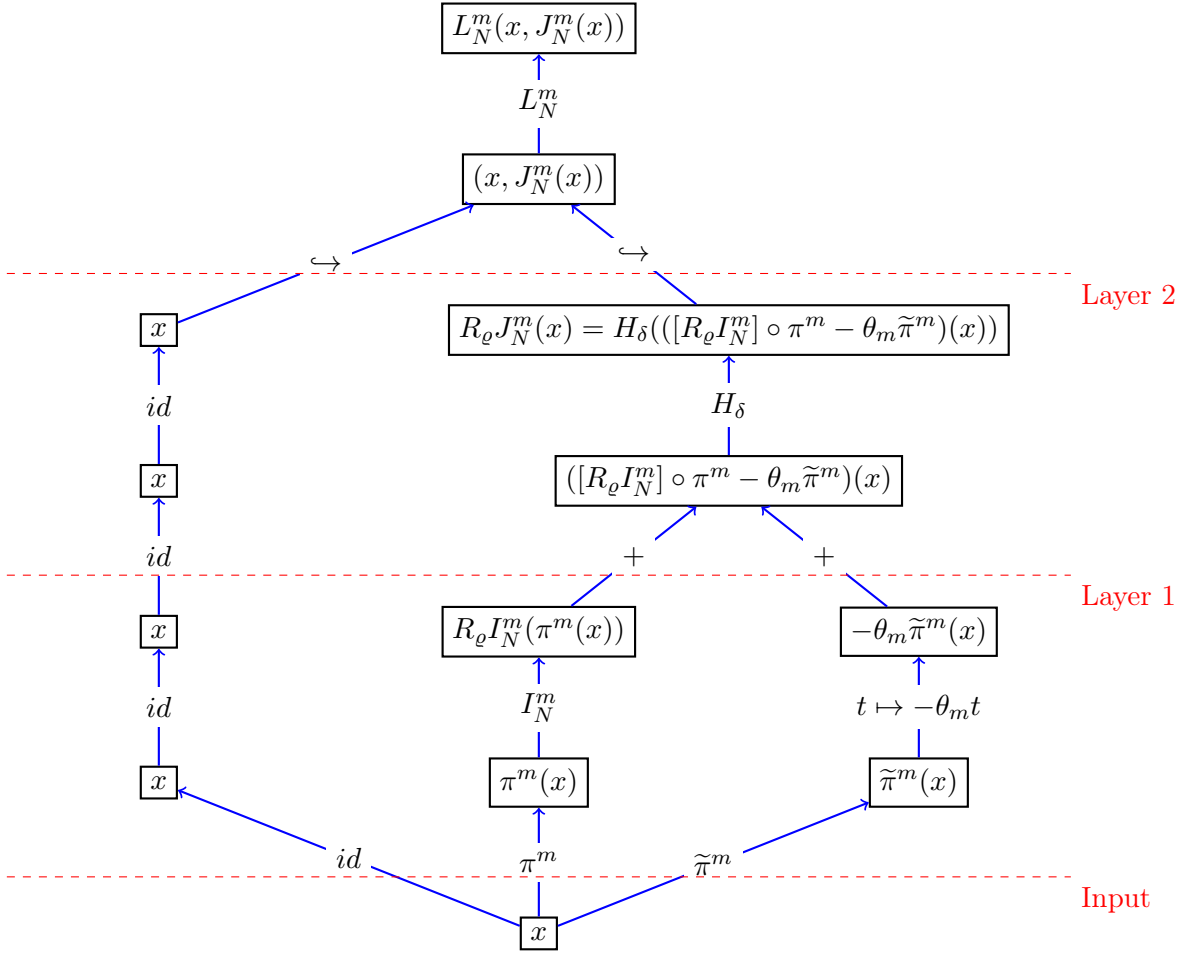

In the following, we explicitly describe each of the layers of the network computing $L_N^m$;
we then describe how these networks are combined to obtain $I_N$.

\medskip{}

\noindent
\textit{Inputs}.
The input layer with $d$ neurons corresponding to the $d$ coordinates of an input $x \in \R^d$.

\medskip{}

\noindent
\textit{Layer 1}. This layer will contain $N + 2d + 2$ neurons split into $3$ categories:
\begin{itemize}
  \item $2d$ neurons computing $\varrho(x_i)$ and $\varrho(-x_i)$
        respectively for each $i \in \FirstN{d}$.

  \item $N$ neurons corresponding to the neurons in the hidden layer of the networks $I_N^m$.
        Explicitly, writing
        $R_\varrho I_N^m(x) = D + \sum_{k=1}^N C_k \, \varrho(B_k + \langle A_k, x \rangle)$
        with $D, B_k, C_k \in \R$ and $A_k \in \R^{d-1}$ for $k \in \FirstN{N}$,
        the $k$-th of these neurons will compute
        $\phi_k(x) = \varrho\bigl(B_k + \langle (\pi^m)^T A_k, x \rangle\bigr)$, 
        where $\pi^m$ is the projection that sends $x$ to $x^{(i)}$
        (with $i = i(m)$), viewed as a $(d-1) \times d$ matrix. 

  \item $2$ neurons computing $\varrho(\pm\theta_m \widetilde{\pi}^m(x))$, respectively,
        where $\widetilde{\pi}^m$ is the projection that sends $x$ to $x_i$ (where $i = i(m)$),
        viewed as a $1 \times d$ matrix.
\end{itemize}

\medskip{}

\noindent
\textit{Layer 2}. This layer will contain $4d+2$ neurons split into $2$ categories:
\begin{itemize}
  \item $4$ neurons for each coordinate $i\in\FirstN{d}$ computing the building blocks
        for the $t_i$ functions in Step 3:
        $t_i^1(u_i) = \varrho(u_i - a_i)$,
        $t_i^2(u_i) = \varrho(u_i - a_i - \varepsilon)$,
        $t_i^3(u_i) = \varrho(u_i - b_i + \varepsilon)$
        and $t_i^4(u_i) = \varrho(u_i - b_i)$, where $u_i := \varrho(x_i) - \varrho(-x_i) = x_i$.
        Note that $t_i(u_i) = \tfrac{t_i^1 - t_i^2 - t_i^3 + t_i^4}{\varepsilon}(u_i)$.
        Furthermore, recall that we chose $\eps = \gamma N^{-1/2}$.

  \item $2$ neurons computing the parts of the approximate Heaviside function $H_\delta$,
        computing, respectively,
        \[
          \psi_1(x)
          := \varrho
             \Big(
               D
               + \sum_{k=1}^{N}
                   C_k \phi_k(x)
               - \varrho\bigl(\theta_m \widetilde{\pi}^m(x)\bigr)
               + \varrho\bigl(-\theta_m \widetilde{\pi}^m(x)\bigr)
             \Big)
        \]
        and
        \[
          \psi_2(x)
          := \varrho
             \Big(
               D
               + \sum_{k=1}^{N}
                   C_k \phi_k(x)
               - \varrho\bigl(\theta_m \widetilde{\pi}^m(x)\bigr)
               + \varrho\bigl(-\theta_m \widetilde{\pi}^m(x)\bigr)
               - \delta
             \Big),
        \]
        where we recall from above that
        $R_\varrho I_N^m (\pi^m (x)) = D + \sum_{k=1}^{N} C_k \phi_k(x)$
        and $\delta = \gamma N^{-1/2}$.
        Therefore,
        \(
          \frac{1}{\delta} (\psi_1(x) - \psi_2(x))
          = H_\delta
            \bigl(
              R_\varrho I_N^m (\pi^m (x))
              - \theta_m \widetilde{\pi}^m (x)
            \bigr)
          = R_\varrho J_N^m (x) ;
        \)
        in particular, $\psi_1(x) - \psi_2(x) \geq 0$.
\end{itemize}

\medskip{}

\noindent
\textit{Layer 3}.
This layer will have a single neuron, either computing the zero function
(in the case of a ``small rectangle'' $Q_m$), or (in the case of a ``large rectangle'') computing
\[
  \eta_\eps (x, R_\varrho J_N^m(x))
  = R_\varrho L_N^m(x,R_\varrho J_N^m(x))
  = \varrho
    \bigg(
    \tfrac{1}{\varepsilon}
      \sum_{i=1}^d
        \bigl(t_i^1 - t_i^2 - t_i^3 + t_i^4\bigr)(x_i)
    + \tfrac{1}{\delta} \bigl(\psi_1(x) - \psi_2(x)\bigr)
    - d
    \bigg) .
\]
We used here that $(\psi_1 - \psi_2)(x) \geq 0$, so the difference is invariant under $\varrho$.

Now, the full network $I_N$ can be realized with one more layer (the output layer),
so that $R_\varrho I_N(x) = \sum_{m=1}^M R_\varrho L_N^m(x, R_\varrho J_N^m (x))$.

Thus, $I_N$ can be realized by a ReLU neural network with $3$ hidden layers,
architecture ${\CalA = \big(d, \, M(N+2d+2), \, M(4d+2), \, M, \, 1\big)}$,
and $d + 1 + M (N + 6d + 5)\leq 7 M (N + d)$ neurons.

Now let us estimate the number of non-zero weights of $I_N$ which we will denote by $W(I_N)$.
An immediate bound can be found by taking the product of the number of neurons
on every pair of consecutive layers in the $L_N^m$ networks, summing up over the layers,
multiplying by $M$, adding $M$ to account for the weights of the final output layer,
and finally adding the total number of non-input neurons to account for the biases.
We thus see
\[
  W(I_N)
  \leq M \cdot
      \big(
        d(N + 2d + 2)
        + (N + 2d + 2)(4d + 2)
        + (4d + 2) \cdot 1
      \big)
      + M
      + M N
      + 6 M d
      + 5 M
      + 1
  ,
\]
so that a rough estimate shows $W(I_N) \leq 54 M d^2 \, N$.

\paragraph{Bounding the magnitude of the weights and biases:} \ \smallskip

Let us now acquire an upper bound for the absolute value of the weights and biases of $I_N$.
Note first of all that for the networks $I_N^m$ we have two cases depending on the size
of the corresponding rectangle $Q_m = \prod_{i=1}^d [a_i,b_i]$:
\begin{itemize}
  \item If $\min_i(b_i-a_i) < 2 \gamma N^{-1/2}$,
        we can set all weights of the ``subnetwork'' corresponding to the rectangle $Q_m$
        to be zero.

  \item If $\min_i (b_i - a_i) \geq 2 \gamma N^{-1/2}$, then by \Cref{rem:explanationofVartheta},
        we have $\vartheta(Q_m, q_m) \leq \gamma^{-1} N^{1/2}$.
        Since furthermore $\| q_m \|_{\ell^1} \leq (d-1) R$, our choice of $I_N^m$
        in Step~1 ensures that the weights and biases of $I_N^m$ are bounded by
        \(
          \sqrt{B} \cdot (6 + \vartheta(Q_m, q_m) + \| q_m \|_{\ell^1})
          \leq \sqrt{B} \cdot (6 + \gamma^{-1} N^{1/2} + d R) .
        \)
\end{itemize}

In either case, we see that the weights and biases on the first layer are bounded in absolute
value by $1 + \sqrt{B} \cdot (6 + \gamma^{-1} N^{1/2} + d R)$.

For the second layer, the weights corresponding to the first $4d$ neurons are bounded
by $1 + \eps + R$ and for the last $2$ neurons again by 
$1 + \sqrt{B} \cdot (6 + \gamma^{-1} N^{1/2} + d R)$.
Finally for the third layer, the weights and biases are bounded by
$\max(\tfrac{1}{\varepsilon}, \tfrac{1}{\delta}, d) \leq d + \gamma^{-1} N^{1/2}$.

In conclusion, the weights of $I_N$ will have magnitudes bounded by
\begin{align*}
  & \max
    \big\{
      1 + 6 \sqrt{B} + \sqrt{B} \gamma^{-1} N^{1/2} + \sqrt{B} d R, \,\,
      1 + \eps + R, \,\,
      d + \gamma^{-1} N^{1/2}
    \big\} \\
  & \leq d (4 + R) (1 + B) + \sqrt{N} \cdot \bigl(B^{-1} + B^{-1/2}\bigr) .
\end{align*}
Here, we used that $d \geq 2$, combined with several elementary estimates including the bound
$\sqrt{B} \leq 1 + B$.
\end{proof}


\section{Lower bounds for approximating sets with Barron class boundary}%
\label{sec:LowerBounds}


In this section, we present a lower bound on the achievable minimax rate
for approximating functions $f \in \BarronBoundary_{B,M}(\R^d)$
in $L^1$ with respect to the Lebesgue measure on $[-1,1]^d$.
In fact, we show that the approximation rate provided by \Cref{thm:BarronBoundaryApproxGuarantee}
is \emph{almost optimal} even when only horizon functions
with boundary from the Fourier-analytic Barron space are considered.
More precisely, we will see in \Cref{thm:QuantizedLowerBound} below that
neural networks with $W$ weights cannot obtain a better approximation error
than $\CalO(W^{-\frac{1}{2} - \frac{1}{d-1}})$ over the class of horizon functions
with boundary from the Fourier-analytic Barron space; in contrast, our upper bound
from \Cref{thm:BarronBoundaryApproxGuarantee} guarantees that an approximation error of
$\CalO(W^{-1/2})$ is achievable.
Thus, even though the two rates of approximation do not precisely agree,
the difference between them vanishes for increasing input dimension $d \to \infty$;
therefore, we speak of \emph{almost optimality}.

Since the arguments in this section are heavily based on the Fourier transform,
we start by fixing its normalization.
Concretely, for $f \in L^1(\R^d)$, we define
\[
  \Fourier f(\xi)
  = \widehat{f} (\xi)
  = (2\pi)^{-d/2}
    \int_{\R^d}
      f(x) e^{-i \langle x, \xi \rangle}
    \, d x ,
\]
so that the inverse Fourier transform is given by
$\Fourier^{-1} f (x) =\widehat{f}(-x)$; see e.g.\ \mbox{\cite[Section~4.3.1]{EvansPDE}}.

Our first step towards lower bounds is to relate the covering numbers of certain
sets of horizon functions to covering numbers of certain subsets of Besov spaces.
To fix the terminology, recall that if $\Theta$ is a subset of a normed vector space $X$,
then a set $\emptyset \neq M \subset X$ is called an $\eps$-net for $\Theta$ (in $X$),
if $\sup_{x \in \Theta} \inf_{m \in M} \| x - m \|_X \leq \eps$.

\begin{proposition}\label{prop:HorizonFunctionsCoveringNumbers}
  Let $d \in \N_{\geq 2}$ and $\Omega := (-1,1)^{d-1}$.
  Given a function $f : \Omega \to \R$, define the associated \emph{horizon function} as
  \[
    H_f : \quad
    (-1,1)^{d-1} \times (-1,1) \to \{ 0,1 \}, \quad
    (x,y) \mapsto \Indicator_{y \leq f(x)} .
  \]

  For each $s > \frac{d+1}{2}$ and $C > 0$, there is a constant $\lambda = \lambda(d,s,C) > 0$
  with the following property:
  If $\eps > 0$ and if $M \subset L^1 \bigl( (-1,1)^d \bigr)$ is an $\eps$-net
  (in $L^1 ( (-1,1)^d )$) for
  \[
    \HorizonFunctions (\Barron_C)
    := \bigl\{ H_f \,\,\colon\,\, f \in \Barron_C ( [-1,1]^{d-1} ) \bigr\} ,
  \]
  then there exists a set $M' \subset B_0^{1,\infty}(\Omega)$ satisfying $|M'| \leq |M|$
  which is a $\lambda\eps$-net (in $B_0^{1,\infty}(\Omega)$) for
  \(
    \CalG
    := \bigl\{
         f \in B_s^{2,2} (\Omega)
         \colon
         \| f \|_{B_s^{2,2}} \leq 1
       \bigr\} .
  \)
\end{proposition}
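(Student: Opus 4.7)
The overall strategy is to convert any $L^1$-net for horizon functions into a $B_0^{1,\infty}$-net for the Besov unit ball by means of a linear ``averaging'' operator that reconstructs a boundary function $f$ from its horizon function $H_f$, together with an embedding of a rescaled Besov ball into the Barron class.

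\emph{Step 1 (Recovering $f$ from $H_f$).} Define the averaging operator
\[
  T : L^1((-1,1)^d) \to L^1(\Omega), \qquad
  (Tm)(x) := \int_{-1}^1 m(x,y)\, dy - 1 .
\]
By Fubini and the triangle inequality, $\| T m_1 - T m_2 \|_{L^1(\Omega)} \leq \| m_1 - m_2 \|_{L^1((-1,1)^d)}$, so $T$ is a contraction. A direct computation shows that for any measurable $f : \Omega \to [-1,1]$ one has $(T H_f)(x) = f(x)$ for a.e.\ $x$.

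\emph{Step 2 (Embedding the Besov ball into a rescaled Barron class).} This is the key technical step. I would produce a constant $\sigma = \sigma(d,s,C) > 0$ with the property that, for every $g \in \CalG$, both $\sigma g \in \Barron_C([-1,1]^{d-1})$ (with base point $x_0 = 0$) and $\| \sigma g \|_{L^\infty(\Omega)} \leq 1$ hold. This combines: (i) a bounded extension operator $E : B_s^{2,2}(\Omega) \to H^s(\R^{d-1})$; (ii) the Sobolev embedding $H^s(\R^{d-1}) \hookrightarrow L^\infty$ valid for $s > (d-1)/2$, giving $\| g \|_{L^\infty(\Omega)} \leq K(s,d)$; and (iii) the Cauchy--Schwarz estimate
\[
  \int_{\R^{d-1}} |\xi|\, |\widehat{Eg}(\xi)| \, d\xi
  \leq \Bigl( \int_{\R^{d-1}} \tfrac{|\xi|^2}{(1+|\xi|^2)^s}\, d\xi \Bigr)^{1/2}
       \cdot \| Eg \|_{H^s(\R^{d-1})} ,
\]
whose first factor is finite \emph{precisely} when $s > (d+1)/2$. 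Fourier inversion then supplies a representation of the form \eqref{eq:FourierRepresentation} for $\sigma g$, and taking $\sigma$ small enough relative to the product of the constants from (i)--(iii) yields simultaneously the Barron bound $C$ and the sup-norm bound $1$.

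\emph{Step 3 (Assembling the net).} For each $g \in \CalG$, Step 2 gives $H_{\sigma g} \in \HorizonFunctions(\Barron_C)$, so the assumption on $M$ yields $m_g \in M$ with $\| H_{\sigma g} - m_g \|_{L^1} \leq \eps$. Applying $T$ and Step 1,
\[
  \| g - \sigma^{-1} T m_g \|_{L^1(\Omega)}
  = \sigma^{-1} \, \| T(H_{\sigma g}) - T m_g \|_{L^1(\Omega)}
  \leq \sigma^{-1} \eps .
\]
Set $M' := \{ \sigma^{-1} T m : m \in M \} \subset L^1(\Omega)$, so that $|M'| \leq |M|$. The classical continuous embedding $L^1(\Omega) \hookrightarrow B_0^{1,\infty}(\Omega)$, with some constant $\kappa_0 = \kappa_0(d)$, then promotes $M'$ to a $\lambda \eps$-net for $\CalG$ in $B_0^{1,\infty}(\Omega)$ with $\lambda := \kappa_0 \sigma^{-1}$.

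\emph{Main obstacle.} The crux is Step 2: showing quantitatively that a Sobolev function of high enough smoothness (after extension) has finite first Fourier moment, with a constant that can be absorbed into the scaling $\sigma$. The threshold $s > (d+1)/2$ is exactly what forces convergence of the Cauchy--Schwarz integral in (iii), which is how the hypothesis enters. The remaining bookkeeping---choice of base point $x_0 = 0$, handling of the extension onto $\R^{d-1}$, and the range constraint $\| \sigma g \|_\infty \leq 1$ needed in Step 1---is routine but must be executed so that $\lambda$ depends only on $d$, $s$, and $C$.
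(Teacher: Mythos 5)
Your argument is correct, and its core analytic content coincides with the paper's: the embedding $L^1(\Omega) \hookrightarrow B_0^{1,\infty}(\Omega)$, and the rescaled embedding of the Besov ball $\CalG$ into $\Barron_C([-1,1]^{d-1])$ via extension, the Sobolev/Besov identification $B_s^{2,2} = H^{s,2}$, and a Cauchy--Schwarz estimate on the first Fourier moment whose convergence is exactly the condition $s > \frac{d+1}{2}$ (the paper executes this in its Step~2, with the sup-norm bound also coming from Fourier inversion). Where you genuinely diverge is in how the net is transferred: the paper proves the comparison inequality $\| H_f - H_g \|_{L^1((-1,1)^d)} \geq \| f - g \|_{L^1(\Omega)}$ for boundary functions with values in $[-\frac12,\frac12]$, and then builds $M'$ by selecting, for each $G_i \in M$, a near-optimal $f_i \in \CalG$ with $\| H_{c f_i} - G_i \|_{L^1} \leq \eps + \inf_{f \in \CalG} \| H_{c f} - G_i \|_{L^1}$, paying an extra triangle-inequality factor. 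You instead use the fixed linear vertical-averaging operator $T m (x) = \int_{-1}^1 m(x,y)\,dy - 1$, which is an $L^1$-contraction satisfying $T H_f = f$ whenever $|f| \leq 1$, and take $M' := \{ \sigma^{-1} T m : m \in M \}$. This is a clean alternative: since $M'$ is the image of $M$ under a fixed contraction, the net property transfers by taking infima directly, with no near-optimal selection and a slightly better constant; it also applies verbatim to net elements $m \in M$ that are not themselves horizon functions (the paper handles this through the triangle inequality instead). The paper's route, in exchange, produces a net consisting of elements of $\CalG$ itself, which is not needed for the statement. Your Step~2 is only sketched, but the ingredients you name (bounded extension, $H^s \hookrightarrow L^\infty$, Cauchy--Schwarz, and absorbing all constants plus the passage from $|\xi|$ to $|\xi|_{[-1,1]^{d-1},0} = \|\xi\|_{\ell^1}$ and the base-point constant $|f(x_0)|$ into the scaling $\sigma$) are exactly what is required, so I see no gap.
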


\begin{remark}
  Here, for an open set $\Omega \subset \R^{d-1}$, we use the definition of the Besov spaces
  $B_s^{p,q}(\Omega)$ as in \cite[Section~2.5.1]{EdmundsTriebelEntropyNumbers}, that is,
  \(
    B_s^{p,q} (\Omega)
    = \bigl\{ f|_\Omega \,\,\colon f \in B_s^{p,q}(\R^{d-1}) \bigr\}
  \)
  with norm
  \[
    \| f \|_{B_s^{p,q}(\Omega)}
    := \inf
       \big\{
         \| g \|_{B_s^{p,q}(\R^{d-1})}
         \colon
         g \in B_s^{p,q}(\R^{d-1}) \text{ and } f = g|_{\Omega}
       \big\} .
  \]
\end{remark}

\begin{proof}
We divide the proof into four steps.

\medskip{}

\noindent
\textbf{Step 1:}
For completeness, we prove the well-known embedding
$L^1(\Omega) \hookrightarrow B_0^{1,\infty}(\Omega)$.
Clearly, it is enough to prove $L^1(\R^{d-1}) \hookrightarrow B_0^{1,\infty}(\R^{d-1})$.
To this end, recall from \cite[Section~2.2.1]{EdmundsTriebelEntropyNumbers}
that the norm on $B_0^{1,\infty}(\R^{d-1})$ is given by
\[
  \| f \|_{B_0^{1,\infty}}
  = \sup_{j \in \N_0}
      \Big\| \Fourier^{-1} \bigl[ \varphi_j \, \widehat{f} \,\, \bigr] \Big\|_{L^1} ,
\]
where $\varphi_0, \varphi \in \Schwartz(\R^{d-1})$ are suitably chosen
and $\varphi_k (\xi) = \varphi_1(2^{-k+1} \xi)$ for $k \in \N$.
Note that
\(
  \| \Fourier^{-1} \varphi_k \|_{L^1}
  = \big\| 2^{(k-1) (d-1)} (\Fourier^{-1} \varphi_1)(2^{k-1} \cdot) \big\|_{L^1}
  = \| \Fourier^{-1} \varphi_1 \|_{L^1} ,
\)
whence $\| \Fourier^{-1} \varphi_k \|_{L^1} \leq C_1 < \infty$ for all $k \in \N_0$.
By Young's inequality for convolutions, this implies
\[
  \big\| \Fourier^{-1} \bigl[ \varphi_j \, \widehat{f} \,\, \bigr] \big\|_{L^1}
  = C_0 \, \big\| (\Fourier^{-1} \varphi_j ) \ast f \big\|_{L^1}
  \leq C_1' \cdot \| f \|_{L^1} ,
\]
so that $\| f \|_{B_0^{1,\infty}} \leq C_1 ' \cdot \| f \|_{L^1}$ for $f \in L^1(\R^{d-1})$,
with $C_1 ' = C_1 ' (d) > 0$.

\medskip{}

\noindent
\textbf{Step 2:}
We show existence of $c = c(s,d,C) > 0$ such that every $f \in \CalG$ satisfies
${\| c f \|_{\sup} \leq \frac{1}{4}}$ and $c f \in \Barron_C([-1,1]^{d-1})$.
We remark that this inclusion was (up to minute differences) already observed in
\cite[Example~15 on Page~941]{BarronUniversalApproximation}
and \cite[Theorem~3.1]{wojtowytsch2020representation}.
We provide the proof here for the sake of completeness.

To this end, we first prove
$\int_{\R^{d-1}} (1 + |\xi|) \, |\widehat{f}(\xi)| \, d \xi \leq C_2 \cdot \| f \|_{B_s^{2,2}}$
for all $f \in B_s^{2,2}(\R^{d-1})$, for a suitable constant $C_2 = C_2(s, d) > 0$.
First, recall from \cite[Sections~2.2.2, 2.3.2, and 2.5.6]{TriebelFunctionSpaces1}
the well-known identity $B_s^{2,2}(\R^{d-1}) = F_s^{2,2}(\R^{d-1}) = H^{s,2}(\R^{d-1})$,
where the norm on the Sobolev space $H^{s,2}(\R^{d-1})$ is given by
$\| f \|_{H^{s,2}}^2 = \int_{\R^{d-1}} (1 + |\xi|^2)^s \, |\widehat{f} (\xi)|^2 \, d \xi$.
Using the Cauchy-Schwarz inequality, we therefore see for $f \in B_s^{2,2}(\R^{d-1})$ that
\begin{align*}
  \int_{\R^{d-1}}
    \bigl(1 + |\xi|\bigr) \, |\widehat{f} (\xi)|
  \, d \xi
  & \lesssim \int_{\R^{d-1}}
               \bigl(1 + |\xi|^2\bigr)^{\frac{1-s}{2}} \,
               \bigl(1 + |\xi|^2\bigr)^{s/2} \,
               |\widehat{f}(\xi)|
             \, d \xi \\
  & \leq \Big(
           \int_{\R^{d-1}}
             \bigl(1 + |\xi|^2\bigr)^{1 - s}
           \, d \xi
         \Big)^{1/2}
         \Big(
           \int_{\R^{d-1}}
             \bigl(1 + |\xi|^2\bigr)^{s} \, |\widehat{f}(\xi)|^2
           \, d \xi
         \Big)^{1/2} \\
  & \lesssim \| f \|_{H^{s,2}} \lesssim \| f \|_{B_s^{2,2}} .
\end{align*}
Here, we used that $2 \cdot (1 - s) < - (d-1)$,
so that $\int_{\R^{d-1}} (1 + |\xi|^2)^{1-s} \, d \xi < \infty$.

Now, by definition of $\calG$ and of $B_s^{2,2}(\Omega)$, for each $f \in \calG$,
there is $F \in B_s^{2,2}(\R^{d-1})$ with $f = F|_{\Omega}$ and $\| F \|_{B_s^{2,2}} \leq 2$.
As seen above, this entails
$\int_{\R^{d-1}} (1 + |\xi|) \, |\widehat{F}(\xi)| \, d \xi \leq 2 C_2$.
On the one hand, this implies by Fourier inversion for all $x \in \Omega$ that
\[
  |f(x)|
  = |F(x)|
  = |(\Fourier^{-1} \widehat{F})(x)|
  \leq (2\pi)^{-(d-1)/2} \int_{\R^{d-1}} |\widehat{F}(\xi)| \, d \xi
  \leq (2\pi)^{-(d-1)/2} 2 C_2
  \leq 2 C_2 .
\]
On the other hand,
\(
  f(x)
  = F(x)
  = \Fourier^{-1} \widehat{F} (x)
  =\!
    \int_{\R^{d-1}}
      e^{i \langle x,\xi \rangle}
      (2\pi)^{-\frac{d-1}{2}} \widehat{F}(\xi)
    \, d \xi
\)
for ${x \in [-1,1]^{d-1}}$ and, in the notation of \Cref{def:BarronClassFunction},
\({
  |\xi|_{[-1,1]^{d-1}, 0}
  = \sup_{x \in [-1,1]^{d-1}}
      |\langle \xi, x \rangle|
  = \| \xi \|_{\ell^1}
  \lesssim |\xi| ,
}\)
meaning
\(
  \int_{\R^{d-1}}
    |\xi|_{[-1,1]^{d-1},0} \cdot (2\pi)^{-(d-1)/2} |\widehat{F}(\xi)|
  \, d \xi
  \lesssim C_2
  \lesssim 1.
\)
By combining these observations, it is easy to see $\| c f \|_{\sup} \leq \frac{1}{4}$
and $c f \in \Barron_C ([-1,1]^{d-1})$, for $c = c(s,d,C) > 0$ small enough.


\medskip{}

\noindent
\textbf{Step 3:}
We show that
\[
  \| H_f - H_g \|_{L^1 ( (-1,1)^d )}
  \geq \| f - g \|_{L^1 (\Omega)}
  \qquad \forall \, f,g : \Omega \to \bigl[-\tfrac{1}{2}, \tfrac{1}{2}\bigr] \text{ measurable} .
\]
To see this, first note by Fubini's theorem that
\begin{align*}
  \| H_f - H_g \|_{L^1 ( (-1,1)^d )}
  & = \int_{(-1,1)^{d-1}}
        \int_{-1}^1
          |\Indicator_{y \leq f(x)} - \Indicator_{y \leq g(x)}|
        \, d y
      \, d x \\
  & \overset{(\ast)}{=} \int_{(-1,1)^{d-1}} |f(x) - g(x)| \, d x
    = \| f - g \|_{L^1 (\Omega)} .
\end{align*}
Here, the step marked with $(\ast)$ used that
\(
  \int_{-1}^1
    |\Indicator_{y \leq f(x)} - \Indicator_{y \leq g(x)}|
  \, d y
  = |f(x) - g(x)| ,
\)
which is trivial if $f(x) = g(x)$.
Otherwise, if $f(x) > g(x)$, then
\[
  \bigl|\Indicator_{y \leq f(x)} - \Indicator_{y \leq g(x)}\bigr|
  = \Indicator_{(g(x), f(x))} (y) ,
\]
which implies the claimed estimate.
Here, we implicitly used that $(g(x), f(x)) \subset [-1,1]$,
since $f(x), g(x) \in [-\frac{1}{2}, \frac{1}{2}]$.
For $f(x) < g(x)$, one can argue similarly.

\medskip{}

\noindent
\textbf{Step 4:} We complete the proof.
To this end, write $N := |M|$ and ${M = \{ G_1, \dots, G_N \}}$.
With $c > 0$ as in Step~2, for each $i \in \FirstN{N}$, choose $f_i \in \CalG$ with
\begin{equation}
  \| H_{c f_i} - G_i \|_{L^1}
  \leq \eps + \inf_{f \in \CalG} \| H_{c f} - G_i  \|_{L^1} .
  \label{eq:HorizonFunctionsCoveringNumbersAlmostOptimality}
\end{equation}
We claim that $M' := \{ f_1, \dots, f_N \} \subset L^1(\Omega) \subset B_0^{1,\infty}(\Omega)$
is a $\lambda \eps$-net for $\CalG$ (in $B_0^{1,\infty}(\Omega)$), for a suitable choice of
$\lambda = \lambda(d,s,C) > 0$.

To see this, let $f \in \CalG$ be arbitrary.
By Step~2, we have $c f \in \Barron_C ([-1,1]^{d-1})$
and hence $H_{c f} \in \HorizonFunctions(\Barron_C)$.
Since $M$ is an $\eps$-net for $\HorizonFunctions(\Barron_C)$ (in $L^1 ( (-1,1)^d)$),
this implies that there exists $i \in \FirstN{N}$ with $\| H_{c f} - G_i \|_{L^1} \leq 2 \eps$.
Since $f, f_i \in \CalG$ and hence $\| c \, f \|_{\sup}, \| c \, f_i \|_{\sup} \leq \frac{1}{2}$
by Step~2, the estimates from Steps~1 and 3 show
\begin{align*}
  \| f - f_i \|_{B_0^{1,\infty}(\Omega)}
  & \leq C_1' \, \| f - f_i \|_{L^1(\Omega)}
    =    \frac{C_1'}{c} \,
         \big\| c \, f - c \, f_i \big\|_{L^1(\Omega)}
    \leq \frac{C_1'}{c} \,
         \big\| H_{c f} - H_{c f_i} \big\|_{L^1 ( (-1,1)^d)} \\
  & \leq \frac{C_1'}{c}
         \big( \| H_{c f} - G_i \|_{L^1} + \| G_i - H_{c f_i} \|_{L^1} \big) \\
  & \overset{(\ast\ast)}{\leq}
         \frac{C_1'}{c}
         \big( \| H_{c f} - G_i \|_{L^1} + \eps + \| H_{c f} - G_i \|_{L^1} \big)
    \leq \frac{5 C_1'}{c} \eps .
\end{align*}
Here, the step marked with $(\ast\ast)$ is justified
by \Cref{eq:HorizonFunctionsCoveringNumbersAlmostOptimality}.
\end{proof}

Based on \Cref{prop:HorizonFunctionsCoveringNumbers}, we can now prove our first lower bound
for the approximation of Barron-class horizon functions.
This result uses the notion of \emph{$(\tau,\eps)$-quantized networks}
introduced in \mbox{\cite[Definition~2.9]{petersen2018optimal}}.
Precisely, given $\tau \in \N$ and $\eps \in (0,\frac{1}{2})$, we say that a network $\Phi$
is $(\tau,\eps)$-quantized, if all the weights and biases of $\Phi$ belong to the set
$[-\eps^{-\tau}, \eps^{-\tau}] \cap 2^{-\tau \lceil \log_2 (1/\eps) \rceil} \Z$.
Similar notions of quantized networks have been employed in
\cite{boelcskei2019optimal, ElbraechterDNNApproximationTheory}
in the context of lower bounds on approximation rates.

\begin{theorem}\label{thm:QuantizedLowerBound}
  Let $d \in \N_{\geq 2}$, $\tau \in \N$, and $C, \sigma > 0$.
  With notation as in \Cref{prop:HorizonFunctionsCoveringNumbers}, assume that
  there are $C_1,C_2 > 0$ and a null-sequence $(\eps_n)_{n \in \N} \subset (0,\infty)$
  such that for every ${f \in \Barron_C ([-1,1]^{d-1})}$ and $n \in \N$,
  there is a network $\Phi$ with $d$-dimensional input and $1$-dimensional output,
  with $(\tau, \eps_n)$-quantized weights, and such that
  \[
    \| H_f - R_\varrho \Phi \|_{L^1 ( (-1,1)^d)} \leq C_1 \, \eps_n
    \qquad \text{and} \qquad
    W(\Phi) \leq C_2 \cdot \eps_n^{-\sigma} .
  \]
  Then $\frac{1}{\sigma} \leq \frac{1}{2} + \frac{1}{d-1}$.
\end{theorem}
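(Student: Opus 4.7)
The plan uses a standard metric-entropy/counting argument: the approximation hypothesis produces a small $\eps_n$-net for horizon functions with Barron boundary in $L^1((-1,1)^d)$; Proposition~\ref{prop:HorizonFunctionsCoveringNumbers} transports this net to a net for the unit ball $\mathcal{G}$ of $B_s^{2,2}(\Omega)$ inside $B_0^{1,\infty}(\Omega)$, and comparing the cardinality of the transported net against classical covering-number lower bounds for Besov balls forces $\sigma$ to be large.

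In detail, for each $n$ let $\mathcal{N}_n$ denote the set of realizations $R_\varrho \Phi$, where $\Phi$ ranges over all $(\tau,\eps_n)$-quantized networks with $d$-dimensional input, $1$-dimensional output, and $W(\Phi) \leq C_2 \eps_n^{-\sigma} =: W_n$. By hypothesis, $\mathcal{N}_n$ is a $C_1 \eps_n$-net for $\HorizonFunctions(\Barron_C)$ in $L^1((-1,1)^d)$. I would upper-bound $|\mathcal{N}_n|$ by an elementary enumeration: the quantization grid $[-\eps_n^{-\tau},\eps_n^{-\tau}] \cap 2^{-\tau\lceil\log_2(1/\eps_n)\rceil}\Z$ has at most $\CalO(\eps_n^{-2\tau})$ points, and a standard combinatorial count of architectures with at most $W_n$ non-zero weights contributes an additional $W_n^{\CalO(W_n)}$ factor. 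Hence
\[
  \log |\mathcal{N}_n|
  \leq C_3 \, W_n \log(1/\eps_n)
  \leq C_4 \, \eps_n^{-\sigma} \log(1/\eps_n) .
\]

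Next, for any fixed $s > (d+1)/2$, Proposition~\ref{prop:HorizonFunctionsCoveringNumbers} applied to $\mathcal{N}_n$ with $\eps = C_1 \eps_n$ yields a $\lambda_s C_1 \eps_n$-net $M'_n \subset B_0^{1,\infty}(\Omega)$ for $\mathcal{G}$ with $|M'_n| \leq |\mathcal{N}_n|$. Classical entropy estimates for the compact Besov embedding $B_s^{2,2}(\Omega) \hookrightarrow B_0^{1,\infty}(\Omega)$ on the bounded Lipschitz domain $\Omega = (-1,1)^{d-1}$ (see \cite{EdmundsTriebelEntropyNumbers}) give $\log |M'_n| \geq c_s \, \eps_n^{-(d-1)/s}$ for all sufficiently small $\eps_n$ and a constant $c_s > 0$ depending only on $s, d, C$. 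Chaining the two bounds yields
\[
  c_s \, \eps_n^{-(d-1)/s}
  \leq C_4 \, \eps_n^{-\sigma} \log(1/\eps_n) ;
\]
letting $n \to \infty$ and using $\log(1/\eps_n) = o(\eps_n^{-\delta})$ for every $\delta > 0$ forces $\sigma \geq (d-1)/s$, i.e., $1/\sigma \leq s/(d-1)$. Since $s > (d+1)/2$ was arbitrary, sending $s \downarrow (d+1)/2$ yields $1/\sigma \leq (d+1)/(2(d-1)) = 1/2 + 1/(d-1)$, as required.

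The main obstacle I anticipate is the architecture enumeration step, namely showing that the number of distinct realizations of $(\tau,\eps_n)$-quantized networks with $W(\Phi) \leq W_n$ is at most $2^{\CalO(W_n \log(1/\eps_n))}$. This requires a careful but now-standard combinatorial count over the number of layers, per-layer widths, and sparsity patterns, along the lines of \cite{petersen2018optimal,boelcskei2019optimal}; the logarithmic slack it introduces is harmless because only the polynomial exponent in $\eps_n$ enters the final comparison.
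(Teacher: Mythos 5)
Your proposal is correct and follows essentially the same route as the paper's proof: enumerate the quantized networks to bound the net cardinality by $2^{\CalO(\eps_n^{-\sigma}\log(1/\eps_n))}$ (the paper invokes \cite[Lemma~B.4]{petersen2018optimal} for exactly this count), transport the net to the Besov unit ball via \Cref{prop:HorizonFunctionsCoveringNumbers}, compare with the entropy lower bounds for $B_s^{2,2}\hookrightarrow B_0^{1,\infty}$ from \cite{EdmundsTriebelEntropyNumbers}, and optimize over $s > \frac{d+1}{2}$. The only (cosmetic) difference is that the paper restricts to a ball $\Omega_0 = B_{1/2}(0)$ so as to apply the entropy-number estimates on a bounded $C^\infty$ domain verbatim, whereas you state the equivalent covering-number lower bound directly on the cube; this and your slightly loose grid-cardinality exponent are harmless.
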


\begin{proof}
  Let $\Omega_0 := B_{1/2}(0) = \{ x \in \R^{d-1} \colon |x| < 1/2 \}$, noting that this is a
  bounded $C^\infty$-domain in the sense of \cite[Section~3.2.1]{TriebelFunctionSpaces1},
  and that $\Omega_0 \subset \Omega = (-1,1)^{d-1}$.
  Let us fix $s > \frac{d+1}{2}$ for the moment,
  and define $A := B_s^{2,2}(\Omega_0)$ and $B := B_0^{1,\infty}(\Omega_0)$.
  The proof is based on existing \emph{entropy bounds} for the embedding $A \hookrightarrow B$.
  More precisely, writing $U_A := \{ x \in A \colon \| x \|_A \leq 1 \}$ (and similarly for $U_B$),
  the $k$-th entropy number of this embedding is defined as
  \[
    e_k
    := \inf \Big\{
              \eps > 0
              \quad\colon\quad
              \exists f_1, \dots, f_{2^{k-1}} \in B \text{ such that }
                U_A \subset \bigcup_{i=1}^{2^{k-1}} (f_i + \eps \, U_B)
            \Big\}
    \,\,;
  \]
  see \cite[Definition~1 in Section~1.3.1]{EdmundsTriebelEntropyNumbers}.
  Furthermore, \cite[Theorem~1 in Section~3.3.3]{EdmundsTriebelEntropyNumbers}
  shows that there is a constant $c = c(d,s) > 0$ satisfying
  \[
    e_k \geq c \cdot k^{-s/(d-1)}
    \qquad \forall \, k \in \N .
  \]

  Given a neural network $\Phi$, let us write $d_{\mathrm{in}}(\Phi)$ and $d_{\mathrm{out}}(\Phi)$
  for the input- and output-dimension of $\Phi$, respectively.
  Fix $n \in \N$ with $\eps := \eps_n < 1/2$, and define
  \[
    M_n := \big\{
             R_\varrho \Phi
             \,\,\colon
             \Phi \text{ is $(\tau,\eps)$-quantized NN with }
             d_{\mathrm{in}}(\Phi) = d,
             d_{\mathrm{out}}(\Phi) = 1,
             \text{ and } W(\Phi) \leq C_2 \cdot \eps^{-\sigma}
           \big\} .
  \]
  Note that $\lceil \log_2(1/\eps) \rceil \leq 1 + \log_2(1/\eps) \leq 2 \log_2(1/\eps)$,
  whence $2^{\tau \lceil \log_2(1/\eps) \rceil} \leq 2^{2\tau \log_2(1/\eps)} = \eps^{-2\tau}$.
  Furthermore, note for arbitrary $a,b > 0$ that
  \[
    \big| [-a,a] \cap b \Z \big|
    = \bigl| [-b^{-1}a, b^{-1} a] \cap \Z \bigr|
    \leq 1 + 2 b^{-1} a
    \,\,,
  \]
  which shows
  \[
    \bigl| [-\eps^{-\tau}, \eps^{-\tau}] \cap 2^{-\tau \lceil \log_2 (1/\eps) \rceil} \Z \bigr|
    \leq 1 + 2 \, \eps^{-\tau} \, 2^{\tau \lceil \log_2(1/\eps) \rceil}
    \leq 1 + 2 \eps^{-3\tau}
    \leq \eps^{-5\tau} ,
  \]
  and hence
  \(
    \bigl|
      [-\eps^{-\tau}, \eps^{-\tau}]
      \cap 2^{-\tau \lceil \log_2 (1/\eps) \rceil} \Z
    \bigr|
    \leq 2^{K}
  \)
  for $K := \lceil \log_2(\eps^{-5\tau}) \rceil \leq 6 \tau \log_2(1/\eps)$.
  Therefore, an application of \cite[Lemma~B.4]{petersen2018optimal} shows that
  there is a constant $C_3 = C_3(d) \in \N$ satisfying
  \[
    |M_n|
    \leq 2^{C_3 C_2 \eps^{-\sigma}
         \cdot (\lceil \log_2(C_2 \, \eps^{-\sigma}) \rceil + 6 \tau \log_2(1/\eps) )}
    \leq 2^{C_4 \eps^{-\sigma} \log_2(1/\eps)} ,
  \]
  with $C_4 = C_4(C_2,d,\tau,\sigma) > 0$.

  By assumption of the theorem to be proven and because of our choice $\eps = \eps_n$,
  we see with notation as in \Cref{prop:HorizonFunctionsCoveringNumbers} that
  $M_n$ is a $C_1 \eps$-net (in $L^1 ( (-1,1)^d )$) for $\HorizonFunctions(\Barron_C)$.
  Therefore, with $\lambda = \lambda(d,s,C) > 0$ as in \Cref{prop:HorizonFunctionsCoveringNumbers},
  there is a $\lambda C_1 \eps$-net $M_n' \subset B_0^{1,\infty}(\Omega)$ for
  ${\CalG := \{ f \in B_s^{2,2}(\Omega) \colon \| f \|_{B_s^{2,2}} \leq 1 \}}$
  satisfying $|M_n '| \leq |M_n| \leq 2^{k-1}$
  for $k := 1 + \lceil C_4 \eps^{-\sigma} \log_2(1/\eps) \rceil$.
  Defining $M_n '' := \{ f|_{\Omega_0} \colon f \in M_n' \} \subset B_0^{1,\infty}(\Omega_0)$,
  we thus see that $M_n ''$ is a $\lambda C_1 \eps$-net for $U_A$.

  Overall, we thus see because of $k \leq C_5 \eps^{-\sigma} \log_2(1/\eps)$ that
  \[
    \lambda \, C_1 \, \eps
    \geq e_k
    \geq c \cdot k^{-s/(d-1)}
    \geq c
         \cdot C_5^{-s/(d-1)}
         \cdot \eps^{s \sigma / (d-1)}
         \cdot \bigl(\log_2(1/\eps)\bigr)^{-s/(d-1)} .
  \]
  Note that this holds for all $\eps = \eps_n \to 0$ as $n \to \infty$.
  This is only possible if $s \sigma / (d-1) \geq 1$, meaning
  $\frac{1}{\sigma} \leq \frac{s}{d-1}$.
  Since $s > \frac{d+1}{2}$ can be chosen arbitrarily, this implies as claimed that
  $\frac{1}{\sigma} \leq \frac{1}{2} \frac{d+1}{d-1} = \frac{1}{2} + \frac{1}{d-1}$.
\end{proof}

The strength of the lower bound in \Cref{thm:QuantizedLowerBound} is that it applies to networks
of \emph{arbitrary depth}; but it requires the neural networks to be quantized.
Our final lower bound shows that for neural networks of a fixed maximal depth,
one can replace the quantization assumption by a suitable growth condition
on the magnitude of the weights.

\begin{theorem}\label{thm:LowerBoundWithWeightBound}
  Let $d \in \N_{\geq 2}$, $L,N \in \N$, and $\gamma, C, C_1, C_2, C_3 > 0$.
  Suppose that there is an infinite set $\mathcal{W} \subset \N$
  such that for each $W \in \mathcal{W}$ and each $f \in \Barron_C ([-1,1]^{d-1})$
  there is a neural network $\Phi$ with $d$-dimensional input and $1$-dimensional output
  and with all weights bounded in absolute value by $C_1 \, W^N$ such that
  \[
    \| H_f - R_\varrho \Phi \|_{L^1( (-1,1)^d )} \leq C_2 \cdot W^{-\gamma} ,
    \qquad
    W(\Phi) \leq C_3 \cdot W ,
    \quad \text{and} \quad
    L(\Phi) \leq L .
  \]
  Then $\gamma \leq \frac{1}{2} + \frac{1}{d-1}$.
\end{theorem}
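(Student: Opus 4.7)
The plan is to reduce to Theorem~\ref{thm:QuantizedLowerBound} by means of a weight-quantization argument. Fix an infinite-indexed family of networks $\Phi_W$ as in the hypothesis (one for each $W \in \mathcal{W}$ and each target $f \in \Barron_C([-1,1]^{d-1})$). I would then construct, for each $W$, a $(\tau,\eps)$-quantized surrogate $\widetilde{\Phi}_W$ of $\Phi_W$ whose realization is close enough to $R_\varrho \Phi_W$ to inherit the approximation bound. The natural choice is $\eps := W^{-\gamma}$, so that $W \sim \eps^{-1/\gamma}$; after applying Theorem~\ref{thm:QuantizedLowerBound} with $\sigma = 1/\gamma$, the conclusion $\gamma \leq \tfrac{1}{2} + \tfrac{1}{d-1}$ follows immediately.

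The core technical step is the quantization. I would invoke a standard lemma of the type \cite[Lemma~3.7]{boelcskei2019optimal} or \cite[Lemma~VI.8]{ElbraechterDNNApproximationTheory}: if one rounds each weight of a ReLU network to a grid of spacing $\delta > 0$, then on a bounded input domain the $L^\infty$-error of the resulting realization is bounded by a product of a polynomial in the number of weights, an exponential in the depth times the weight bound, and $\delta$ itself. In our situation, the depth is bounded by the fixed constant $L$, the weight bound grows only like $C_1 W^N$, and the number of weights grows only like $C_3 W$; hence this error bound is polynomial in $W$ times $\delta$. Choosing $\tau \in \N$ large enough (depending only on $\gamma, N, L$ and the constants $C_1, C_2, C_3$), the grid spacing $\delta \leq \eps^\tau = W^{-\gamma \tau}$ absorbs this polynomial factor and leaves a residual error of order $W^{-\gamma}$. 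The same choice of $\tau$ simultaneously ensures $\eps^{-\tau} \geq C_1 W^N$, so the quantization range is wide enough to contain every original weight of $\Phi_W$ before rounding, and $\widetilde{\Phi}_W$ is indeed $(\tau,\eps)$-quantized.

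With this in hand, on the bounded domain $(-1,1)^d$ we get
\[
  \| H_f - R_\varrho \widetilde{\Phi}_W \|_{L^1((-1,1)^d)}
  \leq \| H_f - R_\varrho \Phi_W \|_{L^1}
       + 2^d \| R_\varrho \Phi_W - R_\varrho \widetilde{\Phi}_W \|_{L^\infty}
  \leq C_1' \cdot \eps ,
\]
and $W(\widetilde{\Phi}_W) \leq C_3 W \leq C_3 \cdot \eps^{-1/\gamma}$. Enumerating $\mathcal{W} = \{ W_n \}_{n \in \N}$ with $W_n \to \infty$ produces a null-sequence $\eps_n = W_n^{-\gamma}$, and the construction meets every hypothesis of Theorem~\ref{thm:QuantizedLowerBound} with rate parameter $\sigma = 1/\gamma$. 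That theorem then yields $\gamma = 1/\sigma \leq \tfrac{1}{2} + \tfrac{1}{d-1}$, which is the claim.

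The main obstacle is managing the interaction between the three quantities $\tau$, $\eps$, and the polynomial growth rate $N$ of the weight magnitudes: one must verify that a \emph{single} $\tau$ (independent of $W$) simultaneously accommodates both the large dynamic range of the unquantized weights and the precision required to keep the quantization error at scale $\eps$. The fact that $L$ is uniformly bounded is crucial here — it prevents the exponential-in-depth blow-up factor that typically plagues quantization bounds for ReLU networks from destroying the argument; otherwise $\tau$ would be forced to grow with $W$, and the reduction would fail.
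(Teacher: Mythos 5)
Your proposal is correct and follows essentially the same route as the paper's proof: with $\eps := W^{-\gamma}$, the bounded depth $L$ and polynomially growing weight magnitudes and weight counts allow a \emph{fixed} quantization level (the paper's $(m,\eps)$-quantization via \cite[Lemma~VI.8]{ElbraechterDNNApproximationTheory}, your fixed $\tau$) to preserve the $L^1$-error up to a constant, after which \Cref{thm:QuantizedLowerBound} with $\sigma = 1/\gamma$ gives $\gamma \leq \frac{1}{2} + \frac{1}{d-1}$. The only cosmetic difference is that you sketch the rounding-error propagation explicitly where the paper cites the quantization lemma as a black box.
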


\begin{proof}
  Let $k := \lceil \max \{ \gamma^{-1} N + C_1, \,\, \gamma^{-1} + C_3 \} \rceil$ and $m := 3 k L$.
  For $W \in \mathcal{W}$ large enough, we have $\eps := \eps_W := W^{-\gamma} \leq \frac{1}{2}$.
  For this choice of $W$ and given $f \in \Barron_C([-1,1]^{d-1})$,
  let $\Phi$ as in the assumption of the theorem.
  Note that $x \leq 2^x \leq \eps^{-x}$ for all $x \geq 0$, and hence
  \(
    W(\Phi)
    \leq C_3 \cdot W
    = C_3 \cdot \eps^{-1/\gamma}
    \leq \eps^{-(\gamma^{-1} + C_3)}
    \leq \eps^{-k}
    .
  \)
  Likewise, all weights of $\Phi$ are bounded in absolute value by
  \(
    C_1 \, W^N
    = C_1 \, \eps^{-\frac{N}{\gamma}}
    \leq \eps^{-(\frac{N}{\gamma} + C_1)}
    \leq \eps^{-k}
    .
  \)

  Overall, the ``quantization lemma'' \mbox{\cite[Lemma~VI.8]{ElbraechterDNNApproximationTheory}}
  shows that there exists an $(m,\eps)$-quantized network $\Psi$
  with $d$-dimensional input and $1$-dimensional output and such that
  \[
    {W(\Psi) \leq W(\Phi) \leq C_3 \cdot W = C_3 \cdot \eps^{-1/\gamma}}
    \qquad \text{and} \qquad
    \| R_\varrho \Phi - R_\varrho \Psi \|_{\sup}
    \leq \eps ,
  \]
  where the $\|\cdot\|_{\sup}$ norm is taken over $(-1,1)^d$.
  Hence, $\| H_f - R_\varrho \Psi \|_{L^1 ( (-1,1)^d )} \leq (2^d + C_2) \, \eps$.
  Since $\eps_W = W^{-\gamma} \to 0$ as $W \to \infty$
  with $W \in \mathcal{W}$, \Cref{thm:QuantizedLowerBound} shows that
  $\gamma = \frac{1}{1/\gamma} \leq \frac{1}{2} + \frac{1}{d-1}$, as claimed.
\end{proof}


\section{Estimation bounds}%
\label{sec:Generalization}

In this section, we provide error bounds for the performance of empirical risk minimization
for learning the indicator function of a set with boundary of Barron class.
We also briefly discuss the optimality of these results.
More precisely, we show that the best one can hope for is to (roughly) double the
``estimation-error rate'' that we obtain.
We conjecture that neither the lower bound nor the derived rate are optimal,
but we were unable to prove this.

In the following theorem, given a subset $\Omega \subset \R^d$, we use the notation
\[
  \chi_\Omega : \quad
  \R^d \to \{ \pm 1 \}, \quad
  x \mapsto \begin{cases}
              1  , & \text{if } x \in \Omega , \\
              -1 , & \text{otherwise}.
            \end{cases}
\]
Moreover, for $\CalA = (d, N_1, \dots, N_L) \in \N^{L+1}$,
we denote by $\NNWeights(\CalA)$ the set of neural networks $\Phi$
with input dimension $d$, $L$ layers, and $N_\ell$ neurons in the $\ell$th layer
for all $\ell \in \{1, \dots, L\}$. 
Finally, we define $\sign : \R \to \{ \pm 1 \}$ by $\sign(x) = 1$ for $x \geq 0$
while $\sign(x) = -1$ if $x < 0$.

\begin{theorem}\label{thm:LearningBound}
  Let $B,C \geq 1$, $M \in \N$, $d \in \N_{\geq 2}$, $\alpha \in (0,1]$, and $m \in \N$.
  Define
  \[
    N
    := \left\lceil
         \Big( (B C)^2 d \, m / \ln (B C M d \, m) \Big)^{1/(1+\alpha)}
       \right\rceil
    \in \N
  \]
  and $\CalA := \bigl(d,\,\, M (N + 2d + 2),\,\, M(4d + 2),\,\, M,\,\, 1\bigr)$.
  Let $\PP$ be a tube compatible probability measure on $\R^d$ with parameters $C,\alpha$,
  and let $\Omega \in \BarronBoundary_{B,M}(\R^d)$.
  Let $S_X = (X_1,\dots,X_m) \overset{\mathrm{iid}}{\sim} \PP$
  and define $Y_i := \chi_{\Omega} (X_i)$ for $i \in \FirstN{m}$.

  Then, given $\delta \in (0,1)$, with probability at least $1 - \delta$
  regarding the choice of $S_X$, any
  \begin{equation}
    \Phi^\ast
    \in \argmin_{\Phi \in \NNWeights(\CalA)}
          \sum_{i=1}^m
            \Indicator_{\sign(R_\varrho \Phi (X_i)) \neq Y_i}
    \label{eq:EmpiricalRiskMinimization}
  \end{equation}
  satisfies
  \begin{equation}
    \PP \Bigl(\sign\bigl(R_\varrho \Phi^\ast (X)\bigr) \neq \chi_\Omega (X)\Bigr)
    \leq C_0 \cdot
         \bigg(
           B C M \, d^{3/2} \cdot
           \Big(
             \frac{\ln (B C M d m)}{(B C)^2 d m}
           \Big)^{\gamma/2}
           + \Big( \frac{\ln(1/\delta)}{m} \Big)^{1/2}
         \bigg),
    \label{eq:GeneralizationBound}
  \end{equation}
  where $X \sim \PP$.
  Here, $C_0 \geq 1$ is an absolute constant and $\gamma = \frac{\alpha}{1 + \alpha}$.
\end{theorem}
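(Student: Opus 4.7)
The plan is to combine the approximation theorem of Section~\ref{sec:BarronClassBoundaryApproximation} with a standard uniform-convergence argument driven by a VC-dimension bound for ReLU networks of the prescribed architecture. Write $\hypothesis := \{\sign \circ R_\varrho \Phi \,:\, \Phi \in \NNWeights(\CalA)\}$ for the effective hypothesis class used by the ERM. Since the $0/1$-loss takes only two values, it is enough to control $\sup_{h \in \hypothesis} |R(h) - \widehat R(h)|$ (here $R(h) := \PP(h(X) \neq \chi_\Omega(X))$ and $\widehat R$ its empirical counterpart) and to exhibit a single network $\Phi_0 \in \NNWeights(\CalA)$ whose population risk is small.

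For the approximation piece I would take $\Phi_0$ to be the network $I_N$ furnished by \Cref{thm:BarronBoundaryApproxGuarantee} applied to $\Omega$ and the tube-compatible measure $\PP$ (with constants $\alpha, C$), and then absorb a bias of $-\tfrac12$ into the last (linear) output coordinate — this still lies in $\NNWeights(\CalA)$ and has the property that $\sign(R_\varrho \Phi_0(x)) = \chi_\Omega(x)$ whenever $R_\varrho I_N(x) = \Indicator_\Omega(x)$. Since $R_\varrho I_N \in [0,1]$, \Cref{thm:BarronBoundaryApproxGuarantee} then gives
\[
  R(\sign \circ R_\varrho \Phi_0)
  \;\leq\; \PP\bigl(R_\varrho I_N(X) \neq \Indicator_\Omega(X)\bigr)
  \;\leq\; 6 C M B^\alpha \, d^{3/2} \, N^{-\alpha/2}.
\]
Using $B,C \geq 1$ and $\alpha \in (0,1]$, this upper bound can be loosened to $6 B C M d^{3/2} N^{-\alpha/2}$, which will be the approximation term in \eqref{eq:GeneralizationBound}.

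For the estimation piece I would invoke the near-tight VC-dimension bound for piecewise-linear networks of \cite{BartlettNearlyTightVCDimensionBounds}: since $\CalA$ has $L=4$ layers and total number of weights $W \lesssim M^2 d N$, one has $\VC(\hypothesis) \lesssim W L \log W \lesssim M^2 d N \log(M d N)$. Classical VC uniform-convergence (say Theorem~6.8 / the Vapnik–Chervonenkis inequality in \cite{ShalevShwartzUnderstandingMachineLearning}) then gives, with probability at least $1-\delta$,
\[
  \sup_{h \in \hypothesis}
    |R(h) - \widehat R(h)|
  \;\leq\; \kappa
    \Bigl(
      \sqrt{\tfrac{\VC(\hypothesis)\,\ln m}{m}}
      + \sqrt{\tfrac{\ln(1/\delta)}{m}}
    \Bigr) ,
\]
so that, writing $h^\ast := \sign \circ R_\varrho \Phi^\ast$ and $h_0 := \sign \circ R_\varrho \Phi_0$, the ERM property $\widehat R(h^\ast) \leq \widehat R(h_0)$ combined with two applications of the uniform-convergence bound yields
\[
  R(h^\ast)
  \;\leq\; R(h_0)
           + 2 \kappa \Bigl(\sqrt{\tfrac{\VC(\hypothesis)\,\ln m}{m}} + \sqrt{\tfrac{\ln(1/\delta)}{m}}\Bigr) .
\]

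It remains to plug in the approximation bound on $R(h_0)$ and to tune $N$. Substituting $\VC(\hypothesis) \lesssim M^2 d N \ln(BCMdN)$ one obtains a two-term bound of the shape
\[
  R(h^\ast)
  \;\lesssim\; B C M \, d^{3/2} \, N^{-\alpha/2}
             + M \sqrt{\tfrac{d\,N\,\ln(BCMdm)}{m}}
             + \sqrt{\tfrac{\ln(1/\delta)}{m}} ,
\]
and balancing the first two terms is exactly what leads to the prescribed choice
$N \asymp \bigl((BC)^2 d m / \ln(BCMdm)\bigr)^{1/(1+\alpha)}$; both terms then equal (up to constants) the claimed rate $BCM d^{3/2}\bigl(\ln(BCMdm)/((BC)^2 d m)\bigr)^{\gamma/2}$ with $\gamma = \alpha/(1+\alpha)$. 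The main obstacle is bookkeeping: one must verify that logarithmic and polynomial factors in $d, M, B, C$ can be absorbed into a single absolute constant $C_0$, which is where the assumption $B,C \geq 1$ and the careful form of the VC bound become important; the rest is essentially mechanical.
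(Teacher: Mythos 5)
Your strategy is essentially the paper's: take the approximating network from \Cref{thm:BarronBoundaryApproxGuarantee}, shift its output so that its sign recovers $\chi_\Omega$ wherever the approximation is exact, bound $\VC\bigl(\{\sign \circ R_\varrho \Phi : \Phi \in \NNWeights(\CalA)\}\bigr)$ by roughly $W \log W$, combine VC-based uniform convergence with the ERM property, and check that the prescribed $N$ balances the two terms. Two bookkeeping points, one of which genuinely matters. First, the weight count of $\CalA$ is $W \lesssim M^2 d^2 N$, not $M^2 d N$: the block between the first and second hidden layers has $M(N+2d+2)\cdot M(4d+2)$ entries, which is only $\lesssim M^2 d N$ when $N \gtrsim d$. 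This is harmless: with the correct count the estimation term becomes $M d \sqrt{N \ln(dMN)/m}$, and after inserting the prescribed $N$ this equals (up to constants) $(BC)^{1-\gamma} M d^{3/2-\gamma/2}\bigl(\ln(BCMdm)\bigr)^{\gamma/2} m^{-\gamma/2}$, which is still dominated by the first term on the right-hand side of \eqref{eq:GeneralizationBound}.

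Second, and more importantly, the uniform-convergence inequality you actually display carries an extra $\sqrt{\ln m}$ factor, i.e.\ $\sqrt{\VC(\hypothesis)\ln m/m}$. Since $N$ is \emph{fixed} by the theorem statement, this factor cannot be re-balanced away: it inflates the estimation term by a factor of order $\sqrt{\ln m}$ relative to the claimed first term of \eqref{eq:GeneralizationBound}, and because $\gamma/2 < 1/2$ this excess is unbounded as $m \to \infty$, so it cannot be absorbed into the absolute constant $C_0$. The fix is exactly the reference you name in passing but then do not use: the sharp, log-free uniform convergence rate from the fundamental theorem of statistical learning (Theorem~6.8 in Shalev-Shwartz--Ben-David), which gives $\sup_{h \in \hypothesis} |R(h) - \widehat{R}(h)| \lesssim \sqrt{\VC(\hypothesis)/m} + \sqrt{\ln(1/\delta)/m}$; this is what the paper invokes, and with it your computation goes through. (The paper also first disposes of the degenerate case $\Lambda := (BC)^2 d m/\ln(BCMdm) \leq 1$, where \eqref{eq:GeneralizationBound} is trivial, so that $N \leq 2\Lambda^{1/(1+\alpha)}$ is available; you should do the same.)
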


\begin{rem*}
  1) The set
  \(
    \bigl\{
      \bigl( \sign(f(X_1)),\dots,\sign(f(X_m)) \bigr)
      \colon
      f : \R^d \to \R
    \bigr\}
    \subset \{ \pm 1 \}^m
  \)
  is finite, which implies that a minimizer as in \Cref{eq:EmpiricalRiskMinimization} always exists.

  \medskip{}

  2) In the common case where $\alpha = 1$
  (for instance, if $d \PP(x) = \Indicator_{[0,1]^{d}}(x) \, d x$),
  we have $\gamma = 1/2$, so that one gets
  \[
    \PP \Bigl(\sign\bigl(R_\varrho \Phi^\ast (X)\bigr) \neq \chi_\Omega (X)\Bigr)
    \lesssim \Bigl(\frac{\ln m}{m}\Bigr)^{1/4} + \Big( \frac{\ln(1/\delta)}{m} \Big)^{1/2} .
  \]
\end{rem*}

\begin{proof}
  All ``implied constants'' appearing in this proof are understood to be \emph{absolute} constants.

  Define $\Lambda := (B C)^2 d m / \ln (B C M d m)$, so that
  $N = \lceil \Lambda^{1/(1+\alpha)} \rceil$.
  If $\Lambda \leq 1$, then the right-hand side of \Cref{eq:GeneralizationBound} is
  at least $1$, so that the estimate is trivial.
  We can thus assume without loss of generality that $\Lambda > 1$, so that $N \geq 2$
  and $N \leq 1 + \Lambda^{1/(1+\alpha)} \leq 2 \, \Lambda^{1/(1+\alpha)}$.

  Let $\hypothesis := \{ \sign \circ R_\varrho \Phi \colon \Phi \in \NNWeights(\CalA) \}$.
  Note that since at most every neuron in layer $\ell$ can be connected
  to every neuron in layer $\ell+1$,
  the number $W(\CalA)$ of weights of a network with architecture $\CalA$ satisfies
  \(
    W(\CalA) \lesssim M^2 d^2 N .
  \)
  Therefore, \cite[Theorem~2.1]{BartlettAlmostLinearVC} shows that there
  are absolute constants $C_1,C_2 > 0$ such that
  \[
    \VC (\hypothesis)
    \leq C_1 \cdot M^2 d^2 N \cdot \ln(M^2 d^2 N)
    \leq C_2 \cdot M^2 d^2 N \cdot \ln(d M N) .
  \]

  Next, recall that $\Lambda \geq 1$ and hence
  $N \leq 2 \Lambda^{1/(1+\alpha)} \leq 2 \Lambda \lesssim (B C)^2 d m$.
  Therefore, $\ln(d M N) \lesssim 1 + \ln( (B C d)^2 M m ) \lesssim \ln (B C M d \, m)$,
  which easily implies that
  \begin{equation}
  \begin{split}
    \sqrt{\frac{\VC(\hypothesis)}{m}}
    & \lesssim m^{-1/2} M d \sqrt{N} \sqrt{\ln(d M N)} \\
    & \lesssim \big( \ln (B C d M m) \big)^{\frac{1}{2} (1 - \frac{1}{1+\alpha})}
               \cdot M
               \cdot d^{\frac{1}{2}(2 + \frac{1}{1+\alpha})}
               \cdot (B C)^{1/(1+\alpha)}
               \cdot m^{\frac{1}{2} (\frac{1}{1+\alpha} - 1)} \\
    & =        (B C)^{1 - \gamma}
               \cdot M
               \cdot d^{\frac{3}{2} - \frac{\gamma}{2}}
               \cdot \big( \ln (B C M d \, m) \big)^{\gamma / 2}
               \cdot m^{-\gamma/2}
      =:       (\ast) .
  \end{split}
  \label{eq:LearningBoundVCEstimate}
  \end{equation}
  To make use of this estimate, note that the
  \emph{Fundamental theorem of statistical learning theory}
  (see \cite[Theorem~6.8 and Definitions~4.1 and 4.3]{ShalevShwartzUnderstandingMachineLearning})
  shows for arbitrary $\eps, \delta \in (0,1)$ that if we set
  \[
    L_{\PP}(h) := \PP \bigl( h(X) \neq \chi_\Omega(X) \bigr)
    \quad \text{and} \quad
    L_{S}(h) := \frac{1}{m} \sum_{i=1}^m \Indicator_{h(X_i) \neq \chi_\Omega(X_i)},
  \]
  then, with probability at least $1 - \delta$ with respect to the choice
  of $S = (X_1,\dots,X_m) \overset{iid}{\sim} \PP$, we have
  \begin{equation}
    \forall \, h \in \hypothesis: \quad
      |L_{\PP}(h) - L_S(h)| \leq \eps
    ,
    \label{eq:GeneralizationBoundAbstract}
  \end{equation}
  provided that $m \geq C_3 \frac{\VC(\hypothesis) + \ln(1/\delta)}{\eps^2}$.
  Using the estimate $\sqrt{a+b} \leq \sqrt{a} + \sqrt{b}$ for $a,b \geq 0$,
  it is easy to see that the condition on $m$ is satisfied if
  \(
    \eps
    \geq \sqrt{C_3} \cdot
         \big(
           \sqrt{\VC(\hypothesis)/m}
           + \sqrt{\ln(1/\delta)/m}
         \big)
    .
  \)
  Finally, thanks to \Cref{eq:LearningBoundVCEstimate}, we see that
  there is an absolute constant $C_0 > 0$
  (which we can without loss of generality take to satisfy $C_0 \geq 24$)
  such that this condition holds as soon as
  \[
    \eps
    \geq \eps_0
    :=   \frac{C_0}{4} \cdot \Bigl[ (\ast) + \Big( \frac{\ln(1/\delta)}{m} \Big)^{1/2} \Bigr] .
  \]
  This is satisfied if we take $\eps$ as one fourth of the right-hand side
  of \Cref{eq:GeneralizationBound}; for this, note that in case of $\eps \geq 1$,
  Estimate \eqref{eq:GeneralizationBoundAbstract} is trivially satisfied.

  \medskip{}

  Now, choosing $\eps$ to be one fourth of the right-hand side of \Cref{eq:GeneralizationBound},
  we know that with probability at least $1 - \delta$ with respect to the choice of $S$,
  \Cref{eq:GeneralizationBoundAbstract} holds.
  Let us assume that $S = (X_1,\dots,X_m)$ is chosen such that this holds.
  Now, \Cref{thm:BarronBoundaryApproxGuarantee} shows that there is $\Phi_0 \in \NNWeights(\CalA)$
  such that
  \[
    \PP
    (
     \{
        x \in \R^d
        \colon
        \Indicator_\Omega (x) \neq R_\varrho \Phi_0 (x)
     \}
    )
    \leq 6 \, B C M \, d^{3/2} \, N^{-\alpha/2}
    \leq \frac{C_0}{4} B C M \, d^{3/2} \, \Lambda^{-\gamma/2}
    \leq \eps .
  \]
  It is not hard to see that there exists $\Phi_1 \in \NNWeights(\CalA)$ satisfying
  $R_\varrho \Phi_1 = -1 + 2 R_\varrho \Phi_0$ and that
  if $\Indicator_\Omega(x) = R_\varrho \Phi_0(x)$,
  then $h_1(x) = R_\varrho \Phi_1 (x) = \chi_\Omega(x)$ for
  $h_1 := \sign \circ (R_\varrho \Phi_1) \in \hypothesis$.
  Therefore,
  \(
    L_{\PP}(h_1)
    = \PP ( h_1(X) \neq \chi_\Omega(X))
    \leq \PP (\Indicator_\Omega (X) \neq R_\varrho \Phi_0 (X))
    \leq \eps .
  \)
  Overall, if $\Phi^\ast \in \NNWeights(\CalA)$ satisfies \Cref{eq:EmpiricalRiskMinimization},
  and if we set $h^\ast := \sign \circ R_\varrho \Phi^\ast$,
  then \Cref{eq:GeneralizationBoundAbstract} shows
  \[
    L_{\PP}(h^\ast)
    \leq L_S(h^\ast) + \eps
    \leq L_S(h_1) + \eps
    \leq L_{\PP}(h_1) + 2 \eps
    \leq 3 \eps
    \leq 4 \eps
    =    \mathrm{RHS} \! \text{ \eqref{eq:GeneralizationBound} }
    ,
  \]
  which proves \Cref{eq:GeneralizationBound}.
\end{proof}

\begin{remark}[Quantifying the non-optimality of the learning bound]\label{rem:LearningSharpness}
  By taking $\delta \sim m^{-\gamma/2}$, it is not hard to see
  that the bound in \Cref{thm:LearningBound} implies that the learning algorithm
  \[
    \bigl( (X_1,\chi_\Omega(X_1)), \dots, (X_m, \chi_\Omega(X_m)) \bigr)
    \mapsto A_S := \sign \circ R_\varrho \Phi_S^\ast
  \]
  with $\Phi_S^\ast$ a solution to \Cref{eq:EmpiricalRiskMinimization} satisfies
  \[
    \EE_S \bigl[ \| A_S - \chi_\Omega  \|_{L^1(\PP)} \bigr]
    \lesssim \big[ \ln(m) / m \big]^{\gamma / 2} ;
  \]
  here, we used that $|A_S - \chi_\Omega| \leq 2 \cdot \Indicator_{A_S \neq \chi_\Omega}$.
  For the uniform measure $d \PP = 2^{-d} \Indicator_{[-1,1]^{d}} \, d x$, we have
  $\gamma = 1/2$, and therefore
  \(
    \EE_S \bigl[ \| A_S - \chi_\Omega  \|_{L^1([-1,1]^d)} \bigr]
    \lesssim \big[ \ln(m) / m \big]^{1/4} .
  \)
  In the remainder of this remark, we sketch an argument showing that no learning algorithm
  $S \mapsto A_S$ can satisfy
  \begin{equation}
    \EE_S \bigl[ \| A_S - \chi_\Omega  \|_{L^1([-1,1]^d)} \bigr]
    \lesssim m^{-\theta}
    \quad \text{with} \quad
    \theta > \theta^\ast := \frac{1}{2} \frac{d + 2 + \Indicator_{2\Z + 1}(d)}{d-1}.
    \label{eq:LearningLowerBound}
  \end{equation}
  Note that $\theta^\ast \to \frac{1}{2}$ as $d \to \infty$, which still leaves a gap
  between this lower bound and the estimation-error rate $m^{-1/4}$ that we obtain.

  We expect the lower bound of \eqref{eq:LearningLowerBound} to be suboptimal.
  One reason why we assume so is that, for a general estimation problem,
  where the error of estimating a density from $m$ measurements is measured
  with respect to the Kullback-Leibler divergence,
  \cite[Theorem~1]{YangBarronMinimaxRates} yields a general lower bound
  in terms of the metric entropy of the class of densities.
  As we have seen in the proof of Theorem~\ref{thm:QuantizedLowerBound},
  the metric entropy of the set of horizon functions can be lower bounded
  by using fact that a ball in $B_s^{2,2}$ for $s > (d+1)/2$ embedds
  into the Fourier-analytic Barron space.
  By this observation it can be seen using \cite[Theorem~1]{YangBarronMinimaxRates}
  that a lower bound on the expected error of estimating $\chi_\Omega$
  from $m$ measurements as in Theorem~\ref{thm:LearningBound}
  measured with respect to the Kullback-Leibler divergence
  is given by $\CalO(m^{-(\frac{d+1}{4d} + \delta)})$ for any $\delta > 0$ .
  Note that this rate almost matches the upper bound given in Theorem~\ref{thm:LearningBound}
  for the $L^1$ estimation error.
  The argument in \cite{YangBarronMinimaxRates} yields bounds for $L^2$ distances
  under additional assumptions, see \cite[Theorems~4,5,6]{YangBarronMinimaxRates}.
  However, none of these assumptions are satisfied in our case.

  To prove \eqref{eq:LearningLowerBound}, assume by way of contradiction
  that some learning algorithm $S \mapsto A_S$ satisfies \Cref{eq:LearningLowerBound},
  uniformly for all $\Omega \in \BarronBoundary_{1,1}(\R^d)$.
  Let $Q := (-1,1)^{d-1}$ and ${s := 1 + \lfloor \frac{d+1}{2} \rfloor}$,
  as well as $\CalG := \{ f \in W^{s,2}(Q) \colon \| f \|_{W^{s,2}} \leq 1 \}$
  with the usual Sobolev space $W^{s,2}(Q)$.
  Since $s > \frac{d+1}{2}$, we see as in the proof of \Cref{prop:HorizonFunctionsCoveringNumbers}
  that there is $c > 0$ such that
  \[
    \forall \, f \in \CalG: \quad
      \| c \, f \|_{\sup} \leq 1
      \quad \text{and} \quad
      \Omega_f
      := \bigl\{ (x, t) \in [-1,1]^{d-1} \times [-1,1] \,\,\colon\,\, t \leq c \, f(x) \bigr\}
      \in \BarronBoundary_{1,1}(\R^d) .
  \]

  Let $W = (W_1,\dots,W_m) \overset{\mathrm{iid}}{\sim} U([-1,1]^d)$, and write $W_i = (X_i, X_i')$
  with $X_i \in [-1,1]^{d-1}$ and $X_i ' \in [-1, 1]$.
  Given $f \in \CalG$, let
  \[
    Y_i
    := - 1 + 2 \cdot \Indicator_{X_i ' \leq c f(X_i)}
    = \chi_{\Omega_f} (W_i)
    ,
  \]
  and set $S_f := \bigl( (W_1, Y_1), \dots, (W_m, Y_m) \bigr)$.
  By \Cref{eq:LearningLowerBound}, there is $C > 0$ independent of $m$ such that
  \[
    \EE_W \bigl[ \| A_{S_f} - \chi_{\Omega_f} \|_{L^1([-1,1]^d)}\bigr]
    \leq C \cdot m^{-\theta} .
  \]

  Note that $S_f$ is uniquely determined by fixing $W$ and $f$,
  and that $S_f$ does not depend fully on $f$, but only on $m$ point samples of $f$.
  Define
  \[
    B_W : \quad
    [-1,1]^{d-1} \to \R, \quad
    x \mapsto \frac{1}{c} \cdot \Big( -1 + \int_{-1}^1 \frac{1 + A_{S_f} (x,t)}{2} \, d t \Big) .
  \]
  Note that $B : (W,f) \mapsto B_W$ is a Monte-Carlo algorithm
  in the sense of \cite[Section~2]{HeinrichRandomApproximation},
  and for each (random) choice of $W$, $B$ computes its output based on $m$ point samples of $f$.
  To motivate the definition of $B_W$, note because of $\| c \, f \|_{\sup} \leq 1$ that
  \[
    \int_{-1}^1 \frac{1 + \chi_{\Omega_f} (x,t)}{2} \, d t
    = \int_{-1}^1
        \Indicator_{\Omega_f} (x,t)
      \, d t
    = \int_{-1}^1
        \Indicator_{t \leq c \, f(x)}
      \, d t
    = \int_{-1}^{c \, f(x)} \, d t
    = c \, f(x) + 1 ,
  \]
  and hence
  \(
    f(x) = \frac{1}{c} \Big( -1 +  \int_{-1}^1 \frac{1 + \chi_{\Omega_f}(x,t)}{2} \, d t \Big) .
  \)
  This implies
  \[
    \| B_W - f \|_{L^1([-1,1]^{d-1})}
    \leq \frac{1}{2 c}
         \int_{[-1,1]^{d-1}}
           \Big|
             \int_{-1}^1
               A_{S_f} (x,t) - \chi_{\Omega_f} (x,t)
             \, d t
           \Big|
         \, d x
    \leq \frac{1}{2 c} \| A_{S_f} - \chi_{\Omega_f} \|_{L^1} ,
  \]
  and hence
  \[
    \EE_W \| B_W - f \|_{L^1}
    \leq \frac{1}{2 c} \EE_W \| A_{S_f} - \chi_{\Omega_f} \|_{L^1}
    \leq \frac{C}{2 c} \cdot m^{-\theta} .
  \]
  Note that this holds for every $f \in \CalG$ and recall from above that $B : (W,f) \mapsto B_W$
  is a Monte-Carlo algorithm that depends on $f$ only through $m$ point samples.
  However, it is known from information-based complexity
  (see for instance \cite[Theorem~6.1]{HeinrichRandomApproximation})
  that such an error bound for a Monte-Carlo algorithm can only hold if
  \(
    \theta
    \leq \frac{s}{d-1}
    = \frac{1}{2} \frac{d + 2 + \Indicator_{2\Z + 1}(d)}{d-1}
    = \theta^\ast
    .
  \)
\end{remark}


\section{The case against general measures}%
\label{sec:GeneralMeasures}

In this section, we show that for general probability measures,
one cannot derive any non-trivial minimax bound regarding the approximation
of sets with Barron class boundary using ReLU neural networks.

The following general result shows that for sets of infinite $VC$-dimension
and general probability measures, no non-trivial minimax approximation results
using neural networks can be derived.
To conveniently formulate the result, we use the notation
\[
  \NNWeights_{\! N,L}
  := \big\{
       \Phi
       \,\,\colon\,\,
       \Phi \text{ NN with input dimension } d,
       \text{ with } L(\Phi) \leq L
       \text{ and } N(\Phi) \leq N
     \big\} .
\]
Furthermore, we continue to write $\varrho$ for the ReLU.
The proof of the following lemma is based on (the proof of) the no-free-lunch theorem
as presented in \cite[Theorem~5.1]{ShalevShwartzUnderstandingMachineLearning}.

\begin{proposition}\label{prop:NoApproximationBoundsForInfiniteVC}
  Let $\Omega \subset \R^d$ be Borel measurable and let
  $\calF \!\subset\! \{ F : \Omega \!\to\! \{ 0,1 \} \colon F \text{ measurable} \}$
  such that $\VC(\calF) = \infty$.

  Then for arbitrary $N,L \in \N$ we have
  \[
    \sup_{\strut \mu \text{ Borel prob.~measure on } \Omega} \,\,
      \sup_{\strut F \in \calF} \,\,
        \inf_{\strut \Phi \in \NNWeights_{\! N,L}} \,\,
          \big\| F - \Indicator_{(0,\infty)} \circ R_\varrho \Phi \big\|_{L^1(\mu)}
    \geq \frac{1}{16} .
  \]
\end{proposition}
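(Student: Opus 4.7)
The proof is a standard no-free-lunch VC-dimension counting argument. The first step is to observe that the hypothesis class
\[
  \mathcal{H}
  := \bigl\{
       \Indicator_{(0,\infty)} \circ R_\varrho \Phi
       \,\,\colon\,\,
       \Phi \in \NNWeights_{\!N,L}
     \bigr\}
\]
has finite VC-dimension $V = V(N,L,d) < \infty$: since the architectures of $\Phi \in \NNWeights_{\!N,L}$ are bounded, this follows from the VC-bound for piecewise polynomial networks already used in this paper (cf.\ \cite{BartlettNearlyTightVCDimensionBounds}).

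The second step uses $\VC(\calF) = \infty$: for a parameter $k \in \N$ to be chosen large, I would pick a set $C = \{c_1, \dots, c_k\} \subset \Omega$ that is shattered by $\calF$, and let $\mu := k^{-1} \sum_{i=1}^k \delta_{c_i}$ be the uniform probability measure on $C$. For every measurable $F : \Omega \to \{0,1\}$ and every $h \in \mathcal{H}$ one then has
\[
  \| F - h \|_{L^1(\mu)}
  = \frac{1}{k} \cdot \big|\{ i \in \FirstN{k} \colon F(c_i) \neq h(c_i) \}\big|
  =: \frac{d_H(F|_C, h|_C)}{k} .
\]
It therefore suffices to exhibit a labeling $L_0 \in \{0,1\}^C$ with $d_H(L_0, h|_C) \geq \lceil k/16 \rceil$ for every $h \in \mathcal{H}$; shattering then provides an $F \in \calF$ extending $L_0$, and consequently $\| F - h \|_{L^1(\mu)} \geq 1/16$ uniformly in $h$.

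Existence of such $L_0$ reduces to a counting estimate. By Sauer--Shelah, the number of distinct restrictions satisfies $|\{h|_C \colon h \in \mathcal{H}\}| \leq (ek/V)^V$, which grows only polynomially in $k$. For each fixed restriction $h|_C$, the number of labelings within Hamming distance $< \lceil k/16 \rceil$ of it is at most $k \binom{k}{\lceil k/16 \rceil} \leq k \,(16 e)^{k/16}$, which is bounded by $\mathrm{poly}(k) \cdot 2^{\beta k}$ with $\beta := \log_2(16 e)/16 < 1$. Consequently the total count of ``bad'' labelings is $\leq \mathrm{poly}(k) \cdot 2^{\beta k} < 2^k$ for $k$ large enough, so a suitable $L_0$ exists in the complement. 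The only real obstacle is citing the VC-dimension bound for bounded-architecture ReLU networks; the rest is a routine combinatorial estimate. (A probabilistic refinement---drawing $L_0 \in \{0,1\}^C$ uniformly at random and applying Hoeffding together with a union bound over the $(ek/V)^V$ restrictions---would in fact yield a lower bound close to $1/4$, but the stated constant $1/16$ does not require this sharpening.)
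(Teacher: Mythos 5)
Your argument is correct, and it reaches the constant $1/16$ by a genuinely different route than the paper. The paper's proof is a no-free-lunch-style contradiction: from $\VC(\mathcal{N}) < \infty$ for the thresholded network class it invokes the fundamental theorem of statistical learning to get the uniform convergence property at some sample size $n$, takes a set of $2n$ points shattered by $\calF$ with the uniform measure, and shows via an empirical risk minimizer and an averaging argument over all labelings (using that at least half the shattered points are never sampled) that an approximation error below $1/16$ would force the averaged error to be simultaneously $< 1/4$ and $\geq 1/4$. You instead argue purely combinatorially: Sauer--Shelah bounds the trace of the thresholded network class on a shattered set of size $k$ by polynomially many labelings, while each Hamming ball of radius $\lceil k/16 \rceil$ contains only about $2^{\beta k}$ labelings with $\beta = \log_2(16e)/16 < 1$, so for $k$ large some labeling $L_0$ is $\lceil k/16\rceil$-far from every network trace; shattering by $\calF$ realizes $L_0$ as some $F \in \calF$, and the uniform measure on the shattered set gives $\inf_{\Phi \in \NNWeights_{\! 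N,L}} \| F - \Indicator_{(0,\infty)} \circ R_\varrho \Phi \|_{L^1(\mu)} \geq 1/16$ directly. Both proofs rest on the same two pillars (finite VC dimension of $\{\Indicator_{(0,\infty)} \circ R_\varrho \Phi\}$ --- your citation is adequate here, noting only that $\NNWeights_{\! N,L}$ is a finite union of bounded architectures --- and arbitrarily large sets shattered by $\calF$), but your counting argument dispenses with the learning-theoretic machinery (uniform convergence, ERM) and exhibits the hard pair $(\mu, F)$ explicitly, whereas the paper's route reuses results it cites anyway and parallels the classical no-free-lunch theorem. A minor remark: your parenthetical probabilistic refinement (random $L_0$, Hoeffding, union bound over the polynomially many traces) would in fact yield a constant close to $1/2$ rather than $1/4$, since a random labeling lies at Hamming distance roughly $k/2$ from each fixed trace; but, as you note, this is not needed for the stated bound.
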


\begin{remark}\label{rem:OmittingHeavisideDoesNotHelp}
%
Even without composing the ReLU neural network $R_\varrho \Phi$ with $\Indicator_{(0,\infty)}$,
the above result implies that
\[
  \sup_{\strut \mu \text{ Borel prob.~measure on } \Omega} \,\,
    \sup_{\strut F \in \calF} \,\,
      \inf_{\strut \Phi \in \NNWeights_{\! N,L}} \,\,
        \big\| F - R_\varrho \Phi \big\|_{L^1(\mu)}
  \geq \frac{1}{32} .
\]

This follows by first noting that $\{ R_\varrho \Phi \colon \Phi \in \NNWeights_{\! N,L} \}$
is closed under addition of constant functions and secondly by noting that
\begin{equation}
  |y - \Indicator_{(0,\infty)} (z - \tfrac{1}{2})|
  \leq 2 \, |y - z|
  \qquad \forall \, y \in \{ 0,1 \} \text{ and } z \in \R .
  \label{eq:HeavisideCompositionEstimate}
\end{equation}
This estimate is trivial in case of $y = \Indicator_{(0,\infty)} (z - \frac{1}{2})$;
thus, let us assume that $y \neq \Indicator_{(0,\infty)}(z - \frac{1}{2})$.
Then there are two cases:
First, if $z \leq \frac{1}{2}$, then $\Indicator_{(0,\infty)} (z - \frac{1}{2}) = 0$ and $y = 1$,
which implies that
\(
  2 \, |y - z|
  \geq 2 (y - z)
  \geq 1
  = |y - \Indicator_{(0,\infty)}(z - \frac{1}{2})|
  .
\)
If otherwise $z > \frac{1}{2}$, then $\Indicator_{(0,\infty)}(z - \frac{1}{2}) = 1$ and $y = 0$,
so that $2 \, |y - z| = 2 |z| \geq 1 = |y - \Indicator_{(0,\infty)}(z - \frac{1}{2})|$.
This proves \eqref{eq:HeavisideCompositionEstimate}.
\end{remark}

\begin{proof}[Proof of \Cref{prop:NoApproximationBoundsForInfiniteVC}]
  Let $N,L \in \N$ be arbitrary.
  As shown for instance in \cite[Theorem~8.7]{AnthonyBartlettNeuralNetworkLearning},
  if we consider the function class
  \(
    \mathcal{N} := \{
                     \Indicator_{(0,\infty)} \circ R_\varrho \Phi
                     \colon
                     \Phi \in \NNWeights_{\! N,L}
                   \}
  \),
  then $\VC(\mathcal{N}) < \infty$.
  By the fundamental theorem of statistical learning theory
  (see for instance \cite[Theorem~6.7]{ShalevShwartzUnderstandingMachineLearning}),
  this means that $\mathcal{N}$ has the \emph{uniform convergence property},
  which implies (see \cite[Definition~4.3]{ShalevShwartzUnderstandingMachineLearning})
  that there is some $n \in \N$ such that for each measurable $F : \Omega \to \{ 0,1 \}$
  and each Borel probability measure $\mu$ on $\Omega$,
  if we choose $S_X = (X_1,\dots,X_n) \overset{\text{i.i.d.}}{\sim} \mu$,
  then with probability at least $1 - \frac{1}{10}$ with respect to the choice of $S_X$, we have
  \begin{equation}
    \sup_{\phi \in \mathcal{N}}
      \big| R_{\mu,F} (\phi) - R_{S_X,F}(\phi) \big|
    \leq \frac{1}{32} ,
    \label{eq:NoGeneralMeasureProofGeneralizationBound}
  \end{equation}
  where
  \[
    R_{\mu,F}(\phi) = \mu\bigl(\{ x \in \Omega \colon \phi(x) \neq F(x) \}\bigr)
    \qquad \text{and} \qquad
    R_{S_X,F} (\phi)
    = \frac{1}{n} \sum_{i=1}^n \Indicator_{\phi(X_i) \neq F(X_i)} .
  \]
  Note $|F - \phi| \in \{ 0,1 \}$, whence $R_{\mu,F} (\phi) = \| F - \, \phi \|_{L^1(\mu)}$
  and ${R_{S_X, F} (\phi) = \frac{1}{n} \! \sum_{i=1}^n \! |\phi(X_i) - F(X_i)|}$.

  Since $\VC(\calF) = \infty$, there is a set $\Omega_0 \subset \Omega$ of cardinality
  $|\Omega_0| = 2n$ such that $\Omega_0$ is shattered by $\calF$, meaning that if we set
  $\calG := \bigl\{ g : \Omega_0 \to \{ 0, 1\} \bigr\}$,
  then $\calG = \{ f|_{\Omega_0} \colon f \in \calF \}$.
  Let $\mu := U(\Omega_0)$ denote the uniform distribution on $\Omega_0$, meaning
  $\mu(\{ x \}) = 1 / |\Omega_0|$ for all $x \in \Omega_0$,
  and assume towards a contradiction that
  \begin{equation}
    \sup_{F \in \calF} \,
      \inf_{\phi \in \mathcal{N}} \,
        \| F - \phi \|_{L^1(\mu)}
    < \frac{1}{16} .
    \label{eq:InfiniteVCContradictionAssumption}
  \end{equation}
  Now, given any
  $S = \bigl( (X_i,Y_i) \bigr)_{i=1,\dots,n} \in \bigl(\Omega_0 \times \{ 0,1 \}\bigr)^n$,
  choose $\phi_S \in \mathcal{N}$ satisfying
  \begin{equation}
    \phi_S \in \argmin_{\phi \in \mathcal{N}}
                 \sum_{i=1}^n
                   |\phi(X_i) - Y_i| .
    \label{eq:NoGeneralMeasureProofERM}
  \end{equation}
  Such a function $\phi_S$ exists, since the expression $\sum_{i=1}^N |\phi(X_i) - Y_i|$
  only depends on $\phi|_{\Omega_0}$, while
  ${\{ \phi|_{\Omega_0} \colon \phi \in \mathcal{N} \} \subset \{0,1\}^{\Omega_0}}$
  is a finite set.
  Here, $\{ 0,1 \}^{\Omega_0} = \bigl\{ \psi : \Omega_0 \to \{ 0,1 \} \bigr\}$ is the set of all
  functions from $\Omega_0$ to $\{ 0,1 \}$.

  For $S_X = (X_1,\dots,X_n) \in \Omega_0^n$ and $g \in \calG$,
  let us define $S_X (g) := \bigl( (X_i, g(X_i))\bigr)_{i=1,\dots,n}$.
  Now, given an arbitrary $g \in \calG$, recall from above that $g = F|_{\Omega_0}$
  for some $F \in \calF$.
  Thanks to \eqref{eq:InfiniteVCContradictionAssumption},
  there is thus some $\phi^\ast \in \mathcal{N}$ satisfying
  $\| g - \phi^\ast \|_{L^1(\mu)} = \| F - \phi^\ast \|_{L^1(\mu)} < \frac{1}{16}$.
  Overall, we thus see that with probability at least $1 - \frac{1}{10}$
  with respect to the choice of $S_X = (X_1,\dots,X_n) \overset{\text{i.i.d.}}{\sim} \mu$, we have
  \begin{align*}
    \| g - \phi_{S_X (g)} \|_{L^1(\mu)}
    & = \| F - \phi_{S_X (F)} \|_{L^1(\mu)}
      = R_{\mu,F} \bigl( \phi_{S_X(F)} \bigr) \\
    & \overset{\eqref{eq:NoGeneralMeasureProofGeneralizationBound}}{\leq}
      \frac{1}{32} + R_{S_X, F} \bigl( \phi_{S_X(F)} \bigr) \\
    & \overset{\eqref{eq:NoGeneralMeasureProofERM}}{\leq}
      \frac{1}{32} + R_{S_X, F} \bigl( \phi^\ast \bigr) \\
    & \overset{\eqref{eq:NoGeneralMeasureProofGeneralizationBound}}{\leq}
      \frac{1}{16} + R_{\mu, F} \bigl( \phi^\ast \bigr)
      =    \frac{1}{16} + \| F - \phi^\ast \|_{L^1(\mu)}\\
    & \overset{\eqref{eq:InfiniteVCContradictionAssumption}}{<}    \frac{1}{8} .
  \end{align*}
  Since $|g - \phi_{S_X (g)}| \leq 1$, we thus see for every $g \in \calG$ that
  \[
    \EE_{S_X} \,\, \| g - \phi_{S_X(g)} \|
    \leq \frac{1}{10} + \frac{1}{8}
    < \frac{1}{4}
    \quad \text{and hence} \quad
    \EE_{S_X}
    \Big[
      \frac{1}{|\calG|}
      \sum_{g \in \calG}
        \| g - \phi_{S_X(g)} \|_{L^1(\mu)}
    \Big]
    < \frac{1}{4} .
  \]
  In the last part of the proof, we will show that this is impossible,
  by showing for \emph{every} ${S_X = (X_1,\dots,X_n) \in \Omega_0^n}$ that
  $\frac{1}{|\calG|} \sum_{g \in \calG} \| g - \phi_{S_X(g)} \|_{L^1(\mu)} \geq \frac{1}{4}$.

  Thus, let $S_X = (X_1,\dots,X_n) \in \Omega_0^n$ be fixed,
  and set $\Omega_1 := \{ X_1,\dots,X_n \}$,
  noting that $|\Omega_0 \setminus \Omega_1| \geq n$.
  Given $g \in \calG$ and $x \in \Omega_0$, define
  \[
    g^{(x)} : \quad
    \Omega_0 \to \{ 0,1 \}, \quad
    y \mapsto \begin{cases}
                g(y) ,    & \text{if } y \neq x , \\
                1 - g(x), & \text{otherwise} .
              \end{cases}
  \]
  It is easy to see that $\calG \to \calG, g \mapsto g^{(x)}$ is bijective,
  since $(g^{(x)})^{(x)} = g$.
  Furthermore, given any $x \in \Omega_0 \setminus \Omega_1$, note that $S_X (g) = S_X (g^{(x)})$,
  so that
  \[
    \bigl|g(x) - \phi_{S_X (g)} (x)\bigr|
    + \bigl|g^{(x)}(x) - \phi_{S_X (g^{(x)})} (x)\bigr|
    = \bigl|g(x) - \phi_{S_X (g)} (x)\bigr|
      + \bigl|g^{(x)}(x) - \phi_{S_X (g)} (x)\bigr|
    = 1 .
  \]
  Overall, we thus see
  \begin{align*}
    \frac{1}{|\calG|}
    \sum_{g \in \calG}
      \| g - \phi_{S_X (g)} \|_{L^1(\mu)}
    & \geq \frac{1}{2n}
           \frac{1}{|\calG|}
           \sum_{x \in \Omega_0 \setminus \Omega_1}
             \sum_{g \in \calG}
               |g(x) - \phi_{S_X(g)}(x)| \\
    & \geq \frac{1}{2n}
           \frac{1}{2|\calG|}
           \sum_{x \in \Omega_0 \setminus \Omega_1}
             \sum_{g \in \calG}
               \big[ |g(x) - \phi_{S_X(g)}(x)| + |g^{(x)}(x) - \phi_{S_X(g^{(x)})}(x)| \big] \\
    & =    \frac{|\Omega_0 \setminus \Omega_1|}{2n}
           \cdot \frac{|\calG|}{2 |\calG|}
      \geq \frac{1}{4} ,
  \end{align*}
  as claimed.
  This completes the proof.
\end{proof}

In \Cref{prop:NoApproximationBoundsForInfiniteVC}, the measure $\mu$ might depend on the choice
of $N,L \in \N$.
The next result shows that even if one restricts to a \emph{fixed} measure $\mu$
for all $N,L \in \N$, the approximation rate can get arbitrarily bad.

\begin{proposition}
  Let $\Omega \subset \R^d$ be Borel measurable and let
  $\calF \!\subset\! \{ F : \Omega \!\to\! \{ 0,1 \} \colon F \text{ measurable} \}$
  such that $\VC(\calF) = \infty$.

  Then for each null-sequence $(\eps_n)_{n \in \N}$ and arbitrary sequences
  $(N_n)_{n \in \N} \!\subset\! \N$ and ${(L_n)_{n \in \N} \!\subset\! \N}$,
  there is a Borel probability measure $\mu$ on $\Omega$
  and some $n_0 \in \N$ such that
  \[
    \sup_{F \in \calF} \,\,
      \inf_{\Phi \in \NNWeights_{\! N_n, L_n}}
        \| F - \Indicator_{(0,\infty)} \circ R_\varrho \Phi \|_{L^1(\mu)}
    \geq \eps_n
    \qquad \forall \, n \in \N_{\geq n_0} .
  \]
\end{proposition}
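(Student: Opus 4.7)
The plan is to construct $\mu$ as a convex combination of uniform measures on finite shattered sets, each one tailored via \Cref{prop:NoApproximationBoundsForInfiniteVC} to defeat the network class $\NNWeights_{N_{m_k}, L_{m_k}}$ at one point $m_k$ of a sparse subsequence, and then to cover all indices $n$ via the unique gap containing $n$.

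First I would reduce to monotone sequences. Replacing $(N_n, L_n)$ by their running maxima only enlarges the network classes $\NNWeights_{N_n, L_n}$, and replacing $\eps_n$ by its non-increasing envelope $\sup_{j \geq n} \eps_j$ only strengthens the desired bound; so I may assume without loss of generality that $(N_n, L_n)$ is non-decreasing and $(\eps_n)$ is non-increasing. Since $\eps_n \to 0$, I can inductively choose indices $m_0 < m_1 < m_2 < \cdots$ with $\eps_{m_k+1} \leq 2^{-k-6}$ for every $k \geq 0$, so that $\sum_{k \geq 0} \eps_{m_k+1} \leq 1/32$.

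For each $k \geq 1$, the hypothesis $\VC(\calF) = \infty$ lets me pick a finite set $\Omega_k \subset \Omega$ shattered by $\calF$ and large enough that the proof of \Cref{prop:NoApproximationBoundsForInfiniteVC}, applied to the finite-VC class $\NNWeights_{N_{m_k}, L_{m_k}}$ and the uniform measure $\mu_k := U(\Omega_k)$, produces some $F_k \in \calF$ with
\[
  \inf_{\Phi \in \NNWeights_{N_{m_k}, L_{m_k}}}
    \|F_k - \Indicator_{(0,\infty)} \circ R_\varrho \Phi\|_{L^1(\mu_k)}
  \geq 1/32.
\]
The $\Omega_k$'s can be chosen pairwise disjoint, using the standard observation that any shattered set of size $n + |S|$ contains a shattered subset of size $n$ disjoint from any finite $S \subset \Omega$. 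Setting $w_k := 32 \, \eps_{m_{k-1}+1}$ for $k \geq 1$ yields $\sum_{k \geq 1} w_k \leq 1$, so the formula
\[
  \mu := \sum_{k \geq 1} w_k \, \mu_k
         + \Big( 1 - \sum_{k \geq 1} w_k \Big) \, \delta_{x_0}
\]
(for an arbitrary fixed $x_0 \in \Omega$) defines a Borel probability measure on $\Omega$.

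To verify the bound, set $n_0 := m_0 + 1$ and fix $n \geq n_0$; let $k \geq 1$ be the unique index with $n \in (m_{k-1}, m_k]$. From $\mu \geq w_k \mu_k$ and $\NNWeights_{N_n, L_n} \subseteq \NNWeights_{N_{m_k}, L_{m_k}}$ (by monotonicity together with $n \leq m_k$), every $\Phi \in \NNWeights_{N_n, L_n}$ satisfies
\[
  \|F_k - \Indicator_{(0,\infty)} \circ R_\varrho \Phi\|_{L^1(\mu)}
  \geq w_k \cdot \|F_k - \Indicator_{(0,\infty)} \circ R_\varrho \Phi\|_{L^1(\mu_k)}
  \geq \frac{w_k}{32}
  = \eps_{m_{k-1}+1}
  \geq \eps_n,
\]
using monotonicity of $(\eps_n)$ and $n \geq m_{k-1}+1$ in the final step; taking the infimum over $\Phi$ and the supremum over $F \in \calF$ (noting $F_k \in \calF$) then delivers the claim. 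The main obstacle is that $(\eps_n)$ need not be summable, which would seem incompatible with any diagonal construction $\mu = \sum_n w_n \mu_n$ demanding $w_n \gtrsim \eps_n$ for every $n$, since $\sum_n w_n \leq 1$ forces the weights to decay; the subsequence trick bypasses this by expending mass only at sparse indices $m_k$ and then using the joint monotonicity of $\NNWeights_{N_n, L_n}$ and $(\eps_n)$ to extend each lower bound from $m_k$ across the entire gap $(m_{k-1}, m_k]$ with a single witness $(F_k, \mu_k)$.
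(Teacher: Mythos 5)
Your proposal is correct and follows essentially the same route as the paper's proof: pass to the monotone envelopes of $(\eps_n)$ and $(N_n,L_n)$, pick a sparse subsequence along which the error sequence is summable, obtain a witness pair $(F_k,\mu_k)$ from \Cref{prop:NoApproximationBoundsForInfiniteVC} for each subsequence index, mix these measures with weights proportional to $\eps$ plus a Dirac filler, and use monotonicity to propagate each lower bound across the whole gap between consecutive indices. The only cosmetic differences are that the paper invokes the statement of \Cref{prop:NoApproximationBoundsForInfiniteVC} directly (so the $\mu_k$ need not be uniform measures on disjoint shattered sets) and works with the envelopes $\tau_n, N_n', L_n'$ explicitly instead of a without-loss-of-generality reduction.
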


\begin{proof}
  Define $\tau_n := \sup_{k \geq n} \eps_k$, as well as $N_n ' := \max \{ N_1, \dots, N_n \}$
  and $L_n ' := \max \{ L_1, \dots, L_n \}$ for $n \in \N$.
  Note that $(\tau_n)_{n \in \N}$ is a non-increasing null-sequence;
  in particular, $\tau_n \geq 0$ for all $n \in \N$.
  Choose a strictly increasing sequence $(n_\ell)_{\ell \in \N} \subset \N$ satisfying
  $\tau_{n_\ell} \leq 2^{-5-\ell}$, so that
  \({
    \kappa
    := \sum_{\ell=1}^\infty
         \tau_{n_\ell}
    \leq 2^{-5} \sum_{\ell=1}^\infty
                  2^{-\ell}
    =    \frac{1}{32} .
  }\)
  Now, \Cref{prop:NoApproximationBoundsForInfiniteVC} yields for each $\ell \in \N$
  a Borel probability measure $\mu_\ell$ and some $F_\ell \in \calF$ satisfying
  \({
    \inf_{\Phi \in \NNWeights_{\! N_{n_\ell}', L_{n_\ell}'}}
      \| F_\ell - \Indicator_{(0,\infty)} \circ R_\varrho \Phi \|_{L^1(\mu_\ell)}
    \geq \frac{1}{32} .
  }\)
  Fix some $\omega_0 \in \Omega$ and define
  $\mu := 32 \sum_{\ell=1}^\infty \tau_{n_\ell} \, \mu_{\ell+1} + (1 - 32 \kappa) \delta_{\omega_0}$,
  so that $\mu$ is a Borel probability measure on $\Omega$.

  Now, given any $n \in \N_{\geq n_1}$, let $\ell \in \N$ with $n_\ell \leq n < n_{\ell+1}$,
  so that ${\tau_{n_\ell} = \sup_{k \geq n_\ell} \eps_k \geq \eps_n}$
  and $N_n \leq N_n ' \leq N_{n_{\ell+1}}'$ as well as $L_n \leq L_n ' \leq L_{n_{\ell+1}} '$.
  Therefore,
  \begin{align*}
    & \sup_{F \in \calF} \,\,
        \inf_{\Phi \in \NNWeights_{\! N_n, L_n}} \,\,
          \big\| F - \Indicator_{(0,\infty)} \circ R_\varrho \Phi \big\|_{L^1(\mu)} \\
    & \geq 32 \, \tau_{n_{\ell}} \cdot
           \inf_{\Phi \in \NNWeights_{\! N_{n_{\ell+1}}' , L_{n_{\ell+1}}' }}
             \| F_{\ell+1} - \Indicator_{(0,\infty)} \circ R_\varrho \Phi \|_{L^1(\mu_{\ell+1})}
      \geq \tau_{n_\ell}
      \geq \eps_n .
  \end{align*}
  Since $n \in \N_{\geq n_1}$ was arbitrary, we are done.
\end{proof}

Finally, we show that the class of Barron horizon functions (and thus also the class of sets
with boundary of Barron class) has infinite VC dimension, so that the previous results apply
in this setting.

\begin{lemma}\label{lem:BarronHorizonVCDimension}
  Let $d \geq 2$ and $Q = [-1,1]^d$, as well as $C > 0$ and $M \in \N$.
  Then
  \[
    \VC\bigl(\BarronBoundary_{C,M} (\R^d)\bigr)
    \geq \VC \bigl(\BarronHorizon_C(Q)\bigr)
    = \infty .
  \]
\end{lemma}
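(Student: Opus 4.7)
My plan for the inequality $\VC(\BarronBoundary_{C,M}(\R^d)) \geq \VC(\BarronHorizon_C(Q))$ is to associate to each horizon function $F \in \BarronHorizon_C(Q)$ the compact set $\Omega_F := F^{-1}(\{1\}) \cap Q \subset \R^d$. Writing $F = \Indicator_{\theta x_i \leq f(x^{(i)})}$ with $f \in \BarronApproximation_C([-1,1]^{d-1})$, I will partition $Q$ into $M$ axis-aligned rectangles $Q_1,\dots,Q_M$ with pairwise disjoint interiors (e.g.\ by slicing orthogonally to one fixed coordinate). On each $Q_j$, the restriction of $F$ is again a horizon function with the same $i,\theta$ and with $f$ replaced by its restriction to the projected sub-rectangle; this restriction lies in $\BarronApproximation_C$ by \eqref{eq:BarronApproximationSpaceRestriction}, so $\Indicator_{Q_j \cap \Omega_F} \in \BarronHorizon_C(Q_j)$, which shows $\Omega_F \in \BarronBoundary_{C,M}(\R^d)$. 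Continuity of $f$ (as a uniform limit of ReLU networks) ensures that $\Omega_F$ is closed, hence compact. Consequently, any set shattered by $\BarronHorizon_C(Q)$ is also shattered by $\BarronBoundary_{C,M}(\R^d)$ through $F \mapsto \Omega_F$.

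For the equality $\VC(\BarronHorizon_C(Q)) = \infty$, the plan is to exhibit, for every $n \in \N$, an $n$-point set in $Q$ shattered by $\BarronHorizon_C(Q)$. Fix distinct $t_1,\dots,t_n \in (-1,1)$ and put $P_k := (t_k,0,\dots,0) \in Q$, treating the last coordinate as the vertical axis of the horizon function. Given any sign pattern $\sigma \in \{\pm 1\}^n$, I aim to construct $f_\sigma \in \BarronApproximation_C([-1,1]^{d-1})$ with $\sign f_\sigma(t_k,0,\dots,0) = \sigma_k$; the horizon function $F_\sigma(x) := \Indicator_{x_d \leq f_\sigma(x^{(d)})}$ will then satisfy $F_\sigma(P_k) = \Indicator_{\sigma_k = +1}$, so that letting $\sigma$ range over $\{\pm 1\}^n$ realizes every one of the $2^n$ subsets of $\{P_1,\dots,P_n\}$.

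The concrete construction uses Lagrange interpolation inside the Fourier-analytic Barron class. Let $\chi \in C_c^\infty(\R)$ satisfy $\chi \equiv 1$ on $[-1,1]$, let $\ell_k$ be the one-variable Lagrange polynomials with $\ell_k(t_j) = \delta_{kj}$, and set $L_k(y) := \ell_k(y_1) \, \chi(y_1) \prod_{j=2}^{d-1} \chi(y_j)$ on $\R^{d-1}$. Each $L_k$ is Schwartz, so its Fourier transform is Schwartz; combined with $|\xi|_{[-1,1]^{d-1},0} = \|\xi\|_{\ell^1} \leq \sqrt{d-1}\,|\xi|$, this gives $L_k \in \Barron_{M_n}([-1,1]^{d-1},0)$ for some finite $M_n$ independent of $\sigma$. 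I then set $f_\sigma := \epsilon_n \sum_{k=1}^n \sigma_k L_k$ with $\epsilon_n := C/(\kappa_0 \, n \, M_n)$, where $\kappa_0$ is the absolute constant from \Cref{rem:AllBarronSetsAreSubsetsOfTheApproximationSpace}(a). The triangle inequality for the Fourier-analytic Barron norm yields $f_\sigma \in \Barron_{C/\kappa_0}([-1,1]^{d-1},0) \subset \BarronApproximation_C([-1,1]^{d-1})$, and $f_\sigma(t_k,0,\dots,0) = \epsilon_n \sigma_k$ has the required sign.

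The only delicate point is keeping the Barron norm of $f_\sigma$ uniformly below $C/\kappa_0$ across all $2^n$ choices of $\sigma$; this is precisely the purpose of the rescaling $\epsilon_n$, which can be taken arbitrarily small because shattering only depends on the signs of $f_\sigma(P_k)$, not on their magnitudes. The verification that $L_k$ lies in the Fourier-analytic Barron class is routine from the Schwartz bound $\int_{\R^{d-1}} (1+|\xi|) |\widehat{L_k}(\xi)| \, d\xi < \infty$, and the translation to the $\BarronApproximation$-scale is handled by \Cref{rem:AllBarronSetsAreSubsetsOfTheApproximationSpace}(a).
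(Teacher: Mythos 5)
Your proof is correct and follows essentially the same strategy as the paper's: the paper also shatters $n$ points lying on the hyperplane where the ``vertical'' coordinate vanishes, by taking signed combinations of smooth compactly supported functions whose Fourier-analytic Barron constants are scaled small enough, and then passing to $\BarronApproximation_C$ via \Cref{rem:AllBarronSetsAreSubsetsOfTheApproximationSpace}~a). The only differences are cosmetic — the paper uses disjointly supported bump functions (each with constant $C'/n$) instead of your Lagrange-interpolation-times-cutoff family, and it leaves the reduction from $\BarronHorizon_C(Q)$ to $\BarronBoundary_{C,M}(\R^d)$ implicit, which you spell out correctly via restriction to $M$ sub-rectangles and \eqref{eq:BarronApproximationSpaceRestriction}.
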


\begin{proof}
  Let $n \in \N$ be arbitrary.
  For each $k \in \FirstN{n}$, choose
  $\varphi_n^{(k)} \in C_c^\infty \bigl( (\frac{k-1}{n}, \frac{k}{n}) \times (-1,1)^{d-2}\bigr)$
  satisfying $\varphi_n^{(k)} \geq 0$
  and $\varphi_n^{(k)} (\frac{k-1}{n} + \frac{1}{2n}, 0, \dots, 0) = 1$.
  Define $X := [-1,1]^{d-1}$ and use \Cref{rem:AllBarronSetsAreSubsetsOfTheApproximationSpace}
  to select $C' > 0$ satisfying $\Barron_{C'}(X, 0) \subset \BarronApproximation_{C}(X)$.
  It is easy to see that there is some $\tau_n > 0$ satisfying
  $\tau_n \, \varphi_n^{(k)} \in \Barron_{C'/n}(X)$.
  Now, given $\theta = (\theta_1,\dots,\theta_n) \in \{ 0,1 \}^n$, define
  \[
    f_n^{(\theta)}
    := \tau_n \sum_{k=1}^n (2 \theta_k - 1) \varphi_n^{(k)}
    \in \Barron_{C'} (X,0)
    \subset \BarronApproximation_{C}(X) .
  \]
  This implies $H_n^{(\theta)} \in \BarronHorizon_C (Q)$, where
  \(
    H_n^{(\theta)} (x)
    := \Indicator_{f_n^{(\theta)}(x_1,\dots,x_{d-1}) \geq x_d} .
  \)
  Furthermore, in view of
  \({
    f_n^{(\theta)} (\frac{k-1}{n} + \frac{1}{2n}, 0, \dots, 0)
    = (2 \theta_k - 1) \tau_n ,
  }\)
  we see that
  \[
    H_n^{(\theta)} \bigl( \tfrac{k-1}{n} + \tfrac{1}{2n}, 0, \dots, 0 \bigr)
    = \Indicator_{(2 \theta_k - 1) \tau_n \geq 0}
    = \Indicator_{2 \theta_k \geq 1}
    = \theta_k .
  \]
  Therefore, $\BarronHorizon_C (Q)$ shatters the set
  \(
    \bigl\{
      \bigl( \frac{k-1}{n} + \frac{1}{2n}, 0 \dots, 0 \bigr)
      \colon
      k \in \FirstN{n}
    \bigr\}
    \subset Q ,
  \)
  which shows that $\VC \bigl(\BarronHorizon_C (Q)\bigr) \geq n$.
  Since this holds for every $n \in \N$, we are done.
\end{proof}


\section{Three kinds of Barron spaces}%
\label{sec:BarronSpaces}


In the literature (see for instance \cite{ma2020towards,wojtowytsch2020representation,ma2018priori}),
there are at least three different function spaces that are referred to as \emph{Barron spaces}.
In the terminology that we used in the introduction, these are the
\emph{Fourier-analytic Barron space} and the \emph{infinite-width Barron spaces,}
either using the ReLU or the Heaviside activation function.
In the current literature, the relationship between these spaces has only been understood partially.
Therefore, we clarify this issue in this section.

To fix the terminology, let us write $\CalP_d$ for the set of all Borel probability measures
on $\R \times \R^d \times \R$.
Given a (measurable) function $\phi : \R \to \R$ and $\mu \in \CalP_d$, we write
\[
  \mu_\phi (x)
  := \int_{\R \times \R^d \times \R}
       a \cdot \phi\bigl(\langle w, x \rangle + c\bigr)
     \, d \mu(a,w,c)
  \quad \text{for } x \in \R^d ,
\]
whenever the integral exists.
Let us denote the Heaviside function by $H := \Indicator_{[0,\infty)}$
and the ReLU by $\varrho : \R \to \R, x \mapsto \max \{ 0, x \}$.
Then, given a set $\emptyset \neq U \subset \R^d$ and $s \geq 0$, we define
\begin{align*}
  \Barron_H (U)
  & := \Big\{
         f : U \to \R
         \,\, \colon \,\,
         \exists \, \mu \in \CalP_d : \,
           \| \mu \|_H < \infty
           \text{ and }
           \forall \, x \in U: f(x) = \mu_H (x)
       \Big\} , \\
  \Barron_\varrho (U)
  & := \Big\{
         f : U \to \R
         \,\, \colon \,\,
         \exists \, \mu \in \CalP_d : \,
           \| \mu \|_{\varrho} < \infty
           \text{ and }
           \forall \, x \in U : f(x) = \mu_\varrho (x)
       \Big\} , \\
  \Barron_{\Fourier,s} (U)
  & := \Big\{
         f : U \to \R
         \,\, \colon \,\,
         \exists \, F : \R^d \to \CC :
         \| F \|_{\Fourier,s} \!<\! \infty
         \text{ and }
         \forall \, x \in U \! :
           f(x) \!=\!\! \int_{\R^d} \!\! e^{i \langle x,\xi \rangle} F(\xi) \, d \xi
       \Big\} ,
\end{align*}
where
\[
  \| \mu \|_H
  := \int_{\R \times \R^d \times \R}
       |a|
     \, d \mu(a,w,c)
  \quad \text{and} \quad
  \| \mu \|_\varrho
  := \int_{\R \times \R^d \times \R}
       |a| \cdot (|w| + |c|)
     \, d \mu(a,w,c) ,
\]
while $\| F \|_{\Fourier,s} := \int_{\R^d} (1 + |\xi|)^s \, |F(\xi)| \, d \xi$.
Finally, the norms on these spaces are given by
\[
  \| f \|_{\Barron_H}
  := \inf
     \big\{
       \| \mu \|_{H}
       \quad\colon\quad
       \mu \in \CalP_d \text{ and } f = \mu_H |_U
     \big\}
\]
and similarly for $\| f \|_{\Barron_\varrho}$, while
\[
  \| f \|_{\Barron_{\Fourier,s}}
  := \inf
     \Big\{
       \| F \|_{\Fourier,s}
       \colon
       F : \R^d \to \CC \text{ measurable and }
       f(x) \!=\!\! \int_{\R^d} e^{i\langle x,\xi \rangle} F(\xi) \, d \xi \text{ for all } x \in U
     \Big\} .
\]

From the literature, the following properties of these spaces are known.

\begin{lemma}\label{lem:BarronSpacesElementary}
  Let $\emptyset \neq U \subset \R^d$ be bounded.
  Then the following hold:
  \begin{enumerate}[label=\arabic*)]
    \item $\Barron_{\varrho}(U) \hookrightarrow \Barron_H (U)$.
          If $U$ has nonempty interior, then the inclusion is strict.

    \item $\Barron_{\Fourier,1}(U) \hookrightarrow \Barron_H (U)$.

    \item $\Barron_{\Fourier,2}(U) \hookrightarrow \Barron_\varrho (U)$.
  \end{enumerate}
\end{lemma}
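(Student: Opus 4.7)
The three claims rely on three different pointwise identities that allow one to convert between the activations and the Fourier representation; in each case, the desired norm bound drops out of a direct integral estimate.

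For (1), use the identity $\varrho(\tau) = \int_0^M H(\tau - t)\,dt$ valid whenever $\tau \leq M$. Set $R := \sup_{x \in U} |x|$ and $M(w,c) := R|w| + |c|$, so that $\langle w,x\rangle + c \leq M(w,c)$ for every $x \in U$. Given $f = \mu_\varrho|_U$ with $\mu \in \CalP_d$, substituting the identity into each $\varrho(\langle w,x\rangle + c)$ and absorbing the dummy $t$ into the bias coordinate produces a Heaviside representation of $f$; the associated signed measure has weighted total variation at most $\int |a|\,M(w,c)\,d\mu \leq \max(R, 1)\,\|\mu\|_\varrho$, and renormalizing into a probability measure (by pushing the total mass into the $a$-coordinate) gives $\|f\|_{\Barron_H} \leq \max(R,1)\|f\|_{\Barron_\varrho}$. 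Strictness follows from the observation that every $\mu_\varrho$ is Lipschitz with constant at most $\int |a||w|\,d\mu \leq \|\mu\|_\varrho$, whereas for any $x_0 \in U^\circ$ the function $x \mapsto H(x_1 - (x_0)_1)$ belongs to $\Barron_H(U)$ via $\mu = \delta_{(1,e_1,-(x_0)_1)}$ but is discontinuous on a ball inside $U$.

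For (2), I reuse Step~1 of the proof of Proposition~\ref{prop:BarronFunctionApproximation}: given $f \in \Barron_{\Fourier,1}(U)$, pick any $x_0 \in U$ and write $f(x) = c + \int(e^{i\langle x,\xi\rangle} - e^{i\langle x_0,\xi\rangle})\,F(\xi)\,d\xi$ with $|c| \leq \|F\|_{L^1} \leq \|F\|_{\Fourier,1}$; since $|\xi|_{U,x_0} \leq 2R|\xi|$, one has $\int |\xi|_{U,x_0}|F(\xi)|\,d\xi \leq 2R\|F\|_{\Fourier,1}$. That construction produces a signed measure $v \cdot (V_+\mu_+ - V_-\mu_-)$ on $(\R^d\setminus\{0\})\times[0,1]$ with $v \leq \int |\xi|_{U,x_0}|F(\xi)|\,d\xi$, against which a difference of Heavisides $H(\pm\langle\xi/|\xi|_{U,x_0}, x - x_0\rangle - t)$ is integrated to reconstruct $f - c$. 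Reparameterizing these Heavisides as $H(\langle w,x\rangle + c')$ (absorbing $x_0$ and the $|\xi|_{U,x_0}$-scaling into $w$ and $c'$) and collecting the at most four resulting signed measures into a single probability measure yields the desired element of $\Barron_H(U)$ with $\|\cdot\|_{\Barron_H}$-norm $\lesssim R\,\|F\|_{\Fourier,1}$.

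For (3), the key ingredient is the ReLU-Taylor identity
\[
  e^{it} - 1 - it
  = -\int_0^M \varrho(t-s)\,e^{is}\,ds
    - \int_{-M}^0 \varrho(s-t)\,e^{is}\,ds
  \qquad (|t| \leq M),
\]
verified by a short case split: for $t \in [0,M]$ the second integral vanishes and the first reduces to the Taylor remainder $-\int_0^t (t-s)e^{is}\,ds = e^{it} - 1 - it$, while for $t \in [-M,0]$ the roles swap. Applying this with $t = \langle x,\xi\rangle$ and $M = R|\xi|$ (so that $|t| \leq R|\xi|$ for all $x \in U$), integrating against $F(\xi)\,d\xi$, and taking real parts rewrites $f(x) - f(0) - \langle\nabla f(0), x\rangle$ as $\int a\,\varrho(\langle w,x\rangle + c)\,d\nu(a,w,c)$ for a finite signed measure $\nu$ concentrated on parameters with $w \in \{\pm\xi\}$, $c \in [-R|\xi|, R|\xi|]$, and $|a| \lesssim |F(\xi)|$. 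Its weighted total variation satisfies
\[
  \int |a|(|w|+|c|)\,d|\nu|
  \leq \int_{\R^d} |F(\xi)| \int_{-R|\xi|}^{R|\xi|} (|\xi|+|s|)\,ds\,d\xi
  \lesssim (R + R^2) \int_{\R^d} |\xi|^2 |F(\xi)|\,d\xi
  \leq C_R\,\|F\|_{\Fourier,2} .
\]
The constant $f(0) = \int F(\xi)\,d\xi$ and the linear term $\langle\nabla f(0), x\rangle$ are realized as trivial ReLU combinations of $\|\cdot\|_\varrho$-norm bounded by $|f(0)| + 2|\nabla f(0)| \lesssim \|F\|_{\Fourier,2}$; normalizing $|\nu|$ into a probability measure by absorbing its sign and total mass into the $a$-coordinate completes the embedding. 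The main obstacle is the bookkeeping in (3): choosing the finite cutoff $M = R|\xi|$ and splitting the ReLU kernel into a forward and a backward piece is what forces the weighted total variation to scale with $\int|\xi|^2|F(\xi)|\,d\xi$ instead of diverging, and the first-order Taylor data must be disposed of by hand since neither constants nor linear functions can be built from a single ReLU of unit weight.
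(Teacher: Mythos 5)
Your proposal is correct, and it splits naturally into two regimes when compared with the paper. For parts 1) and 2) you follow essentially the same route as the paper: in 1) the paper also writes $\varrho$ as an integral of shifted Heavisides and pushes the resulting measure forward (it rescales the argument by $\theta_{w,c}=|w|+|c|$ so that a uniform cutoff $C=1+R$ works, whereas you use the $(w,c)$-dependent cutoff $R|w|+|c|$ — same idea, same constant up to bookkeeping), and the strictness argument via Lipschitz continuity of $\Barron_\varrho$-functions versus a discontinuous Heaviside is identical (the paper cites \cite{wojtowytsch2020representation} for the Lipschitz bound; your one-line direct estimate is fine); in 2) the paper simply cites \cite[Theorem~2]{BarronNeuralNetApproximation}, which is exactly the representation you re-extract from Step~1 of \Cref{prop:BarronFunctionApproximation}, so the substance is the same — just note that the representation there uses $\Indicator_{(0,\infty)}$ rather than $H=\Indicator_{[0,\infty)}$, which you should convert via $\Indicator_{(0,\infty)}(y)=1-H(-y)$ together with the fact that constants are Heaviside atoms with $w=0$. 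For part 3) your route is genuinely different from the paper's: the paper reduces to $[0,1]^d$ by an affine change of variables, symmetrizes $F$, and then invokes \cite[Theorem~9]{ma2020towards} (itself resting on Klusowski--Barron), whereas you give a self-contained proof via the second-order Taylor/ReLU kernel identity $e^{it}-1-it=-\int_0^M\varrho(t-s)e^{is}\,ds-\int_{-M}^0\varrho(s-t)e^{is}\,ds$ for $|t|\leq M$, which I checked and which is exactly the mechanism underlying the cited results; integrating against $F(\xi)\,d\xi$ with $M=R|\xi|$, taking real parts, and handling the zeroth- and first-order Taylor data by explicit ReLU atoms gives the embedding with an explicit constant $\lesssim 1+R+R^2$ and avoids the rescaling/symmetrization detour. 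What the paper's approach buys is brevity by leaning on the literature; what yours buys is a transparent, quantitative, self-contained argument. Two cosmetic points you should make explicit: since $0$ need not lie in $U$, the quantities $f(0)$ and $\nabla f(0)$ must be read off the Fourier representation (i.e.\ $\mathrm{Re}\int F\,d\xi$ and $\mathrm{Re}\,i\int\xi F(\xi)\,d\xi$), which is exactly what taking real parts of the extended integral gives you; and in the final normalization it is the $|a|$-weighted mass (not the mass of the base measure) that must be finite, which your estimates do provide.
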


\begin{rem*}
  Regarding part \emph{1)}, an easy modification of the proof shows
  that it would in fact suffice for $U$ to satisfy
  $\{ t x + (1 - t) y \colon t \in [0,1] \} \subset U$ for certain $x \neq y$,
  even if $U$ is not open.
\end{rem*}

\begin{proof}
  \textbf{Ad 1):}
  Every function in $\Barron_\varrho (U)$ is Lipschitz continuous;
  see \mbox{\cite[Theorem~3.3]{wojtowytsch2020representation}}.
  On the other hand, choosing $\mu$ to be a Dirac measure, we see
  ${H_{w,c} = \bigl(x \mapsto H(\langle w,x \rangle + c)\bigr) \!\in \Barron_H(U)}$ for arbitrary
  $w \in \R^d$ and $c \in \R$.
  If $U$ has nonempty interior, one can choose $w,c$ in such a way that $H_{w,c}$ is discontinuous
  on $U$, and therefore cannot belong to $\Barron_{\varrho} (U)$.
  This shows that the inclusion has to be strict if $U$ has nonempty interior.

  The inclusion $\Barron_\varrho (U) \subset \Barron_H (U)$ is probably folklore;
  since we could not locate a reference, however, we provide the proof.
  Since $U$ is bounded, we have $U \subset \overline{B_R}(0)$ for a suitable $R > 0$.
  Set $C := 1 + R$ and note that
  \[
    \varrho(y) = \int_0^C H(y - t) \, d t
    \qquad \forall \, y \in \R \text{ with } |y| \leq R .
  \]
  Now, given $w \in \R^d$ and $c \in \R$, define $\theta_{w,c} := |w| + |c|$
  and note note $|\langle w, x \rangle + c| \leq C \cdot \theta_{w,c}$ for all $x \in U$.
  Recall that $\varrho(\gamma x) = \gamma \, \varrho(x)$ for $\gamma \geq 0$ and $x \in \R$.
  Therefore, given a measure $\mu \in \CalP_d$, and setting $\Omega := \R \times \R^d \times \R$,
  we see for all $x \in U$ that
  \begin{align*}
    \mu_\varrho (x)
      & = \int_{\Omega}
          a \cdot \varrho ( \langle w, x \rangle + c)
        \, d \mu(a,w,c)
      = \int_{\Omega}
          a \, \theta_{w,c}
          \cdot \varrho ( \langle \tfrac{w}{\theta_{w,c}}, x \rangle + \tfrac{c}{\theta_{w,c}})
        \, d \mu(a,w,c) \\
    & = \int_{\Omega}
          \int_0^C
            a \, \theta_{w,c}
            \cdot H ( \langle \tfrac{w}{\theta_{w,c}}, x \rangle + \tfrac{c}{\theta_{w,c}} - t)
          \, d t
        \, d \mu(a,w,c) \\
    & = \int_{\Omega}
          \alpha \cdot H(\langle \omega,x \rangle + s)
        \, d \nu(\alpha, \omega, s)
      = \nu_H (x) ,
  \end{align*}
  where $\nu := \Theta^{-1}(\mu \otimes \lambda)$ is the pushforward of the product measure
  $\mu \otimes \lambda$ (with $\lambda$ denoting the Lebesgue measure on $[0,C]$)
  under the map
  \[
    \Theta : \quad
    \Omega \times [0,C] \to \Omega, \quad
    \big( (a,w,c), t \big) \mapsto \big(
                                     a \cdot \theta_{w,c} , ~
                                     \tfrac{w}{\theta_{w,c}}, ~
                                     \tfrac{c}{\theta_{w,c}} - t
                                   \big) .
  \]
  Finally, note that
  \[
    \| \nu \|_{H}
    =\!\!
      \int_\Omega \!
        |\alpha|
      \, d \nu(\alpha,\omega,s)
    = \!\!
      \int_{\Omega}
        \int_0^C \!\!
          |a \cdot \theta_{w,c}|
        \, d t
      \, d \mu(a, w, c)
    \leq
         C \! \int_{\Omega} \! |a| \, (|w| + |c|) \, d \mu(a,w,c)
    =    C \, \| \mu \|_{\varrho} .
  \]
  This easily shows that
  $\| f \|_{\Barron_H (U)} \leq C \cdot \| f \|_{\Barron_\varrho (U)} < \infty$
  for all $f \in \Barron_\varrho (U)$.

  \medskip{}

  \noindent
  \textbf{Ad 2):}
  This follows from \cite[Theorem~2]{BarronNeuralNetApproximation}.

  \medskip{}

  \noindent
  \textbf{Ad 3):}
  This essentially follows from \cite[Theorem~9]{ma2020towards}, which is itself a consequence of
  (the proof of) \cite[Theorem~6]{KlusowskiBarronRiskBoundsRidgeFunctions}.

  More precisely, since $U \subset \R^d$ is bounded, we can choose $x_0 \in \R^d$ and
  $R \geq 1$ such that $U \subset x_0 + [0,R]^d$.
  Let $f \in \Barron_{\Fourier,2}(U)$ with $\| f \|_{\Barron_{\Fourier,2}} \leq 1$.
  This implies $f(x) = \int_{\R^d} e^{i \langle x,\xi \rangle} F(\xi) \, d \xi$ for $x \in U$,
  where $\| F \|_{\Fourier,2} \leq 2$.
  Define $G, H : \R^d \to \CC$ by $G(\xi) = \frac{1}{2} \bigl(F(\xi) + \overline{F (-\xi)}\bigr)$
  and $H(\xi) = R^{-d} \cdot e^{i \langle \frac{x_0}{R}, \xi \rangle} \cdot G(\xi / R)$.
  A direct calculation shows $\| G \|_{\Fourier,2} \leq 2$ and $\| H \|_{\Fourier,2} \leq 2 R^2$.
  Next, define $g, h : \R^d \to \R$ by
  $g(x) := \int_{\R^d} e^{i \langle x,\xi \rangle} G(\xi) \, d \xi$
  and $h(x) := \int_{\R^d} e^{i \langle x,\xi \rangle} H(\xi) \, d \xi$.
  It is straightforward to verify $h(\frac{x - x_0}{R}) = g(x) = f(x)$ for $x \in U$.

  By elementary properties of the Fourier transform, we see
  ${\int_{\R^d} |\xi|^2 \, |\widehat{h}(\xi)| \, d \xi \leq C}$ and
  ${h \in C^1}$ with $\| h \|_{\sup}, \| \nabla h \|_{\sup} \leq C$ where $C = C(d,R)$.
  Thanks to \cite[Theorem~9]{ma2020towards}, this implies
  ${\| h \|_{\Barron_\varrho ([0,1]^d)} \leq C' < \infty}$.
  Therefore, $h(y) = \int_{\Omega} a \, \varrho(c + \langle w, x \rangle) \, d \mu(a,w,c)$
  for all $y \in [0,1]^d$, where $\mu \in \CalP_d$ satisfies $\| \mu \|_{\varrho} \leq 2 C'$.
  Because of $y = \frac{x - x_0}{R} \in [0,1]^d$ for $x \in U$, this implies
  \({
    f(x)
    = h(\frac{x - x_0}{R})
    = \int_{\Omega}
        a \, \varrho
             \bigl(
               \langle \frac{w}{R}, x \rangle + c - \frac{\langle w, x_0 \rangle}{R}
             \bigr)
      \, d \mu(a,w,c)
    = \nu_\varrho (x) ,
  }\)
  where $\nu = \Psi^\ast \mu$ is the pushforward of $\mu$ under the map
  \(
    \Psi :
    \Omega \to \Omega ,
    (a,w,c) \mapsto \big(
                      a, \frac{w}{R}, c - \frac{\langle w,x_0 \rangle}{R}
                    \big) .
  \)
  A direct calculation shows $\| \nu \|_{\varrho} \leq (1 + |x_0|) \| \mu \|_{\varrho} \leq C''$
  for $C'' = C'' (d,R,x_0)$.
  Hence, $f \in \Barron_\varrho(U)$ with $\| f \|_{\Barron_\varrho(U)} \leq C''$.
\end{proof}

The previous lemma collected several relations between the different Barron-type spaces
from the literature.
The question of how the spaces $\Barron_\varrho$ and $\Barron_{\Fourier,1}$ are related,
however, has, to the best of our knowledge, not been answered until now.
While it is claimed in \cite[Theorem~3.1]{wojtowytsch2020representation} that
$\Barron_{\Fourier,1}$ embeds continuously into $\Barron_{\varrho}$,
citing \cite{BarronUniversalApproximation} as a reference,
we believe that this mischaracterizes the results of \cite{BarronUniversalApproximation}. 
In fact, in \cite{BarronUniversalApproximation} (or rather \cite{BarronNeuralNetApproximation}),
it is merely shown that $\Barron_{\Fourier,1}$ embeds into $\Barron_{H}$, not $\Barron_\varrho$.
As we will see in \Cref{prop:FourierBarronNotInReLUBarron} below,
we actually have $\Barron_{\Fourier,1} \nsubseteq \Barron_{\varrho}$.
The proof will be based on the following lemma, which shows that the partial derivatives
of functions in $\Barron_\varrho$ are ``uniformly of bounded variation along the coordinate axes''.
This lemma is similar in spirit to \cite[Example~4.1]{wojtowytsch2020representation},
which essentially corresponds to the one-dimensional case of the result given here.
In the following lemma, we use for a Lipschitz continuous function $g : \R^d \to \R$,
$i,j \in \FirstN{d}$, and $x \in \R^d$, the following functions
\begin{equation}
  g_{j,i,x} : \quad
  \R \to \R, \quad
  t \mapsto (\partial_j g) (x + t e_i) ,
  \label{eq:SpecialFunctionDefinition}
\end{equation}
where $(e_1,\dots,e_d)$ denotes the standard basis of $\R^d$.

\begin{lemma}\label{lem:ReLUBarronBoundedVariation}
  Let $\emptyset \neq U \subset \R^d$ be bounded.
  For every $f \in \Barron_{\varrho}(U)$, there exists a Lipschitz continuous function
  $g : \R^d \to \R$ satisfying $f = g|_U$ and
  \begin{equation}
    \sup_{i,j \in \FirstN{d}, x \in \R^d}
      \| g_{j,i,x} \|_{BV}
    \leq 4 \, \| f \|_{\Barron_\varrho} ,
    \label{eq:ReLUBarronBV}
  \end{equation}
  where we write $\| h \|_{BV} := \| h \|_{\sup} + \mathrm{TV}(h)$ for $h : \R \to \R$,
  with $\mathrm{TV}(h)$ denoting the total variation of $h$;
  see for instance \cite[Chapter~3.5]{FollandRA} for the definition.
\end{lemma}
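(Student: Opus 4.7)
The plan is to construct $g$ as the obvious extension of the integral representation of $f$ to all of $\R^d$, and then to exploit that $\partial_j g$ has a Heaviside as its $\mu$-integrand, which is evidently of bounded variation along any line.

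First, I would pick $\mu \in \CalP_d$ with $f = \mu_\varrho|_U$ and $\|\mu\|_\varrho \leq 2\|f\|_{\Barron_\varrho}$ (chosen once, independently of $i,j,x$), and define $g : \R^d \to \R$ by
$$g(x) := \int_{\R \times \R^d \times \R} a\, \varrho(\langle w,x\rangle + c)\, d\mu(a,w,c).$$
Since $\varrho$ is $1$-Lipschitz, $|a\varrho(\langle w,x\rangle+c) - a\varrho(\langle w,y\rangle+c)| \leq |a|\cdot|w|\cdot|x-y|$, so integrating gives $|g(x)-g(y)| \leq \|\mu\|_\varrho\cdot|x-y|$; hence $g$ is Lipschitz and agrees with $f$ on $U$.

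Next, I would introduce the everywhere-defined ``candidate partial derivative''
$$G_j(x) := \int a\, w_j\, H(\langle w,x\rangle + c)\, d\mu(a,w,c), \qquad H := \Indicator_{[0,\infty)}.$$
The integrand is dominated by $|a|\cdot|w|$, which is $\mu$-integrable. For almost every $x$, Fubini gives $\mu(\{(a,w,c) : \langle w,x\rangle + c = 0\}) = 0$, and then dominated convergence applied to the difference quotient $\tfrac{g(x+he_j)-g(x)}{h}$ shows $\partial_j g(x) = G_j(x)$ a.e. Since Rademacher only guarantees $\partial_j g$ almost everywhere, I choose $G_j$ as the pointwise representative and set $g_{j,i,x}(t) := G_j(x+te_i)$.

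For the BV estimate, fix $i,j \in \FirstN{d}$ and $x \in \R^d$. Then
$$g_{j,i,x}(t) = \int a\, w_j\, H\bigl(\langle w,x\rangle + c + tw_i\bigr)\, d\mu(a,w,c),$$
and for each fixed $(a,w,c)$ the inner function $k_{a,w,c}(t) := aw_j H(\langle w,x\rangle + c + tw_i)$ is either constant in $t$ (if $w_i=0$) or a single step of height $|aw_j|$ (if $w_i \neq 0$); either way $\|k_{a,w,c}\|_{BV} \leq 2|aw_j| \leq 2|a|\cdot|w|$. A direct Fubini argument shows that $\|\cdot\|_{BV}$ passes through the $\mu$-integral: the supremum part follows from $\sup_t \int \leq \int \sup_t$, while for the total variation one bounds each partition sum $\sum |g_{j,i,x}(t_k) - g_{j,i,x}(t_{k-1})| \leq \int \sum |k_{a,w,c}(t_k) - k_{a,w,c}(t_{k-1})|\, d\mu \leq \int \mathrm{TV}(k_{a,w,c})\, d\mu$ and then takes the supremum over partitions on the left. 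Combining,
$$\|g_{j,i,x}\|_{BV} \leq \int 2|aw_j|\, d\mu \leq 2\int |a|\cdot|w|\, d\mu \leq 2\|\mu\|_\varrho \leq 4\|f\|_{\Barron_\varrho},$$
uniformly in $i,j,x$.

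The main technical care points are (a) committing to a single pointwise version of the a.e.-defined partial derivative, which I resolve by working with $G_j$ from the start; (b) the Fubini step for the total variation, which is elementary but must be done at the level of partition sums rather than by a formal ``integral of $BV$-norms'' identity; and (c) ensuring that a single $\mu$ (and hence a single $g$) works uniformly, achieved by paying a factor of $2$ upfront in the choice of $\mu$, which is the source of the constant $4$.
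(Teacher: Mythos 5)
Your proposal is correct and follows the same overall strategy as the paper: define $g := \mu_\varrho$ for a near-optimal representing measure $\mu$, check that $g$ is Lipschitz, identify a pointwise version of $\partial_j g$ with the Heaviside integral $x \mapsto \int a\,w_j\,H(\langle w,x\rangle+c)\,d\mu$, and bound the variation of this integral along coordinate lines. The only substantive divergence is in the last step: the paper splits the measure into four pieces according to the signs of $a w_j$ and $w_i$, so that each piece yields a monotone bounded function, and then invokes $\| h \|_{\BV} \leq 3 \| h \|_{\sup}$ for monotone $h$; you instead push partition sums through the integral (a Fubini/triangle-inequality argument at the level of partitions), using that each integrand $t \mapsto a w_j H(\langle w,x\rangle + c + t w_i)$ is a single step with $\| \cdot \|_{\BV} \leq 2|a w_j|$. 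Your route is slightly more direct and arrives at the same constant $4$ (via $2\|\mu\|_\varrho \leq 4\|f\|_{\Barron_\varrho}$ rather than the paper's $3 \cdot \tfrac{5}{4}\|f\|_{\Barron_\varrho}$); the paper's route also differs mildly in how the derivative is identified (weak differentiation against test functions rather than your a.e.\ difference-quotient argument), but both are sound. The only cosmetic omission is the degenerate case $\| f \|_{\Barron_\varrho} = 0$, where a measure with $\| \mu \|_\varrho \leq 2 \| f \|_{\Barron_\varrho}$ need not exist; the paper disposes of this case separately in one line, and you should do the same.
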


\begin{rem*}
  The partial derivative $\partial_j g$ appearing in \Cref{eq:SpecialFunctionDefinition}
  above is the weak derivative of $g$,
  and thus a priori only uniquely defined up to changes on a null-set.
  What is meant is that there is a version of this derivative such that
  $g_{j,i,x}$ is of bounded variation for all $i,j \in \FirstN{d}$ and $x \in \R^d$,
  and such that \Cref{eq:ReLUBarronBV} holds.
\end{rem*}

\begin{proof}
  The claim is clear in case of $\| f \|_{\Barron_\varrho} = 0$;
  thus, let us assume that $\| f \|_{\Barron_\varrho \strut} > 0$.
  By definition of $\Barron_\varrho (U)$ there is a probability measure
  $\mu \in \CalP_d$ satisfying $\| \mu \|_\varrho \leq \frac{5}{4} \| f \|_{\Barron_\varrho}$
  and $f = \mu_\varrho |_U$.
  Define $\Omega := \R \times \R^d \times \R$ and $g := \mu_\varrho$,
  noting that $g : \R^d \to \R$ is well-defined, since
  \[
    \int_{\Omega}
      |a \, \varrho(\langle w,x \rangle + c)|
    \, d \mu(a,w,c)
    \leq (1 + |x|)
         \int_{\Omega}
           |a| \cdot (|w| + |c|)
         \, d \mu(a,w,c)
    < \infty
  \]
  for each $x \in \R^d$.
  Furthermore, since $\varrho$ is $1$-Lipschitz, we see for all $x,y \in \R^d$ that
  \[
    |g(x) - g(y)|
    \leq \int_{\Omega}
           |a| \cdot |\langle w, x-y \rangle|
         \, d \mu(a,w,c)
    \leq |x-y|
         \int_{\Omega}
           |a| \cdot (|w| + |c|)
         \, d \mu(a,w,c)
    \leq |x-y| \cdot \| \mu \|_{\varrho} ,
  \]
  meaning that $g$ is Lipschitz continuous.

  Now, note that $x \mapsto \varrho(\langle w,x \rangle + c)$ either vanishes identically
  (in case of $w = c = 0$) or otherwise is differentiable on
  ${\{ x \in \R^d \colon \langle w,x \rangle + c \neq 0 \}}$
  which is open and of full measure, with partial derivatives
  $\partial_j [\varrho (\langle w,x \rangle + c)] = w_j H(\langle w,x \rangle + c)$.
  Furthermore, ${x \mapsto \varrho(\langle w,x \rangle + c)}$ is Lipschitz continuous and hence
  weakly differentiable, and the weak derivative coincides almost everywhere with
  the classical derivative; see for instance \cite[Theorems~4 and 5 in Section~5.8]{EvansPDE}.
  Therefore, for any $\varphi \in C_c^\infty (U)$ and $j \in \FirstN{d}$,
  Fubini's theorem shows that
  \begin{align*}
    \int_U
      f(x) \, \partial_j \varphi (x)
    \, d x
    & = \int_{\Omega}
          a
          \int_U
            \varrho (\langle w, x \rangle + c)
            \partial_j \varphi(x)
          \, d x
        \, d \mu(a,w,c) \\
    & = - \int_{\Omega}
            \int_U
              a \, w_j \, H(\langle w,x \rangle + c)
              \varphi(x)
            \, d x
          \, d \mu(a,w,c)
      = - \int_U \varphi(x) g_j (x) \, d x ,
  \end{align*}
  meaning that $g_j = \partial_j g$ for
  \[
    g_j : \quad
    \R^d \to \R, \quad
    x \mapsto \int_U a \, w_j \, H(\langle w,x \rangle + c) \, d \mu(a,w,c) .
  \]

  Now, using the convention $\sign(x) = 1$ for $x \geq 0$ and $\sign (x) = - 1$ if $x < 0$,
  given $i,j \in \FirstN{d}$, define
  \[
    M_{\alpha,\beta}
    := \bigl\{
         (a,w,c) \in \Omega
         \,\,\colon\,
         \sign (a w_j) = \alpha \text{ and } \sign(w_i) = \beta
       \bigr\}
    \quad \text{for} \quad
    \alpha,\beta \in \{ \pm 1 \} .
  \]
  Since the Heaviside function $H$ is non-decreasing, it is then straightforward to see
  for each $x \in \R^d$ that each of the functions
  \(
    F_{\alpha,\beta,x} :
    \R \to \R,
    t \mapsto \int_{M_{\alpha,\beta}}
                a \, w_j \, H(w_i \, t + \langle w, x \rangle + c)
              \, d \mu(a,w,c)
  \)
  is monotonic and $g_{j,i,x} = \sum_{\alpha,\beta \in \{ \pm 1 \}} F_{\alpha,\beta,x}$.
  Furthermore, each of the $F_{\alpha,\beta,x}$ is bounded; precisely,
  \[
    |F_{\alpha,\beta,x} (t)|
    \leq \int_{M_{\alpha,\beta}}
           |a| \, |w_j|
         \, d \mu(a,w,c)
    \leq \int_{M_{\alpha,\beta}}
           |a| \cdot (|w| + |c|)
         \, d \mu(a,w,c) ,
  \]
  so that
  $\sum_{\alpha,\beta \in \{ \pm 1 \}} \| F_{\alpha,\beta,x} \|_{\sup} \leq \| \mu \|_\varrho$.
  It is easy to see (see \cite[Section~3.5]{FollandRA}) that every monotonic function
  $h : \R \to \R$ satisfies $\| h \|_{\BV} \leq 3 \| h \|_{\sup}$.
  Therefore, $g_{j,i,x}$ is of bounded variation with
  \[
    \| g_{j,i,x} \|_{\BV}
    \leq \sum_{\alpha,\beta \in \{ \pm 1 \}}
           \| F_{\alpha,\beta,x} \|_{\BV}
    \leq 3 \| \mu \|_\varrho
    \leq \frac{15}{4} \| f \|_{\Barron_\varrho} ,
  \]
  which easily implies the claim.
\end{proof}

We will also need the following technical lemma.
It is a well-known property of BV functions;
see for instance the proof of \cite[E6.10]{AltFA}.
For the sake of completeness and for readers unfamiliar with
functions of bounded variation, we provide a proof in Appendix~\ref{sec:TotalVariationTechnical}.

\begin{lemma}\label{lem:TechnicalBVBound}
  Let $g : \R \to \R$ be bounded and of bounded variation.
  Then, for arbitrary $\varphi \in C_c^\infty (\R)$, we have
  \(
    |\int_{\R} \varphi'(t) \, g(t) d t|
    \leq \| \varphi \|_{\sup} \, \mathrm{TV}(g) .
  \)
\end{lemma}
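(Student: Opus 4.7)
The plan is to reduce to the smooth case by mollification. I would fix a nonnegative $\eta \in C_c^\infty(\R)$ with $\int_\R \eta = 1$, set $\eta_n(t) := n \, \eta(n t)$ for $n \in \N$, and define $g_n := g \ast \eta_n \in C^\infty(\R)$. Three properties of $g_n$ are needed: (i) $\| g_n \|_{L^\infty} \leq \| g \|_{L^\infty}$, which is immediate since $\eta_n$ is a probability density; (ii) $g_n(t) \to g(t)$ for almost every $t$, which follows from Lebesgue's differentiation theorem (the boundedness of $g$ makes it locally integrable, and $g_n(t) \to g(t)$ at every Lebesgue point); and (iii) $\mathrm{TV}(g_n) \leq \mathrm{TV}(g)$.

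For (iii), I would argue directly from the definition: for any partition $t_0 < t_1 < \cdots < t_k$, Minkowski's inequality and Fubini give
\[
  \sum_{i=0}^{k-1} |g_n(t_{i+1}) - g_n(t_i)|
  \leq \int_\R \eta_n(s) \sum_{i=0}^{k-1} |g(t_{i+1}-s) - g(t_i-s)| \, ds
  \leq \mathrm{TV}(g),
\]
using that the total variation is translation invariant. Taking the supremum over all partitions then yields $\mathrm{TV}(g_n) \leq \mathrm{TV}(g)$.

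With the smooth approximant in hand, classical integration by parts (valid because $g_n \in C^\infty$ and $\varphi$ has compact support, so the boundary terms vanish) gives
\[
  \int_\R \varphi'(t) \, g_n(t) \, dt = - \int_\R \varphi(t) \, g_n'(t) \, dt ,
\]
whence
\[
  \left| \int_\R \varphi'(t) \, g_n(t) \, dt \right|
  \leq \| \varphi \|_{L^\infty} \int_\R |g_n'(t)| \, dt
  =    \| \varphi \|_{L^\infty} \cdot \mathrm{TV}(g_n)
  \leq \| \varphi \|_{L^\infty} \cdot \mathrm{TV}(g).
\]
Finally, since $|g_n(t) \, \varphi'(t)| \leq \| g \|_{L^\infty} \, |\varphi'(t)|$ with $\varphi' \in L^1(\R)$ (being compactly supported and continuous), and $g_n \to g$ almost everywhere, the dominated convergence theorem yields $\int_\R \varphi'(t) \, g_n(t) \, dt \to \int_\R \varphi'(t) \, g(t) \, dt$, and the desired inequality survives in the limit.

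I do not anticipate any serious obstacle; each step is standard. The one point requiring care is the TV estimate (iii), but the partition argument above makes it transparent. An alternative route would be to write the statement as Lebesgue--Stieltjes integration by parts, $\int \varphi' \, g \, dt = - \int \varphi \, d g$, bounding by $\| \varphi \|_{L^\infty} \cdot |d g|(\R) = \| \varphi \|_{L^\infty} \cdot \mathrm{TV}(g)$; this would require choosing a right-continuous representative of $g$ and invoking the Jordan decomposition, so the mollification approach seems cleaner.
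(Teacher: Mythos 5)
Your argument is correct, but it takes a genuinely different route from the paper's proof. The paper argues via the Jordan decomposition: it sets $g_i := \tfrac{1}{2}(T_g \pm g)$, where $T_g$ is the total variation function, so that $g = g_1 - g_2$ with each $g_i$ bounded and non-decreasing; for a monotone piece $h$ it passes to a right-continuous $\mathrm{NBV}$ representative (which changes $h$ only on the at most countable set of discontinuities, hence on a null set), identifies it with the distribution function of a finite Borel measure $\mu$, and applies Lebesgue--Stieltjes integration by parts to obtain $|\int_\R \varphi'(t)\,h(t)\,dt| \leq \|\varphi\|_{L^\infty}\,\mu(\R)$; summing the two pieces yields $\mathrm{TV}(g)$. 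You instead regularize: you mollify $g$, show that convolution with a probability density does not increase total variation (your partition argument is correct), use classical integration by parts plus the identity $\int_\R |g_n'| = \mathrm{TV}(g_n)$ for the smooth approximants, and pass to the limit by dominated convergence using $g_n \to g$ almost everywhere (valid at every Lebesgue point, or alternatively at every continuity point of $g$, of which all but countably many exist). Both routes are complete; the paper's proof is essentially the Stieltjes-measure alternative you sketch at the end, leaning on standard $\mathrm{NBV}$ facts, while yours trades that measure-theoretic bookkeeping about representatives for routine mollification estimates. Two small points worth making explicit in your write-up: $g$ is measurable (being a difference of monotone functions), so the convolution is well defined; and the identity $\int_{-R}^{R}|g_n'| = \mathrm{TV}_{[-R,R]}(g_n)$ for $C^1$ functions gives $\int_\R |g_n'| \leq \mathrm{TV}(g_n)$ after letting $R \to \infty$, which is all you need.
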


With these preparations, we can finally show that for most domains $U$,
we have that $\Barron_{\Fourier,1}(U)$ is not contained in $\Barron_{\varrho}(U)$.

\begin{proposition}\label{prop:FourierBarronNotInReLUBarron}
  Let $U \subset \R^d$ have nonempty interior and let $\alpha \geq 0$.
  If $\Barron_{\Fourier,\alpha}(U) \subseteq \Barron_\varrho(U)$, then $\alpha \geq 2$.
  In particular, $\Barron_{\Fourier,1}(U) \nsubseteq \Barron_\varrho (U)$.
\end{proposition}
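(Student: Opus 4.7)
The strategy is to fix an arbitrary $\alpha \in [0,2)$ and exhibit an explicit $f \in \Barron_{\Fourier,\alpha}(U) \setminus \Barron_\varrho(U)$. Since $U$ has nonempty interior, I pick a point $x_0 = (t_0, x_0') \in U^\circ$ and a bump $\phi \in C_c^\infty(U)$ with $\phi(t_0, x_0') = 1$. Fix $\gamma \in (\max\{\alpha,1\}, 2)$ and define
\[
  W(t) := \sum_{n=1}^\infty 2^{-n\gamma} \sin(2^n t),
  \qquad f(x) := \phi(x) \, W(x_1) .
\]
Since $\gamma > 1$, $W'(t) = \sum_n 2^{n(1-\gamma)} \cos(2^n t)$ converges absolutely and uniformly, so $W$ is bounded and Lipschitz and $f$ is continuous with compact support in $U$. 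Computing the inverse Fourier transform of $f$ as a superposition of translates of $\widehat{\phi}$ centered at $\pm 2^n e_1$ with weights of order $2^{-n\gamma}$, and using the Schwartz decay of $\widehat{\phi}$, one finds $\int (1+|\xi|)^\alpha |F(\xi)| \, d\xi \lesssim \sum_n 2^{-n(\gamma-\alpha)} < \infty$, hence $f \in \Barron_{\Fourier,\alpha}(U)$.

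Now assume for contradiction that $f \in \Barron_\varrho(U)$. By \Cref{lem:ReLUBarronBoundedVariation} there exists a Lipschitz extension $g : \R^d \to \R$ of $f|_U$ such that $t \mapsto \partial_1 g(t, x_0')$ is of bounded variation on $\R$. Choose an open interval $I' \ni t_0$ with $I' \times \{x_0'\} \subset U$ and $\phi(\cdot, x_0') \geq c > 0$ on $I'$. On $I'$ we have $\partial_1 g(\cdot, x_0') = (\partial_1\phi)(\cdot, x_0') \, W + \phi(\cdot, x_0') \, W'$, and the first summand is Lipschitz (product of smooth and Lipschitz) and hence BV. Consequently $\phi(\cdot, x_0') \, W'$ is BV on $I'$, and dividing by the smooth, strictly positive $\phi(\cdot, x_0')$ yields that $W'$ itself is of bounded variation on $I'$.

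The heart of the argument is to contradict this by showing $W'$ is not BV on any nondegenerate open interval. Pick $\eta \in C_c^\infty(I')$ with $\eta \geq 0$ and $\int \eta(t) \, dt > 0$, and set $\varphi_k(t) := \eta(t)\sin(2^k t)$, so that $\|\varphi_k\|_\infty \leq \|\eta\|_\infty$ uniformly in $k \in \N$. Then \Cref{lem:TechnicalBVBound}, applied to any bounded BV extension of $W'|_{I'}$ to all of $\R$ (e.g.\ by $0$ outside a neighborhood of $\overline{I'}$, picking up only bounded endpoint jumps), would force
\[
  \Big| \int_\R \varphi_k'(t) \, W'(t) \, dt \Big| \leq C
  \qquad \text{uniformly in } k .
\]
Expanding $\varphi_k'(t) = \eta'(t) \sin(2^k t) + 2^k \eta(t) \cos(2^k t)$ and integrating termwise against $W'(t) = \sum_m 2^{m(1-\gamma)} \cos(2^m t)$, the diagonal ($m = k$) contribution equals $2^{k(2-\gamma)} \int \eta(t) \cos^2(2^k t) \, dt$, which tends to $\tfrac{1}{2} \cdot 2^{k(2-\gamma)} \int \eta(t)\, dt$ and diverges as $k \to \infty$ since $\gamma < 2$, contradicting the uniform bound above.

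The main technical obstacle is showing that the off-diagonal ($m \neq k$) terms stay $O(1)$ uniformly in $k$, rather than overwhelming or canceling the diagonal. This rests on the lacunary spacing $|2^m \pm 2^k| \geq 2^{\max(m,k)-1}$ combined with the Schwartz decay of $\widehat{\eta}$: each cross-term is bounded by $C_N \, (2^k + 1) \, 2^{m(1-\gamma)} \, 2^{-N \max(m,k)}$ for any $N \in \N$. Splitting the sum into $m < k$ (absolutely convergent in $m$ since $\gamma > 1$, with overall factor $2^{k(1-N)}$) and $m > k$ (a geometric series in $m$ with ratio $2^{1-\gamma-N} < 1$ for $N$ large) yields a bound independent of $k$. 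This is the only place where both the lacunarity of the frequencies $\lambda_n = 2^n$ and the restriction $\gamma > 1$ are essential; once it is carried out, the diagonal genuinely dominates and the contradiction closes the proof.
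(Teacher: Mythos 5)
Your proposal is correct in substance but takes a genuinely different route from the paper. The paper argues by uniform boundedness: assuming the inclusion, it combines \Cref{lem:ReLUBarronBoundedVariation} and \Cref{lem:TechnicalBVBound} with the closed graph theorem to produce a bounded operator from the weighted Fourier space $X$ into a dual space $Y'$, which yields the quantitative estimate $\int_{-2\eps}^{2\eps} |(\partial_1^2 f)(x_0+t e_1)|\,dt \lesssim \|f\|_X$ for all smooth $f \in X$; testing this on the family consisting of a fixed smooth cutoff times $\cos\bigl(n\pi (x-x_0)_1/\eps\bigr)$, whose $X$-norm grows like $n^{\beta}$ while the left-hand side grows like $n^2$, forces $\beta \geq 2$. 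You instead exhibit, for each $\alpha<2$, a single explicit witness $f=\phi\cdot W(x_1)$ with $W$ a lacunary series of exponent $\gamma\in(\max\{\alpha,1\},2)$: membership in $\Barron_{\Fourier,\alpha}(U)$ follows from the translate structure of $\widehat f$ and Schwartz decay, and non-membership in $\Barron_\varrho(U)$ follows because $W'$ is not of bounded variation on any interval, which you prove self-containedly by testing against $\eta(t)\sin(2^k t)$, letting the diagonal term grow like $2^{k(2-\gamma)}$ while lacunarity keeps the off-diagonal terms bounded. Both routes rest on the same two lemmas; yours avoids the closed graph theorem and is more constructive, at the cost of the (correctly executed) cross-term estimates.

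One step needs a small repair: \Cref{lem:ReLUBarronBoundedVariation} provides a version of the weak derivative $\partial_1 g$ that is BV along every line, but the identity $\partial_1 g = \partial_1 f$ is only guaranteed almost everywhere on $U$ with respect to $d$-dimensional Lebesgue measure, so you may not assert it on the specific null line $\{x' = x_0'\}$. This is precisely the point the paper handles by working on almost every shifted line and then letting the shift tend to zero using continuity of $\partial_1 f$. In your setting the fix is one line: by Fubini, for almost every $z'$ near $x_0'$ the identity $\partial_1 g(\cdot,z') = (\partial_1\phi)(\cdot,z')\,W + \phi(\cdot,z')\,W'$ holds a.e. on $I'$, and continuity of $\phi$ keeps $\phi(\cdot,z') \geq c/2$ there; since your contradiction only uses the integral bound of \Cref{lem:TechnicalBVBound}, a.e. agreement of $W'$ with a BV function on $I'$ is all you need.
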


\begin{proof}
  The proof is divided into three steps:

  \textbf{Step 1} \emph{(Setup of Banach spaces $X, Y'$):}
  We define $\beta := \max \{ 1, \alpha \}$ and
  \[
    X := \bigl\{
           \Fourier^{-1} f
           \,\, \colon \,\,
           f \in L_{(1+|\xi|)^\beta}^1 (\R^d; \CC)
           \text{ and } \Fourier^{-1} f \text { is real-valued}
         \bigr\}
  \]
  with norm
  \(
    \| \Fourier^{-1} f \|_X
    := \| f \|_{L_{(1+|\xi|)^{\beta}}^1}
    = \int_{\R^d} (1 + |\xi|)^{\beta} \, |f(\xi)| \, d \xi ,
  \)
  which is well-defined since the Fourier transform is injective on
  $L^1(\R^d) \supset L_{(1+|\xi|)^\beta}^1 (\R^d)$.
  It is straightforward to verify that $X$ is a Banach space (with $\R$ as the scalar field,
  since we require $\Fourier^{-1} f$ to be real-valued for $f \in X$),
  and by differentiation under the integral it is easy to see that $X \hookrightarrow C_b^1 (\R^d)$,
  where
  \(
    C_b^1 (\R^d)
    = \{
        g \in C^1(\R^d; \R)
        \colon
        \| g \|_{C_b}^1 < \infty
      \}
  \)
  and $\| g \|_{C_b^1} := \| g \|_{\sup} + \sum_{j=1}^d \| \partial_j g \|_{\sup}$.

  Since $U$ has nonempty interior, we have $U_0 := x_0 + (-3\eps, 3\eps)^d \subset U$
  for certain $x_0 \in \R^d$ and $\eps \in (0, 1)$.
  Let $Y := \bigl( C_c^\infty ( (-2\eps, 2\eps); \R), \| \cdot \|_{\sup} \bigr)$,
  and let $Y'$ denote the dual space of $Y$.
  Note that $Y'$ is a Banach space (see for instance \cite[Proposition~5.4]{FollandRA}),
  even though $Y$ is not.

  \medskip{}

  \noindent
  \textbf{Step 2} \emph{(Constructing a bounded operator $\Gamma : X \to Y'$):}
  Assume that $\Barron_{\Fourier,\alpha} \subset \Barron_\varrho (U)$.
  Then, since $|\xi|^\alpha \leq (1+|\xi|)^\beta$, we have for
  $f \in X$ that $f|_U \in \Barron_{\Fourier,\alpha}(U) \subset \Barron_\varrho (U)$,
  so that \Cref{lem:ReLUBarronBoundedVariation} shows that there is a Lipschitz continuous function
  $g : \R^d \to \R$ satisfying $g|_U = f|_U$ and such that for some choice of the weak derivative
  $\partial_1 g$ of $g$, if we set $e_1 = (1,0,\dots,0) \in \R^d$ and
  $g_x : \R \to \R, t \mapsto (\partial_1 g)(x + t \, e_1)$, then
  $\sup_{x \in \R^d} \| g_x \|_{\BV} \leq 4 \, \| f|_U \|_{\Barron_\varrho (U)} =: C_f$.

  Since $f|_{U_0} = g|_{U_0}$ and $f$ is continuously differentiable,
  we have $\partial_1 g = \partial_1 f$ almost everywhere on $U_0$.
  By Fubini's theorem, this implies for almost every $z \in (-\eps,\eps)^{d-1}$ that
  \(
    g_{x_0 + (0,z)}(t)
    = (\partial_1 g)(x_0 + (0,z) + t \, e_1)
    = (\partial_1 f)(x_0 + (0,z) + t \, e_1)
  \)
  for almost all $t \in (-2\eps, 2\eps)$.

  For arbitrary $\varphi \in Y$ and $z$ as above, we thus see by \Cref{lem:TechnicalBVBound} that
  \[
    \Big| \!
      \int_{\R} \!
        \varphi'(t) \,
        (\partial_1 f) \! \bigl( x_0 + (0,z) + t \, e_1 \bigr)
      d t
    \Big|
    \!=\! \Big|\! \int_{\R} \! \varphi'(t) \, g_{x_0 + (0,z)} (t) d t \Big|
    \!\leq\! \big\| g_{x_0 + (0,z)} \big\|_{\BV} \| \varphi \|_{\sup}
    \!\leq C_f \, \| \varphi \|_{\sup} .
  \]
  Recall that this holds for almost all $z \in (-\eps,\eps)^{d-1}$, and thus in particular
  for a dense subset of $(-\eps,\eps)^{d-1}$.
  By continuity of $\partial_1 f$, we can thus take the limit $z \to 0$ to see that
  \(
    |\int_{\R} \varphi'(t) \, (\partial_1 f) (x_0 + t \, e_1) \, d t|
    \leq C_f \cdot \| \varphi \|_{\sup}
  \)
  for all $\varphi \in Y$.
  We have thus shown that the linear map
  \[
    \Gamma : \quad
    X \to Y', \quad
    f \mapsto \Big(
                \varphi \mapsto \int_{\R} \varphi'(t) \, (\partial_1 f) (x_0 + t \, e_1) \, d t
              \Big)
  \]
  is well-defined.
  Note that if $f_n \xrightarrow[n\to\infty]{X} f$, then $\partial_1 f_n \to \partial_1 f$
  with uniform convergence.
  Using this observation, it is straightforward to verify that $\Gamma$ has closed graph,
  and is thus a bounded linear map, thanks to the closed graph theorem.

  Finally, note that if $f \in X \cap C^2(\R^d)$, then we see by partial integration that
  \[
    \Big|\! \int_{\R} \varphi(t) \, (\partial_1^2 f) (x_0 + t \, e_1) \, d t \Big|
    = \Big|\! \int_{\R} \varphi'(t) \, (\partial_1 f) (x_0 + t \, e_1) \, d t \Big|
    \leq \| \Gamma f \|_{Y'} \| \varphi \|_{\sup}
    \leq \| \Gamma \| \, \| f \|_X \, \| \varphi \|_{\sup}
  \]
  for all $\varphi \in Y = C_c^\infty ( (-2\eps, 2\eps); \R)$.
  By the dual characterization of the $L^1$-norm (see for instance \cite[Corollary~6.13]{AltFA}),
  this implies
  \begin{equation}
    \int_{-2\eps}^{2\eps}
      \big| (\partial_1^2 f)(x_0 + t \, e_1) \big|
    \, d t
    \leq \| \Gamma \| \cdot \| f \|_X
    \qquad \forall \, f \in X \cap C^2(\R^d) .
    \label{eq:FourierBarronReLUBarronAlmostContradiction}
  \end{equation}

  \medskip{}

  \noindent
  \textbf{Step 3} \emph{(Completing the proof):}
  Pick $\gamma \in C_c^\infty (\R^d)$ with $0 \leq \gamma \leq 1$ and such that $\gamma \equiv 1$
  on ${U_0 = x_0 + (-3\eps, 3\eps)^d}$.
  For $n \in \N$, define
  \(
    f_n :
    \R^d \to \R,
    x \mapsto \gamma(x)
              \cdot \cos
                    \bigl(
                      \frac{n \pi}{\eps} (x - x_0)\cdot e_1
                    \bigr)
    .
  \)
  Writing $T_y f (x) = f(x-y)$ and $M_\xi f (x) = e^{i \langle x,\xi \rangle} \, f(x)$
  for translation and modulation,
  and using the identity $\cos(x) = \frac{1}{2} (e^{i x} + e^{- i x})$, it is easy to see
  \(
    f_n
    = \frac{1}{2}
      T_{x_0}
      \big[
        M_{n \pi e_1 / \eps} T_{-x_0} \gamma
        + M_{-n \pi e_1 / \eps} T_{-x_0} \gamma
      \big] ,
  \)
  where $e_1 = (1,0,\dots,0)$.
  Consequently, elementary properties of the Fourier transform show that
  \(
    \widehat{f_n}
    = \frac{1}{2}
      M_{- x_0}
      \big[
        T_{n \pi e_1 / \eps} M_{x_0} \widehat{\gamma}
        + T_{-n \pi e_1 / \eps} M_{x_0} \widehat{\gamma}
      \big] ,
  \)
  and hence
  \[
    |\widehat{f_n}(\xi)|
    \leq \frac{1}{2}
         \big(
           |\widehat{\gamma}(\xi - \tfrac{n \pi}{\eps} e_1)|
           + |\widehat{\gamma}(\xi + \tfrac{n \pi}{\eps} e_1)|
         \big)
    \qquad \text{for all } \xi \in \R^d .
  \]
  Since
  \(
    1 + |\xi|
    \leq 1 + |\xi \pm \frac{n \pi}{\eps} e_1| + \frac{n \pi}{\eps}
    \leq (1 + |\xi \pm \frac{n \pi}{\eps} e_1|) (1 + \frac{n \pi}{\eps})
    \leq \frac{2 n \pi}{\eps} (1 + |\xi \pm \frac{n \pi}{\eps} e_1|) ,
  \)
  this shows that
  \begin{equation}
  \begin{split}
    \| f_n \|_X
    & = \int_{\R^d}
          (1 + |\xi|)^{\beta} \cdot |\widehat{f_n}(\xi)|
        \, d \xi \\
    & \leq \frac{1}{2}
           \Big(
             \frac{2 \pi n}{\eps}
           \Big)^{\beta}
           \int_{\R^d}
             \sum_{\theta \in \{\pm 1\}}
               \bigl(1 + |\xi + \theta \tfrac{n \pi}{\eps} e_1|\bigr)^{\beta}
               \bigl|\widehat{\gamma} (\xi + \theta \tfrac{n \pi}{\eps} e_1)\bigr|
           \, d \xi \\
    & \leq \Big(
             \frac{2 \pi n}{\eps}
           \Big)^{\beta}
           \int_{\R^d}
             (1 + |\eta|)^{\beta} \, |\widehat{\gamma}(\eta)|
           \, d \eta
      \lesssim n^{\beta} .
  \end{split}
  \label{eq:FourierBarroNonInclusionNormEstimate}
  \end{equation}

  On the other hand, for $t \in (-2\eps, 2\eps)$ we see because of $\gamma \equiv 1$
  on $U_0$ that
  \[
    (\partial_1^2 f_n)(x_0 + t \, e_1)
    = \frac{d^2}{d t^2}
        f_n (x_0 + t \, e_1)
    = \frac{d^2}{d t^2}
        \cos \Big( \frac{n \pi}{\eps} t \Big)
    = - \Big( \frac{n \pi}{\eps} \Big)^2
        \cos \Big( \frac{n \pi}{\eps} t \Big)
  \]
  and hence
  \begin{align*}
    \int_{-2\eps}^{2 \eps}
      |(\partial_1^2 f_n)(x_0 + t \, e_1)|
    \, d t
    & = (n \pi / \eps)^2
        \int_{-2\eps}^{2 \eps}
          \bigl|\cos\bigl(n \pi t / \eps\bigr)\bigr|
        \, d t
      = \frac{n \pi}{\eps}
        \int_{2 \pi n}^{2 \pi n}
          |\cos(s)|
        \, d s \\
    & \overset{(\ast)}{=}
      \frac{4 \pi \, n^2}{\eps}
      \int_0^\pi
        |\cos(s)|
      \, d s
      = \frac{8 \pi n^2}{\eps} .
  \end{align*}
  Here, we used at $(\ast)$ that $s \mapsto |\cos(s)|$ is $\pi$-periodic and even.
  Combining the last calculation with
  \Cref{eq:FourierBarronReLUBarronAlmostContradiction,eq:FourierBarroNonInclusionNormEstimate},
  we arrive at
  \(
    n^2 \lesssim
    \int_{-2\eps}^{2 \eps}
      |(\partial_1^2 f_n)(x_0 + t \, e_1)|
    \, d t
    \lesssim \| f_n \|_X
    \lesssim n^{\beta} ,
  \)
  for all $n \in \N$.
  This is only possible if $\beta \geq 2$,
  and since $\beta = \max\{\alpha, 1\}$ this requires $\alpha \geq 2$.
\end{proof}

\appendix


\section{A bound for empirical processes with finite pseudo-dimension}
\label{sec:AppendixPseudoDimension}


In this section, we prove a ``uniform law of large numbers,'' similar to the pseudo-dimension based
generalization bound in \cite[Theorem~11.8]{MohriFoundationsOfMachineLearning},
which is used in the third part of the proof of Proposition~\ref{prop:BarronFunctionApproximation}.
The result given here is probably well-known; but since we could not locate a reference,
we provide the proof.
The main difference to the bound in \cite{MohriFoundationsOfMachineLearning} is that we
estimate the expected sampling error, instead of giving a high probability bound;
this allows us to omit a log factor.
Furthermore, we use a complexity measure of the hypothesis class that
differs slightly from the usual pseudo-dimension.

\begin{proposition}\label{prop:PseudoDimensionGeneralizationBound}
  There is a universal constant $\kappa > 0$ with the following property:
  If $(\Omega,\calF,\mu)$ is a probability space, if $a,b \in \R$ with $a < b$, and if
  $\emptyset \neq \calG \subset \{ g : \Omega \to [a,b] \colon g \text{ measurable} \}$
  satisfies
  \[
    d := \sup_{\lambda \in \R}
           \VC (\{ I_{g,\lambda} \colon g \in \calG \})
    < \infty,
    \qquad \text{where} \qquad
    I_{g,\lambda} : \quad
    \Omega \to \{ 0,1 \}, \quad
    \omega \mapsto \Indicator_{g(\omega) > \lambda} ,
  \]
  then for any $n \in \N$ and $S = (X_1,\dots,X_n) \overset{\text{i.i.d.}}{\sim} \mu$,
  we have
  \[
    \EE_{S}
    \bigg[
      \sup_{g \in \calG}
        \Big|
          \EE_{X \sim \mu} [ \, g(X) \,]
          - \frac{1}{n} \sum_{i=1}^n g(X_i)
        \Big|
    \bigg]
    \leq \kappa \cdot (b-a) \cdot \sqrt{\frac{d}{n}} .
  \]
\end{proposition}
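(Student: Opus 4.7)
The plan is to reduce this uniform law of large numbers for the real-valued class $\calG$ to a continuous family of uniform laws of large numbers for the binary threshold classes $\calF_t := \{ I_{g,t} \colon g \in \calG \}$. The key observation is the layer-cake representation: since every $g \in \calG$ takes values in $[a,b]$, we have
\[
  g(\omega) - a
  = \int_a^b \Indicator_{g(\omega) > t} \, d t
  \qquad \forall \, \omega \in \Omega ,
\]
and the same identity holds for the empirical and the true mean. Subtracting these two identities and bringing the absolute value inside the integral yields, for every $g \in \calG$,
\[
  \Big| \, \EE_{X \sim \mu}[g(X)] - \tfrac{1}{n} \sum_{i=1}^n g(X_i) \Big|
  \leq \int_a^b
         \Big| \, \EE_{X \sim \mu}[I_{g,t}(X)] - \tfrac{1}{n} \sum_{i=1}^n I_{g,t}(X_i) \Big|
       \, d t .
\]
Taking the supremum over $g \in \calG$ inside the integral on the right and applying Tonelli's theorem to the $S$-expectation gives
\[
  \EE_S \Big[ \sup_{g \in \calG} |\cdots| \Big]
  \leq \int_a^b
         \EE_S
         \Big[
           \sup_{f \in \calF_t}
             \big| \EE_\mu f - \tfrac{1}{n} \sum_i f(X_i) \big|
         \Big]
       \, d t.
\]

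For each fixed $t \in \R$, the class $\calF_t$ consists of $\{0,1\}$-valued functions with $\VC(\calF_t) \leq d$. The classical VC-based uniform law of large numbers (obtained from symmetrization, the Sauer--Shelah lemma, and either Massart's finite class lemma or Dudley's entropy integral applied to the Haussler covering bound) then yields an absolute constant $\kappa_0 > 0$ such that
\[
  \EE_S
  \Big[
    \sup_{f \in \calF_t}
      \big| \EE_\mu f - \tfrac{1}{n} \sum_i f(X_i) \big|
  \Big]
  \leq \kappa_0 \, \sqrt{d/n}
  \qquad \forall \, t \in \R .
\]
Crucially, this bound is uniform in $t$, so plugging it in and integrating over $t \in [a,b]$ gives
\[
  \EE_S \Big[ \sup_{g \in \calG} |\cdots| \Big]
  \leq \kappa_0 \, (b-a) \, \sqrt{d/n},
\]
which is the claim with $\kappa := \kappa_0$.

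The only genuine subtlety is measurability of the suprema appearing in the $S$-expectation, since $\calG$ need not be countable. This is a standard issue in empirical process theory, handled either by passing to outer expectations or by the separability reduction indicated in the footnote to \Cref{eq:PseudoDimensionBoundApplication} (replacing $\sup_{g \in \calG}$ by $\sup_{g \in \calG_0}$ for $\calG_0 \subset \calG$ finite, and taking a supremum over such $\calG_0$); in either formulation the bound above is unchanged. The layer-cake step and the Tonelli exchange are routine because the integrand is non-negative.
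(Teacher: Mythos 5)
Your proposal is correct and follows essentially the same route as the paper's proof: a layer-cake reduction to the threshold classes $\{I_{g,\lambda}\}$, exchanging supremum and integral via Tonelli, and then invoking the standard VC-based bound on the expected supremum of an empirical process (the paper cites \cite[Theorem~8.3.23]{VershyninHighDimensionalProbability} for exactly the step you derive via symmetrization and chaining). The measurability caveat you raise is handled in the paper in the same way, by interpreting the outer supremum over finite subclasses.
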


\begin{rem*}
  Here, as in most sources studying empirical processes
  (see e.g.~\mbox{\cite[Section~7.2]{VershyninHighDimensionalProbability}}),
  we interpret $\EE [\sup_{i \in I} X_i ]$ as
  $\sup_{I_0 \subset I \text{ finite}} \EE [\sup_{i \in I_0} X_i]$,
  in order to avoid measurability issues.
\end{rem*}

\begin{proof}
  Given a sample $S = (X_1,\dots,X_n) \in \Omega^n$,
  we write $\mu_S := \frac{1}{n} \sum_{i=1}^n \delta_{X_i}$ for the associated empirical measure.
  We want to bound
  \[
    \EE
    \Bigl[\,
      \sup_{g \in \calG}
        \big|
          \EE_{X \sim \mu} [g(X)] - \EE_{X \sim \mu_S} [g(X)]
        \big|
    \,\Bigr]
    ,
  \]
  where the outer expectation is with respect to
  $S = (X_1,\dots,X_n) \overset{\text{i.i.d.}}{\sim} \mu$.
  First, by replacing $\calG$ with $\calG^\ast := \{ g - a \colon g \in \calG \}$,
  it is easy to see that we can assume $a = 0$ without loss of generality.
  Define $M := b = b - a$.
  Then, for any $g \in \calG$ and any probability measure $\nu$ on $\Omega$,
  the \emph{layer cake formula} (see e.g.~\cite[Proposition~6.24]{FollandRA}) shows
  \[
    \EE_{X \sim \nu} [g(X)]
    = \int_0^M
        \nu ( \{ \omega \in \Omega : g(\omega) > \lambda \})
      \, d \lambda
    = \int_0^M
        \EE_{X \sim \nu} [I_{g,\lambda} (X)]
      \, d \lambda .
  \]
  Therefore,
  \begin{align*}
    \Big| \EE_{X \sim \mu} [g(X)] - \EE_{X \sim \mu_S} [g(X)] \Big|
    & = \bigg|
          \int_0^M
            \EE_{X \sim \mu} \bigl[I_{g,\lambda} (X)\bigr]
            - \EE_{X \sim \mu_S} \bigl[I_{g,\lambda} (X)\bigr]
          \, d \lambda
        \bigg| \\
    & \leq \int_0^M
               \Big|
                 \EE_{X \sim \mu} \bigl[I_{g,\lambda} (X)\bigr]
                 - \EE_{X \sim \mu_S} \bigl[I_{g,\lambda} (X)\bigr]
               \Big|
             \, d \lambda .
  \end{align*}
  In combination with the elementary estimate
  \(
    \sup_{g \in \calG}
      \int_0^M
        \Gamma_g(\lambda)
      \, d \lambda
    \leq \int_0^M
           \sup_{g \in \calG}
             \Gamma_g(\lambda)
         \, d \lambda
  \)
  and Tonelli's theorem, this implies
  \begin{align*}
    \EE
    \Big[
      \sup_{g \in \calG}
        \big|
          \EE_{X \sim \mu} [g (X)] - \EE_{X \sim \mu_S} [g(X)]
        \big|
    \Big]
    & \leq \EE
           \Big[
             \sup_{g \in \calG}
               \int_0^M
                 \Big|
                   \EE_{X \sim \mu} \bigl[I_{g,\lambda} (X)\bigr]
                   - \EE_{X \sim \mu_S} \bigl[I_{g,\lambda} (X)\bigr]
                 \Big|
               \, d \lambda
           \Big] \\
    & \leq \int_0^M
           \EE
           \Big[
             \sup_{g \in \calG}
               \Big|
                 \EE_{X \sim \mu} \bigl[I_{g,\lambda} (X)\bigr]
                 - \EE_{X \sim \mu_S} \bigl[I_{g,\lambda} (X)\bigr]
               \Big|
           \Big]
           \, d \lambda \\
    & \overset{(\ast)}{\leq}
           \int_0^M
             \kappa \cdot \sqrt{\frac{\VC(\{ I_{g,\lambda} \colon g \in \calG \})}{n}}
           \, d \lambda
      \leq \kappa \cdot M \cdot \sqrt{\frac{d}{n}} .
  \end{align*}
  Here, the step marked with $(\ast)$ is an immediate consequence of the bound for the suprema
  of empirical processes based on the VC dimension given in
  \cite[Theorem~8.3.23]{VershyninHighDimensionalProbability}.
\end{proof}


\section{A technical bound involving the total variation}%
\label{sec:TotalVariationTechnical}


\begin{proof}[Proof of Lemma~\ref{lem:TechnicalBVBound}]
  \textbf{Step 1:} We first show that if $h : \R \to \R$ is non-decreasing and bounded, then
  \(
    |\int_{\R} \varphi'(t) \, h(t) \, d t|
    \leq \| \varphi \|_{\sup} \cdot \lim_{x \to \infty} [h(x) - h(-x)]
  \)
  for every $\varphi \in C_c^\infty(\R)$.
  To see this, define $c := \lim_{x \to -\infty} h(x)$ and
  $\widetilde{h} : \R \to \R, x \mapsto \lim_{y \downarrow x} h(y) - c$.
  It is straightforward to see that $\widetilde{h}$ is non-decreasing, bounded, and right-continuous
  with $\lim_{x \to -\infty} \widetilde{h}(x) = 0$, so that $\widetilde{h} \in \mathrm{NBV}$
  in the notation of \cite[Section~3.5]{FollandRA}.
  Furthermore, since a monotonic function can have at most countably many
  discontinuities (see \cite[Theorem~3.23]{FollandRA}), we have $\widetilde{h} = h - c$
  on the complement of a countable set, and hence almost everywhere.
  Since we also have $\int_{\R} \varphi'(t) \, d t = 0$ thanks to the compact support of $\varphi$,
  if we denote by $\mu$ the unique Borel measure on $\R$ satisfying
  $\widetilde{h}(x) = \mu \big( (-\infty, x] \big)$ for all $x \in \R$,
  then the partial integration formula in \cite[Theorem~3.36]{FollandRA} shows
  as claimed that
  \begin{align*}
    \Big| \int_{\R} \varphi'(t) \, h(t) \, d t \Big|
    \!=\! \Big| \int_{\R} \varphi'(t) \, [h(t) - c] \, d t \Big|
    & = \Big| \int_{\R} \varphi'(t) \, \widetilde{h}(t) \, d t \Big|
      \!=\! \Big| \int_{\R} \widetilde{h}(t) \, d \varphi(t) \Big|
      \!=\! \Big| \int_{\R} \varphi(t) \, d \mu(t) \Big| \\
    & \leq \| \varphi \|_{\sup} \cdot \mu(\R)
      = \| \varphi \|_{\sup}
        \cdot \lim_{x \to \infty}
              \big[\, \widetilde{h} (x) - \widetilde{h} (-x) \,\big] \\
    & = \| \varphi \|_{\sup} \cdot \lim_{x \to \infty} \bigl[ h (x) - h (-x) \bigr] .
  \end{align*}

  \noindent
  \textbf{Step 2:} Define
  \[
    T_g : \,\,
    \R \to \R, \,\,
    x \mapsto \sup
              \Big\{
                \sum_{j=1}^n |g(x_j) - g(x_{j-1})|
                \quad \colon \quad
                n \in \N \text{ and } -\infty < x_0 < \dots < x_n = x
              \Big\} .
  \]
  Then $T_g$ is non-decreasing and satisfies $\lim_{x\to-\infty} T_g (x) = 0$
  and $\lim_{x\to\infty} T_g (x) = \mathrm{TV}(g)$; furthermore, $g_1 := \frac{1}{2} (T_g + g)$
  and $g_2 := \frac{1}{2} (T_g - g)$ are both non-decreasing and bounded with $g = g_1 - g_2$;
  all of these properties can be found in \cite[Section~3.5]{FollandRA}.
  Note that
  \({
    \lim_{x \to \infty} [\, g_i (x) - g_i(-x) \,]
    = \frac{1}{2} \mathrm{TV}(g)
      + \frac{(-1)^{i-1}}{2}
        \lim_{x \to \infty}
          [g(x) - g(-x)] .
  }\)
  In combination with the estimate from Step~1 (applied to $h = g_i$),
  this implies as claimed that
  \[
    \Big| \int_{\R} \! \varphi'(t) \, g(t) \, d t \Big|
    \!\leq\! \sum_{i=1}^2
               \Big| \int_{\R} \! \varphi'(t) \, g_i(t) \, d t \Big|
    \!\leq\! \| \varphi \|_{\sup}
             \sum_{i=1}^2
               \lim_{x \to \infty}
                 \bigl[\, g_i (x) - g_i(-x) \,\bigr]
    \!=\!    \| \varphi \|_{\sup} \, \mathrm{TV} (g) .
    \qedhere
  \]
  %
  %
\end{proof}

\bibliographystyle{abbrvurl}

{\footnotesize
\bibliography{references}
}

\end{document}